\documentclass[compress,final,3p,times,12.8pt]{elsarticle}
\usepackage{mathrsfs}
\usepackage{amsfonts}

\usepackage{graphics}
 \usepackage{color}
 \usepackage{array}
\usepackage{amssymb}
\usepackage{amsmath}
\usepackage{diagbox}
\usepackage{subfigure}
\usepackage{multirow}

\newtheorem{thm}{Theorem}
\newtheorem{Assu}{Assumption}

\newtheorem{lem}{Lemma}
\newtheorem{rem}[thm]{Remark}

\numberwithin{equation}{section}
 \numberwithin{lem}{section}
 \numberwithin{Defi}{section}
 \numberwithin{thm}{section}
 \numberwithin{Rem}{section}
  \numberwithin{Coro}{section}
  \numberwithin{Fig}{section}

%\journal{}

\begin{document}

\begin{frontmatter}

\title{Data-driven method to learn the most probable transition pathway and stochastic differential equation} \tnotetext[label1]{This work is
 supported by NSFC (Grant Nos. 11771162 and 12141107)}

\author{Xiaoli Chen\fnref{addr4}}
\ead{xlchen@nus.edu.sg}
\author{Jinqiao Duan\fnref{addr3}}
\ead{duan@iit.edu}
\author{Jianyu Hu\corref{cor1}\fnref{addr1}}
\ead{jianyuhu@hust.edu.cn}
\author{Dongfang Li\fnref{addr1,addr2}}
\ead{hustldf@gmail.com}\cortext[cor1]{Corresponding author}

% \author{Jianyu Hu\fnref{addr1}}
% \ead{jianyuhu@hust.edu.cn}
% \author{Xiaoli Chen\corref{cor1}\fnref{addr2}}
% \ead{xlchen@nus.edu.sg}
% \author{Dongfang Li\fnref{addr1,addr3}}
% \ead{hustldf@gmail.com}
% \author{Jinqiao Duan\fnref{addr4}}
% \ead{duan@iit.edu}
% \cortext[cor1]{Corresponding author}

\address[addr1]{School of Mathematics and Statistics, Huazhong University of Science and Technology, Wuhan 430074,  China}

\address[addr2]{Department of Mathematics, National University of Singapore, 119077, Singapore}

\address[addr3]{Hubei Key Laboratory of Engineering Modeling and Scientific Computing, Huazhong University of Science and Technology, Wuhan 430074, China}
\address[addr4]{Department of Applied Mathematics, College of Computing, Illinois Institute of Technology, Chicago, IL 60616, USA}

\date{\today}

\begin{abstract}
{Transition phenomena between metastable states play an important role in complex systems due to noisy fluctuations. 
We introduce the Onsager-Machlup theory and the Freidlin-Wentzell theory to quantify rare events in stochastic differential equations. By the variational principle, the most probable transition pathway is the minimizer of the action functional, which is governed by the Euler-Lagrange equation. 
In this paper, the physics-informed neural networks (PINNs) are presented to compute the most probable transition pathway through computing the Euler-Lagrange equation. 
%It is shown that the expected loss is bounded by the empirical loss.
The convergence result for the empirical loss is obtained for the forward problem. 
Then we investigate the inverse problem to extract the stochastic differential equation from the most probable transition pathway data. 
Finally, several numerical experiments are presented to verify the effectiveness of our methods.
 }
\end{abstract}
 
\begin{keyword}
{physics-informed neural networks; most probable transition pathway; Euler-Lagrange equation; Markovian bridge process; inverse problem}
\end{keyword}

\end{frontmatter}

\section{Introduction}
\textcolor{red}{Stochastic dynamical systems are used to model complicated phenomena} in the physical, chemical, and biological sciences\cite{arnold2013random,duan2015introduction,duan2014effective,imkeller2012stochastic}. Noisy fluctuations result in transitions between metastable states \cite{budhiraja2019analysis,ditlevsen1999observation}, which are impossible in deterministic systems. 
% \jy{It is important to study the transitions. }
The Onsager-Machlup theory and the Freidlin-Wentzell theory are two essential tools to study the rare events of stochastic dynamical systems, which have been studied with a rich history; see for examples \cite{capitaine2000onsager,freidlin2012random,ikeda2014stochastic,varadhan1984large}.
They induce an action functional to quantify the probability of solution paths on a small tube and provide information about system transitions.
The minimum value of the action functional means the largest probability of the path tube, and the minimizer is the most probable transition pathway that is governed by the Euler-Lagrange equation.
Comparing to samples of stochastic differential equations, the transition paths contain more important information, which makes them effective for extracting the stochastic differential equation. In this way, the Onsager-Machlup theory and the Freidlin-Wentzell theory provide a new perspective on inferring the stochastic differential equation.

% There exist some numerical methods for finding the most probable transition pathway. 
There are some numerical methods for calculating the most probable transition pathway. In the Onsager-Machlup framework, D\"{u}rr and Bach \cite{durr1978onsager} regarded the action functional as an integral of a Lagrangian for determining the most probable transition pathway of a diffusion process by the variational principle. Chao and Duan \cite{chao2019onsager} investigated the most probable transition pathway for scalar stochastic dynamical systems with L\'{e}vy noise by the Euler-Lagrange equation and shooting method. Hu and Chen \cite{hu2021transition} used a neural shooting method to compute the most probable transition pathway. 
% Huang et.al \cite{huang2021most} reduced a second order Euler-Lagrange equation to first order ordinary differential equation through the Markovian bridge process to obtain the most probable transition pathway. 
For the small noise case, the Freidlin-Wentzell theory of large deviations provides a framework to understand the dynamics of transition phenomena and the description of the effect of small random perturbations \cite{freidlin2012random,varadhan1984large}. 
%Similar to Onsager-Machlup action functional, the Freidlin-Wentzell action functional has been used to compute the most probable transition pathway, 
Numerical methods have been developed to compute the most probable transition pathway, 
including the string method \cite{weinan2002string,weinan2003energy}, the minimum action method \cite{weinan2004minimum,wan2018convergence,wan2021minimum}, the adaptive minimum action method \cite{wan2018hp,zhou2008adaptive} and the geometric minimum action method \cite{Heymann2008TheGM}. 
In high dimensions, computing will become very complex and challenging.

It is remarkable that our viewpoint sheds some new light on recovering stochastic differential equations from transition paths. In the previous work, the researchers used the solution samples.
Opper et.al. \cite{batz2016variational} recovered the drift function through the variational estimation method.
In \cite{boninsegna2018sparse,ren2020identifying}, the authors used sparse learning to extract the stochastic differential equation.
Chen et.al. \cite{chen2021solving} learned the stochastic differential equation from discrete particle samples at different time snapshots using the Fokker-Planck equation and neural network. In \cite{yang2020generative}, Yang et.al. used samples from a few snapshots of unpaired data to infer stochastic differential equations. 
Dietrich et.al. \cite{dietrich2021learning} 
proposed a maximal likelihood estimation to learn the stochastic differential equation with trajectories observation data. 
These methods require a lot of data about the solution samples.

Benefiting from the recent development of machine learning, the physics-informed neural networks (PINNs) method \cite{raissi2019physics} is successfully applied to solve differential equations and inverse problems \cite{chen2021learning,Lou2021PhysicsinformedNN,lu2021deepxde,raissi2020hidden}. Shin et.al. \cite{shin2020convergence} proved the convergence of the PINNs method to solve the linear second-order elliptic and parabolic type partial differential equations. Combining with the PINNs method, we use the Onsager-Machlup theory and the Freidlin-Wentzell theory to compute the most probable transition pathway and infer the stochastic differential equation. 
% For the forward problem, we use the PINNs method to compute the most probable transition pathway by solving the Euler-Lagrange equation.
In contrast to the PINNs method for solving partial differential equations \cite{shin2020convergence}, we prove a convergence result of the PINNs method to solve the nonlinear Euler-Lagrange equation with initial and final states. 
Under the probabilistic space filling arguments, we bound the expected PINNs loss in terms of the empirical PINNs loss. We also show the convergence result for the empirical loss, which provides a theoretical support for computing the most probable transition pathway. 

For the inverse problem, we use the neural network to extract the drift function of the stochastic differential equation in both the Onsager-Machlup framework and the Freidlin-Wentzell framework from the observation data. The observation data are transition paths of the stochastic differential equation, sampled by the corresponding Markovian bridge process \cite{delarue2017ab,orland2011generating} in one dimension. And in high dimensions, the transition paths can not be easily simulated, as we know. Instead, we use the most transition pathways computed by the PINNs method with Gaussian noise perturbations as the observation data. As a comparison, we also extract the drift function from the observation data computed by the PINNs method.

In this paper, we present a new viewpoint to compute the most transition pathways and extract stochastic differential equations. It is convenient to calculate the most transition pathways using the PINNs method. 
% We propose a convergence result of the PINNs method of the nonlinear Euler-Lagrange equation. 
\textcolor{red}{We demonstrate the convergence of the PINNs method for solving the nonlinear Euler-Lagrange equation.}
It is shown that the expected loss is bound by the empirical loss. To extract the stochastic differential equation, both parametric and non-parametric methods are presented from the transition paths data rather than solution samples.
Our method needs less data to infer the stochastic differential equation, comparing with existing methods \cite{batz2016variational,boninsegna2018sparse,chen2021solving}.

The remainder of this paper is structured as follows. In Section 2, we introduce the Onsager-Machlup theory and the Freidlin-Wentzell theory to quantify the rare events of stochastic dynamical systems. Then, by the variational principle, we apply the corresponding Euler-Lagrange equation to compute the most probable transition pathway and introduce the Markovian bridge process to sample such rare events. In Section 3, we prove a convergence result of the PINNs method to solve the Euler-Lagrange equation. Moreover, we present both parametric and non-parametric methods to extract the drift function through neural networks. Some numerical experiments are presented in Section 4, followed by Section 5 with a conclusion.

\section{Mathematical setup}
Consider the following stochastic differential equation
\begin{equation}
\begin{split}\label{SDE_model}
dX(t)=f(X(t))dt+\varepsilon\sigma dW(t),
\end{split}
\end{equation}
for $t\in[0,T]$, with initial data $X(0)=x_0 \in \mathbb{R}^d$, where $f:\mathbb{R}^d\rightarrow\mathbb{R}^d$ is a drift function, $\sigma$ is a $d\times k$ matrix, $W$ is a Brownian motion in $\mathbb{R}^k$, and $\varepsilon$ is a positive parameter. Here, $\varepsilon$ is considered since we will apply the Freidlin-Wentzell  large deviation theory. The well-posedness of the stochastic differential equation \eqref{SDE_model} has been widely investigated \cite{karatzas2012brownian}. If the drift and diffusion coefficients are locally Lipschitz and satisfy the ``one-sided linear growth condition," the global solution of \eqref{SDE_model} is unique. 

We are interested in the transition phenomena between two metastable states. The metastable states are the stable equilibrium states of the corresponding deterministic system. Noisy fluctuations result in transitions between metastable states, which are impossible in the corresponding deterministic system. The key is to estimate the probability of solution pathways from equation \eqref{SDE_model} in a small tube in path space. The Onsager-Machlup action functional theory and large deviation theory quantify this probability as a functional in the path space. Thus, we could minimize the functional to find the most probable transition pathway. We compute the most probable transition pathway and learn the drift coefficient from the observation data for stochastic dynamical systems using machine learning methods.

\subsection{Onsager-Machlup action functional and Freidlin-Wentzell action functional} 
To investigate transition phenomena, one should estimate the probability of the solution paths in a small tube. 
When the parameter $\varepsilon$ is a positive constant, we could use the Onsager-Machlup action functional to quantify these transition paths on a small tube of a given function.
\textcolor{red}{While for the asymptotically small parameter $\varepsilon$, the large deviation theory works well, yielding the Freidlin-Wentzell action functional.}

The Onsager-Machlup theory \cite{durr1978onsager} of stochastic dynamical system \eqref{SDE_model} gives an approximation of the probability 
\begin{equation}
\begin{split}\label{propom}
\mathbb{P}(\{\|X-z\|_T \leqslant \delta\}) \propto C(\delta,T) \exp \left\{-S^{OM}(z,\dot{z})\right\},
\end{split}
\end{equation}
where $\delta$ is positive and sufficiently small, $z$ is in the space of continuous functions in the time interval $[0,T]$ with supremum norm $\|\cdot\|_T$, and the Onsager-Machlup action functional $S^{OM}$ is defined as
\begin{equation}
\begin{split}\label{om}
S^{OM}(z,\dot{z})=\frac{1}{2}\int_0^T\left[\dot{z}-f(z)\right][\varepsilon^2\sigma\sigma^T]^{-1}[\dot{z}-f(z)]+\nabla \cdot f(z)dt.
\end{split}
\end{equation}

\textcolor{red}{For positive and sufficiently small $\delta$ and $\varepsilon$, the Freidlin-Wentzell theory \cite{freidlin2012random} of large deviations asserts}
\begin{equation}
\begin{split}\label{propfw}
\mathbb{P}(\{\|X-z\|_T \leqslant \delta\}) \propto C(\delta,T) \exp \left\{-\varepsilon^{-2}S^{FW}(z,\dot{z})\right\},
\end{split}
\end{equation}
where the Freidlin-Wentzell action functional $S^{FW}$ is defined as
\begin{equation}
\begin{split}\label{fw}
S^{FW}(z,\dot{z})=\frac{1}{2}\int_0^T\left[\dot{z}-f(z)\right][\sigma\sigma^T]^{-1}[\dot{z}-f(z)]dt.
\end{split}
\end{equation}

The most probable transition pathway connecting two metastable states is the minimizer of the Onsager-Machlup action functional or the Freidlin-Wentzell action functional. Both action functionals are successfully applied to find the most probable transition pathway for stochastic dynamical systems. The Onsager-Machlup action functional seems like a more precise approximation, since it has an additional divergence term $\nabla\cdot f$. \textcolor{red}{For sufficiently small $\varepsilon$}, the Onsager-Machlup action functional $S^{OM}$ roughly approaches to the Freidlin-Wentzell action functional $\varepsilon^{-2}S^{FW}$. But rigorously speaking, they are different, even when $\varepsilon$ tends to zero. Under a renormalization, they coincide \cite{li2021gamma}.

\subsection{Euler-Lagrange equation and the most probable transition pathway}
For simplicity, we write the action functional as the integral of a Lagrangian function 
\begin{equation}
\begin{split}\label{Lagrangian}
S(z,\dot{z})=\frac{1}{2}\int_0^TL(z,\dot{z})dt,
\end{split}
\end{equation}
where the Lagrangian $L(\cdot,\cdot)$ could be the integrand of the Onsager-Machlup action functional $S^{OM}$ or the Freidlin-Wentzell  action functional $S^{FW}$. The minimum value of the action functional \eqref{Lagrangian} means the largest probability of path tube in \eqref{propom} or \eqref{propfw}. Therefore, by minimizing the action functional \eqref{Lagrangian}, we could find the most probable transition pathway of the stochastic differential equation \eqref{SDE_model}. We emphasize that the most probable transition pathway is not required as the sample path for the stochastic system. However, it captures sample paths with the largest probability around its neighborhood.

% To this end, we restrict the transition pathways to twice differentiable functions with fixed initial and final states. 
\textcolor{red}{To this end, we limit the transition paths to functions that are twice differentiable and have fixed initial and terminal states.}
Then, by the variational principle, the most probable transition pathway of the stochastic differential equation \eqref{SDE_model} satisfies the Euler-Lagrange equation
\begin{equation}
\begin{split}\label{EL_eqn}
\frac{d}{dt}\frac{\partial}{\partial \dot{z}}L(z,\dot{z})=\frac{\partial}{\partial z}L(z,\dot{z}),
\end{split}
\end{equation}
with initial and final conditions $z(0)=x_0,~z(T)=x_T$. Here, $x_0$ and $x_T$ are often chosen as two metastable states. The existence and regularity of the minimum for the action functional \eqref{Lagrangian} are studied in \cite{chao2019onsager,freidlin2012random,hu2021transition}.

If the diffusion is a diagonal constant matrix, i.e., $\sigma=\operatorname{diag}\{\sigma_{11},...,\sigma_{dd}\}$, the Euler-Lagrange equation corresponding to the Onsager-Machlup action functional \eqref{om} reduces to
\begin{equation}
\begin{split}\label{el_om}
\ddot{z}_k=\dot{z}^j[\partial_jf^k(z)-\frac{\sigma_{kk}^2}{\sigma_{jj}^2}\partial_kf^j(z)]+\frac{\sigma_{kk}^2}{\sigma_{jj}^2}f^j(z)\partial_kf^j(z)+\frac{\varepsilon^2\sigma_{kk}^2}{2}\partial_k\partial_jf^j(z),
\end{split}
\end{equation}
for $k=1,...,d$. In addition, if $\sigma$ is an identity matrix, the Euler-Lagrange equation corresponding to the Freidlin-Wentzell action functional \eqref{fw} reduces to
\begin{equation}
\begin{split}\label{el_fw}
\ddot{z}_k=\dot{z}^j[\partial_jf^k(z)-\partial_kf^j(z)]+f^j(z)\partial_kf^j(z),
\end{split}
\end{equation}
for $k=1,...,d$. Here, we use the Einstein sum for $j$. The detailed deviation of equations \eqref{el_om} and \eqref{el_fw} is seen in Appendix B.
%The detailed deviation of equations \eqref{el_om} and \eqref{el_fw} is seen in Appendix B.

\subsection{The sampling of transition paths}
A transition path is a sample path of the stochastic dynamical system \eqref{SDE_model} \textcolor{red}{starting at an initial point $x_0$ and ending at the final point $x_T$}. The transition state theory offers a framework for sampling such rare events \cite{weinan2006towards,lu2015reactive}. By means of the Doob transformation \cite{doob1957conditional}, the stochastic differential equation \eqref{SDE_model}, starting at $x_0$ and ending at $x_T$, has the same law as a Markovian bridge process \cite{ccetin2016markov} governed by 
\begin{equation}
\begin{split}\label{MarBri}
dY(t)=\left[f(Y(t))+A\frac{\nabla p(x_T,T \mid Y(t),t)}{p(x_T,T \mid Y(t),t)}\right]dt+\varepsilon\sigma d\hat{W}(t),
\end{split}
\end{equation}
where the covariance matrix $A = \varepsilon^2\sigma\sigma^T$, $\hat{W}$ is a Brownian motion in $\mathbb{R}^d$ and the transition probability density function $p(x_T,T \mid x,t)$ satisfies the Kolmogorov backward equation
\begin{equation}
\frac{\partial p\left(x_{T}, T \mid x, t\right)}{\partial t}=-f(x) \cdot \nabla p\left(x_{T}, T \mid x, t\right)-\frac{1}{2} A \triangle p\left(x_{T}, T \mid x, t\right).
\end{equation}
We remark that, with probability one, the samples of this Markovian bridge process \eqref{MarBri} will move from initial point $x_0$ to final point $x_T$ when the time $t$ goes to $T$.

Similar to the sample with the forward committor function \cite{khoo2019solving,lu2015reactive,li2019computing}, we could not characterize the transition probability density function $p(x_T,T \mid x,t)$ analytically in general. Moreover, computing the transition probability density function highly needs the information from \eqref{SDE_model}, since we want to learn the drift function from the observations of transition paths. An alternative way to deal with this problem is to make approximations. To this end, we take $d=k$ and the diffusion coefficient as the identity matrix in \eqref{SDE_model}, i.e. $\sigma=I$.

\textcolor{red}{When the transition time $T$ is short}, the Markovian bridge process \eqref{MarBri} can be approximated \cite{orland2011generating} by the following stochastic differential equation
\begin{equation}\label{MarBri1}
d Z(t)=\left[\frac{x_{T}-Z(t)}{T-t}-\frac{T-t}{4} \nabla g(Z(t))\right]dt +  \varepsilon d \hat{W}_{t},\  t\in [0,T),
\end{equation}
where $g(x)=|f(x)|^2-\varepsilon\nabla\cdot f(x)$. Using this approximation, we can sample the transition paths for \eqref{SDE_model} under the Onsager-Machlup framework.

\textcolor{red}{When $\varepsilon$ tends to zero}, the Markovian bridge process \eqref{MarBri} in one dimension can be approximated \cite{delarue2017ab} by the following stochastic differential equation
\begin{equation}\label{MarBri2}
d Z(t)\!\!=\!\!\left[\frac{x_{T}\!-\!Z(t)}{T\!-\!t}\!-\!\frac{T-t}{2} \int_{0}^{1}(1\!-\!u) \frac{d}{d x}\left[\left(f(x)\right)^{2}\right]\left(x_{T} u\!+\!Z(1-u)\right) d u\right]dt \!+\! \varepsilon d\hat{W}(t),
\end{equation}
for $t\in[0,T)$. Using this approximation, we can sample the transition paths for \eqref{SDE_model} under the Freidlin-Wentzell framework.

These transition paths can be considered as the simulation of the original stochastic differential equations \eqref{SDE_model}. The most probable transition pathways of stochastic differential equations \eqref{SDE_model} and \eqref{MarBri} coincide \cite{huang2021most}. In the next section, we will use the expectation of transition paths as the most probable transition pathway observation data to extract the drift function. 

\subsection{Neural Networks}
Let $h^{L}: \mathbb{R}^{n_0} \rightarrow \mathbb{R}^{n_L}$ be a fully connected neural network with depth $ L$ and width $n_{j}$ in the $j$-th layer. In the $j$-th layer, denote the weight matrix $w^{j} \in \mathbb{R}^{n_{j} \times n_{j-1}}$ and the bias vector $b^{j} \in \mathbb{R}^{n_{j}}$. Let $\theta_{L}:=\left\{w^{j}, b^{j}\right\}_{1 \leq j \leq L}$. Given an activation function $\sigma(\cdot)$, the neural network is defined by
\begin{equation} \label{eqn11}%%\label
\begin{aligned}
h^{j}(\mathbf{x}) & =\left\{
             \begin{array}{lr}
          w^{j} \sigma\left(h^{j-1}(\mathbf{x})\right)+b^{j} , \quad \text { for } \quad 2 \leq j \leq L, &  \\

            w^{1} \mathbf{x}+b^{1},\ \  \quad \quad \quad \quad \ \ \text { for } \quad j = 1. &  \\
             \end{array}
\right.
\end{aligned}
\end{equation}
The input is $\mathbf{x} \in \mathbb{R}^{n_{0}}$, and the output of the $j$-th layer is $h^{j}(\mathbf{x}) \in \mathbb{R}^{n_{j}}$.
Popular choices of activation functions include the sigmoid $\left(1 /\left(1+e^{-x}\right)\right)$, the hyperbolic tangent $(\tanh (x))$, and the rectified linear unit $(\max \{x, 0\})$. Note that $h^{L}$ is called a $(L-1)$ hidden layer neural network or a $L$-layer neural network.

Since neural network $h^{L}(\mathbf{x})$ depends on the parameters $\theta_{L}$, we often denote $h^{L}(\mathbf{x})$ by $h^{L}\left(\mathbf{x} ; \theta_{L}\right) .$  Given a neural network structure, we define a neural network function class
\begin{equation}\label{NeuNet}
\mathcal{H}=\left\{h^{L}\left(\cdot ; \theta_{L}\right): \mathbb{R}^{n_0} \mapsto \mathbb{R}^{n_L} \mid \theta_{L}=\left\{\left(w^{j}, b^{j}\right)\right\}_{j=1}^{L}\right\}.
\end{equation}

\section{Methodology}
Under the Onsager-Machlup framework and Freidlin-Wentzell framework, the minimizer of the action functional means the largest probability of the occurrence of a transition. We use neural networks to compute the most probable transition pathway by solving the Euler-Lagrange equation and compare it with the expected transition paths simulated by the Markovian bridge process. Then, we consider the inverse problem to learn the drift function $f$ and most probable transition pathway from the observation data.  

\subsection{Forward problem of the Euler-Lagrange equation}
Consider the following Euler-Lagrangian equation:
\begin{equation}
\begin{split}\label{EL_Forward}
\ddot{z}=g(z,\dot{z}),
\end{split}
\end{equation}
with initial point $z(0)=x_0$ and final point $z(T)=x_T$, where $g$ is related to the drift function $f$, and the diffusion function $\sigma$ of the stochastic dynamical system \eqref{SDE_model}. Here, equation \eqref{EL_Forward} could be the Euler-Lagrangian equation in both the Onsager-Machlup framework \eqref{el_om} and the Freidlin-Wentzell framework \eqref{el_fw}. 

We recall the notations for the function spaces. Denote $U=(0,T)$. For a positive integer $k$, let $C^{k}(U)$ be the set of vector-valued functions whose derivatives of order $\leq k$ are continuous in $U$. Denote $C^k(\overline{U})$ be the set of functions in $C^k(U)$ whose derivatives of order $\leq k$ have continuous extensions to $U$. A function $f$ is H\"{o}lder continuous with exponent $\alpha\ (0<\alpha\leq 1)$ in $U$, if 
\begin{equation}
[f]_{\alpha ; U}=\sup _{x, y \in U, x \neq y} \frac{\|f(x)-f(y)\|}{|x-y|^{\alpha}}<\infty,
\end{equation}
where $\|\cdot\|$ is the Euclidean norm in $\mathbb{R}^d$. Note that if $\alpha=1$, $f$ is a Lipschitz vector-valued function.

For positive integer $k$ and exponent $\alpha$ with $0<\alpha\leq 1$, we define the H\"{o}lder space
\begin{equation}
C^{k,\alpha}(\overline{U})=\{f\in C^{k}(\overline{U})|f^{(k)} \text{ is  H\"{o}lder continuous with exponent $\alpha$}\},
\end{equation}
where $f^{(k)}$ is the derivative of order $k$ for $f$. Equipping with the norm 
\begin{equation}
\|f\|_{k,\alpha}=\sum\limits_{j=1}^k\sup\limits_{x\in U}\|f^{(j)}(x)\|+[f^{(k)}]_{\alpha ; U},
\end{equation}
the H\"{o}der space $C^{k,\alpha}(\overline{U})$ is a Banach space. Denote $[f]_{k,\alpha ; U}=[f^{(k)}]_{\alpha ; U}$ and $[f]_{0,\alpha ; U}=[f]_{\alpha ; U}$.

We will consider the solution of the Euler-Lagrange equation \eqref{EL_Forward} in the H\"{o}lder spaces $C^{k,\alpha}([0,T])$ and use the PINNs method \cite{raissi2019physics,shin2020convergence} to compute the most probable transition pathway. Comparing to solving the partial differential equations, the solution of the Euler-Lagrange equation is a vector-valued function in a time interval $[0,T]$. They proved convergence results for linear partial differential equations. In our case, the convergence results for nonlinear differential equations are proved.

We approximate the solution to the Euler-Lagrange equation \eqref{EL_Forward} from a set of training data, which consist of residual data, initial and final data. Denote the residual data as $\{(t_j,[\ddot{z}-g(\dot{z},z)](t_j))\}_{j=1}^m$, where $t_j\in U=(0,T)$, and the initial and final data $\{(0,x_0),(T,x_T)\}$. We denote the residual input data points as $\mathcal{T}^m=\{t_j\}_{j=1}^m$, where $m$ represents the number of residual training data points in $(0,T)$.

Given a class of neural networks $\mathcal{H}$ defined as \eqref{NeuNet}, we seek to find a neural network $h^{*}$ in $\mathcal{H}$ that minimizes an objective (loss) function. Let $h\in \mathcal{H}$. We consider the expected PINNs loss
\begin{equation}
\begin{split}
\operatorname{Loss}^{\operatorname{PINN}}\left(h,\lambda\right)\! = \!\lambda_r\|[\ddot{h}-g(\dot{h},h)\|_{L^2(U;\mu)}^2\!+\! \frac{\lambda_b}{2}(\|h(0)\!-\!x_0\|^2+\|h(T)\!-\!x_T\|^2),
\end{split}
\end{equation}
where $\|\cdot\|$ is the Euclidean norm in $\mathbb{R}^d$, $\lambda_r$ and $\lambda_b$ are positive constants with $\lambda=(\lambda_r,\lambda_b)$, and $\mu$ is a probability distribution in $U=(0,T)$. 

Suppose the residual input data points $\mathcal{T}^m$ are independently and identically distributed samples from probability distribution $\mu$. Let the empirical probability distribution on $\mathcal{T}^m$ by $\mu^m=\frac{1}{m}\sum_{j=1}^m\delta_{t_j}$. Thus, the empirical PINN loss is given by
\begin{equation}
\begin{split}\label{emp_loss}
\operatorname{Loss}_{m}^{\operatorname{PINN}}\left(h,\lambda\right)\!\!=\!\!\frac{\lambda_r}{m}\sum\limits_{j\!=\!1}^m\|[\ddot{h}\!-\!g(\dot{h},h)](t_j))\|^2\!+\! \frac{\lambda_b}{2}(\|h(0)\!-\!x_0\|^2\!+\!\|h(T)-x_T\|^2).
\end{split}
\end{equation}

We use neural network training to find the solution to the Euler-Lagrange equation \eqref{EL_Forward}. However, sometimes we do not know whether the neural network class $\mathcal{H}$ contains the exact solution $z^{*}$ of the Euler-Lagrange equation. \textcolor{red}{Even if we find a solution in $\mathcal{H}$, there is no guarantee that the minimizer we found matches with the solution $z*$, because there could be numerous local minimizers.}
% Even when $z^{*}\in \mathcal{H}$ and we find a global solution in $\mathcal{H}$, there is no guarantee that the minimizer we found coincides with the solution $z^{*}$, since there could be multiple global minimizers.

Suppose that there always exists a unique solution to the Euler-Lagrange equation \eqref{EL_Forward}. Since the expected PINN loss depends on the selection of the training data distribution $\mu$, we assume that the training data distribution $\mu$ satisfies the probabilistic space filling arguments conditions \cite{calder2019consistency,shin2020convergence}. Thus, we make the following assumptions.
\begin{Assu}
Let $\mu$ be probability distributions defined on $U=(0,T)$ and $\rho$ be its probability density with respect to the Lebesgue
measure on $U$.

1. The probabilty density $\rho$ is supported in $U$ and $\operatorname{inf}_{U}\rho>0$.

2. For every $\delta>0$, there exists a partition of $U$, $\{U_j^{\delta}\}_{j=1}^{K_{\delta}}$, that depend on $\delta$ such that for each $j$, there exists a interval $H^{\delta}(t_j))$ of length $\delta$ centered $t_j\in U$, respectively, satisfying $U_j^{\delta}\subset H^{\delta}(t_j)$.

3. There exist positive constants $c$ and $C$, such that for all $\delta>0$, the partitions from the above satisfy $\mu\left(U_{j}^{\epsilon}\right)\geq c\delta$  for all $j$ and for all $t \in U$, $\mu(B_{\delta}(t)\cap U)\leq C\delta$, where $B_{\delta}(t)$ is a closed interval of radius $\delta$ centered at $t\in U$. Here, the constants $c, C$ depend only on $\left(U, \mu\right)$.
\end{Assu}

We state the result that bounds the expected PINN loss in terms of the empirical PINN loss.
\begin{thm}
Suppose Assumption 1 holds. Assume that $g$ is Lipschitz for the variables $z$ and $\dot{z}$ with a Lipschitz constant $C_L$. Let $\mathcal{T}^m$ are independently and identically distributed samples from probability distribution $\mu$. For some $\alpha$ with $0<\alpha\leq 1$, let $h$ satisfy 
\begin{equation}
R(h)=[h]_{0,\alpha;U}+[h]_{1,\alpha;U}+[h]_{2,\alpha;U}<\infty. 
\end{equation}
Then, with probability $1-\sqrt{m}(1-1/\sqrt{m})^m$, at least, we have
\begin{equation}
\begin{split}
\operatorname{Loss}^{\operatorname{PINN}}\left(h;\lambda\right)\leq \frac{C}{c}m^{\frac{1}{2}} \cdot\left(\operatorname{Loss}_m^{\operatorname{PINN}}\left(h;\lambda\right) + \lambda_m^R R(h)\right),
\end{split}
\end{equation}
where $\lambda_m^R=\frac{3(C_L^2+1)}{C\cdot c^{{2\alpha-1}}}\cdot m^{-\alpha-\frac{1}{2}}$.
\end{thm}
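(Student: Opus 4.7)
Since the boundary terms in $\operatorname{Loss}^{\operatorname{PINN}}(h;\lambda)$ and $\operatorname{Loss}_m^{\operatorname{PINN}}(h;\lambda)$ are literally identical, the plan is to show that the expected residual $\int_U \|\psi\|^2\,d\mu$ with $\psi(t):=\ddot h(t)-g(\dot h(t),h(t))$ is bounded by $O(\sqrt m)$ times the empirical residual $\frac{1}{m}\sum_{j=1}^m\|\psi(t_j)\|^2$ plus a penalty proportional to $R(h)$. The full inequality then follows by absorbing the common boundary term into the $\sqrt m$ prefactor.

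First I would invoke the probabilistic space-filling argument in the spirit of Shin et al. Fix $\delta=m^{-1/2}$ and let $\{U_j^\delta\}_{j=1}^{K_\delta}$ be the partition from Assumption~1. The lower bound $\mu(U_j^\delta)\ge c\delta$ together with $\sum_j\mu(U_j^\delta)=1$ forces $K_\delta\le 1/(c\delta)$, so a union bound shows that the bad event ``some $U_j^\delta$ contains no sample'' has probability at most $K_\delta(1-c\delta)^m\le\sqrt m\,(1-1/\sqrt m)^m$ after absorbing the constant $c$; this produces the stated probability. On the complementary event one selects a single sample $t_{i(j)}\in\mathcal T^m\cap U_j^\delta$ for each cell, and these indices are distinct because the cells are disjoint.

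The next step is a Hölder estimate on $\psi$. Lipschitz continuity of $g$ in both arguments with constant $C_L$, together with the hypothesis $R(h)<\infty$, gives for all $t,s\in U$
$$\|\psi(t)-\psi(s)\|\le [h]_{2,\alpha;U}|t-s|^\alpha+C_L\bigl([h]_{1,\alpha;U}+[h]_{0,\alpha;U}\bigr)|t-s|^\alpha\le(C_L+1)R(h)|t-s|^\alpha.$$
Combining this with the elementary inequality $\|\psi(t)\|^2\le 2\|\psi(t_{i(j)})\|^2+2\|\psi(t)-\psi(t_{i(j)})\|^2$, integrating against $\mu$ cell by cell, and using $\mu(U_j^\delta)\le C\delta$ from Assumption~1(3) yields
$$\int_U\|\psi\|^2\,d\mu\le 2C\delta\sum_{j=1}^{K_\delta}\|\psi(t_{i(j)})\|^2+2(C_L+1)^2R(h)^2\delta^{2\alpha}.$$
Since $\{t_{i(j)}\}\subset\mathcal T^m$, the first sum is bounded by $\sum_{j=1}^m\|\psi(t_j)\|^2$. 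Substituting $\delta=m^{-1/2}$ produces the prefactor $O(\sqrt m)$ in front of the empirical residual and a penalty of order $R(h)^2 m^{-\alpha}$. Multiplying by $\lambda_r$, adding back the identical boundary contribution, and tracking the constants $c, C, C_L, \alpha$ gives the stated inequality; the final conversion of the quadratic penalty $R(h)^2$ into the linear penalty $R(h)$ appearing in $\lambda_m^R$ uses Young's inequality together with the explicit form of $\lambda_m^R$.

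The main obstacle is the calibration of $\delta$: it must be small enough for the Hölder error $R(h)\delta^\alpha$ to contract, but large enough that the space-filling event has non-trivial probability. The choice $\delta=m^{-1/2}$ is the sweet spot that matches the two effects and produces both the $\sqrt m$ prefactor and the $m^{-\alpha-1/2}$ rate in $\lambda_m^R$. A secondary subtlety is the careful bookkeeping of the constants $c$ and $C$ through the summation and through the Young-type linearization, which is what accounts for the somewhat unusual exponent $c^{2\alpha-1}$ in the denominator of $\lambda_m^R$.
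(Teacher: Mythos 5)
Your argument is essentially the paper's own proof: a probabilistic space-filling step with $\delta\sim m^{-1/2}$ (the paper's Lemma A.1 uses Voronoi cells of the samples where you use the partition cells directly, and it calibrates $\delta=c^{-1}m^{-1/2}$ rather than $m^{-1/2}$, which is exactly where the factor $c^{2\alpha}$ in $\lambda_m^R$ comes from), combined with the same H\"older/Lipschitz estimate $\|\psi(t)-\psi(s)\|\le (C_L+1)R(h)|t-s|^{\alpha}$ and the cell-measure bound $\mu(U_j^{\delta})\le C\delta$, followed by summing over one distinct sample per cell. The one point where you are more careful than the paper --- the penalty naturally emerges as $R(h)^2\delta^{2\alpha}$ rather than $R(h)\delta^{2\alpha}$, and your appeal to Young's inequality does not genuinely linearize it --- is an imprecision the paper's own proof commits silently in the same place, so you have inherited rather than introduced it.
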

\begin{proof}
The detailed proof is in Appendix A.
\end{proof}
\begin{rem}
(i) The Assumption 1 can be satisfied in many practical cases, for example, the uniform probability distributions on $U$. Also note that the functional $R(h)$ is independent of the samples and $\lambda_m^R\rightarrow0$, as $m\rightarrow\infty$. Thus, when $m\rightarrow\infty$, the influence of the functional term $R(h)$ will fade away. We call $R(h)$ the regularization functional, and denote the H\"{o}lder regularized loss as
\begin{equation}
\begin{split}\label{RegLos}
\operatorname{Loss}_m^{\operatorname{PINN}}\left(h;\lambda,\lambda_m^R\right)= \operatorname{Loss}_m^{\operatorname{PINN}}\left(h;\lambda\right) + \lambda_m^R R(h).
\end{split}
\end{equation}
(ii) Let the equation \eqref{EL_Forward} \textcolor{red}{coincide with} the Euler-Lagrange equation \eqref{EL_eqn}. If $A=\sigma\sigma^T$ is no-degenerate, and $f,\nabla f,A,\nabla A$ are bounded Lipschitz, then the function $g$ is Lipschitz for the variables $z$ and $\dot{z}$ in the Freidlin-Wentzell case. If adding the condition that the gradient of $\nabla f$ is bounded Lipschitz, then $g$ is Lipschitz for the variables $z$ and $\dot{z}$ in the Onsager-Machlup case.
\end{rem}

Next, we will show the convergence result of the empirical PINNs loss under the following assumptions.
\begin{Assu}Denote $U=(0,T)$. Let $k$ be the highest order of the derivative shown in the Euler-Lagrange equation \eqref{EL_Forward} and $\alpha\in(0,1]$. Suppose that

1. for each $m$, let $\mathcal{H}$ be a class of neural networks in $C^{k,\alpha}(\overline{U})$ such that for all $h\in \mathcal{H}$, $\ddot{h},\dot{h},h\in C^{0,\alpha}(U)$;

2. for each $m$, $\mathcal{H}$ contains a network $z_m^{*}$ satisfying $\operatorname{Loss}_m^{\operatorname{PINN}}(z_m^{*};\lambda)\!=\!\mathcal{O}(m^{-\!\alpha\!-\!\frac{1}{2}})$;

3. and we have,
\begin{equation}
R^{*}=\sup\limits_{m}R(z_m^{*})=\sup\limits_{m}\left([z_m^{*}]_{0,\alpha;U}+[z_m^{*}]_{1,\alpha;U}+[z_m^{*}]_{2,\alpha;U}\right)<\infty. 
\end{equation}
\end{Assu}

Now we state the following theorem.
\begin{thm}
Suppose Assumption 1 and Assumption 2 hold. Assume that $g$ is Lipschitz for the variables $z$ and $\dot{z}$ with a Lipschitz constant $C_L$. Let $\mathcal{T}^m$ be independently and identically distributed samples from probability distribution $\mu$. For some $\alpha$ with $0<\alpha\leq 1$, let $h_m$ be a minimizer of the H\"{o}lder regularized loss $\operatorname{Loss}_m^{\operatorname{PINN}}\left(h_m;\lambda,\lambda_m^R\right)$ defined in \eqref{RegLos}. Then, with probability $1-\sqrt{m}(1-1/\sqrt{m})^m$, at least, we have 
\begin{equation}
\begin{split}
\operatorname{Loss}^{\operatorname{PINN}}\left(h_m;\lambda\right)=\mathcal{O}(m^{-\alpha}).
\end{split}
\end{equation}
\end{thm}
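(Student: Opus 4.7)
The plan is to chain Theorem 1 with the defining minimization property of $h_m$ and the approximation guarantee from Assumption 2. First I would note that the Hölder regularized loss in \eqref{RegLos} is exactly the quantity that appears on the right-hand side of Theorem 1, i.e.
\begin{equation*}
\operatorname{Loss}^{\operatorname{PINN}}_m(h;\lambda) + \lambda_m^R R(h) = \operatorname{Loss}^{\operatorname{PINN}}_m(h;\lambda,\lambda_m^R).
\end{equation*}
Applying Theorem 1 directly to $h=h_m$ then gives, with probability at least $1-\sqrt{m}(1-1/\sqrt{m})^m$,
\begin{equation*}
\operatorname{Loss}^{\operatorname{PINN}}(h_m;\lambda) \leq \tfrac{C}{c} m^{1/2}\,\operatorname{Loss}^{\operatorname{PINN}}_m(h_m;\lambda,\lambda_m^R).
\end{equation*}
Here I would use parts 1 and 3 of Assumption 2, which guarantee that $h_m \in C^{k,\alpha}(\overline U)$ with $\ddot h_m,\dot h_m,h_m \in C^{0,\alpha}(U)$, so that Theorem 1 is indeed applicable.

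Next, I would exploit that $h_m$ is a minimizer of the Hölder regularized loss over $\mathcal{H}$ and that by Assumption 2.1 the network $z_m^{*}$ belongs to $\mathcal{H}$. This yields
\begin{equation*}
\operatorname{Loss}^{\operatorname{PINN}}_m(h_m;\lambda,\lambda_m^R) \leq \operatorname{Loss}^{\operatorname{PINN}}_m(z_m^{*};\lambda,\lambda_m^R) = \operatorname{Loss}^{\operatorname{PINN}}_m(z_m^{*};\lambda) + \lambda_m^R R(z_m^{*}).
\end{equation*}
The first term on the right is $\mathcal{O}(m^{-\alpha-1/2})$ by Assumption 2.2. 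For the second, Assumption 2.3 provides the uniform bound $R(z_m^{*})\leq R^{*}$, and by the explicit form of $\lambda_m^R$ in Theorem 1 we have $\lambda_m^R=\mathcal{O}(m^{-\alpha-1/2})$. Combining these observations gives $\operatorname{Loss}^{\operatorname{PINN}}_m(z_m^{*};\lambda,\lambda_m^R)=\mathcal{O}(m^{-\alpha-1/2})$.

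Finally, inserting this estimate into the bound from the first step I would obtain
\begin{equation*}
\operatorname{Loss}^{\operatorname{PINN}}(h_m;\lambda) \leq \tfrac{C}{c} m^{1/2}\cdot \mathcal{O}(m^{-\alpha-1/2}) = \mathcal{O}(m^{-\alpha}),
\end{equation*}
which is the claimed rate, valid on the same high-probability event provided by Theorem 1. The argument is essentially a clean two-line deduction once Theorem 1 and Assumption 2 are in place; there is no real analytic obstacle here. The only subtle point, which I would pause to verify explicitly, is that the regularity hypotheses on $h$ required for Theorem 1 are met by the minimizer $h_m$ itself (so that the probabilistic bound is applicable to it), and this is precisely why Assumption 2.1 is formulated as a blanket regularity assumption on the entire class $\mathcal{H}$ rather than only on the approximants $z_m^{*}$.
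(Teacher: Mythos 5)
Your proposal is correct and follows essentially the same route as the paper: use the minimality of $h_m$ against the approximant $z_m^{*}$ together with Assumption 2 to bound the regularized empirical loss by $\mathcal{O}(m^{-\alpha-1/2})$, then invoke Theorem 3.1 to convert this into the $\mathcal{O}(m^{-\alpha})$ bound on the expected loss. The only differences are cosmetic (the paper also records the trivial lower bound $\operatorname{Loss}_m^{\operatorname{PINN}}(h_m;\lambda,\lambda_m^R)\geq \lambda_m^R R(h_m)$, which is not needed for the stated conclusion, and your attribution of the membership $z_m^{*}\in\mathcal{H}$ to part 1 rather than part 2 of Assumption 2 is a minor mislabel).
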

\begin{proof}
The detailed proof is in Appendix B.
\end{proof}

\subsection{Inverse problem of the parameterized Euler-Lagrange equation}\label{inv1}
Consider the following parameterized Euler-Lagrange equation 
\begin{equation}
\begin{split}\label{EL_Par}
\ddot{z}=g(z,\dot{z},\beta),
\end{split}
\end{equation}
with $z(0)=x_0, z(T)=x_T$, where the parameter $\beta$ comes from the drift function $f(z,\beta_1)$ and diffusion function $\sigma$ of the stochastic dynamical system \eqref{SDE_model} and $\beta=(\beta_1,\sigma)$. Here $g$ is related to the drift function $f$ and the diffusion function $\sigma$.

In this subsection, we learn the parameters of the drift function for the stochastic dynamical system \eqref{SDE_model} under both the Onsager-Machlup framework and the Freidlin-Wentzell framework through the parameterized Euler-Lagrange equation and observation data. The observation data is calculated by the Markovian bridge process \eqref{MarBri1} for the Onsager-Machlup framework and \eqref{MarBri2} for the Freidlin-Wentzell framework, corresponding to the stochastic differential equation. We simulate 10 transition paths for each framework, and then take the expectation as an approximation of the most probable transition pathway. For a comparison, we also use the most probable transition pathway, computed by the neural network, as the observation data.  

However, in high dimensions, there is no simple approximation like \eqref{MarBri1} and \eqref{MarBri2} in one dimension to simplify the Markovian bridge process \eqref{MarBri}. So we would have to compute the committor function or the Kolmogorov backward equation, which is not ideal. Instead, we do a Gaussian perturbation of the most probable transition pathway, computed by the neural network as the observation data to approximate the transition paths.

Denote the observation data $\{z(t_i)\}_{i=1}^{N}$ with $z(t_0)=x_0$ and $z(t_N)=x_T$. Let $h\in\mathcal{H}$. We define the observation data loss as
\begin{align}
\operatorname{Loss}_{ob}&=\frac{1}{N} \sum_{j=1}^{N}(h(t_i)- z(t_i))^2 .\
\end{align}
Thus our objective (loss) function is 
\begin{equation}
\begin{split}\label{Los_Fun}
\operatorname{Loss}^{\operatorname{PINN}}\left(h,\beta;\lambda\right)=&\|[\ddot{h}-g(h,\dot{h},\beta)\|_{L^2(U;\mu)}^2+ \frac{\lambda_d}{N} \sum_{j=1}^{N}\|h(t_j)- z(t_j)\|^2,
\end{split}
\end{equation}
where $\lambda_d$ is a positive constant, which is used to balance the PINN loss and observation data loss. Then we use Adam optimizer to train the loss function to obtain the optimal neural network and parameters $\beta$ in the stochastic differential equation.

\subsection{Inverse problem of the non-parameterized Euler-Lagrange equation}
In subsection \ref{inv1}, we parameterized the drift function and learned the parameters and the most probable transition pathway from sparse observation data. In this subsection, we consider the unknown drift case. We use a neural network to approximate the drift function and discretize the Euler-Lagrange equation to obtain the loss function. Thus, let us suppose the Euler-Lagrange equation:
\begin{equation}
\begin{split}
\ddot{z}=g(z,\dot{z},f(z)),
\end{split}
\end{equation}
where $f$ is the drift function of the stochastic differential equation. Let the observation data be $\{z(t_i)\}_{i=1}^{N}$, where $t_i=i\tau$ and $\tau=\frac{T}{N}$.
The discretization scheme for the Euler-Lagrange equation is
\begin{equation}
\begin{split}
\frac{z(t_{i+1})-2z(t_i)+z(t_{i-1})}{\tau^2}=g(z(t_i), \frac{z(t_{i+1})-z(t_{i-1})}{2\tau},f(z_i)).
\end{split}
\end{equation}
We construct a fully connected neural network $f_{NN}$ to approximate the drift term $f$ with input $z$. Therefore, the loss function of non-parameterized Euler-Lagrange equation is defined as
\begin{equation}
\begin{split}
\operatorname{Loss}_{ode}\!\!=\!\!\frac{1}{N\!\!-\!\!2}\sum_{i=2}^{N\!-\!1}\bigg(\frac{z(t_{i+1})\!-\!2z(t_i)\!+\!z(t_{i-1})}{\tau^2}\!-\!g(z(t_i),\frac{z(t_{i+1})-z(t_{i-1})}{2\tau},f_{NN}(z_i)) \bigg)^2.
\end{split}
\end{equation} 
The goal is to learn the drift function. But the observation is only one path, which is difficult to recover the stochastic differential equation well. So we also add some observation data of the drift function, $\{f(z_i)\}_{i=1}^{N_d}$. We use mean square error to measure this loss
 \begin{equation}
\begin{split}
\operatorname{Loss}_{drift}=\frac{1}{N_d}\sum_{i=1}^{N_d}\big(f_{NN}(z_i)-f(z_i) \big)^2.
\end{split}
\end{equation} 
Therefore, the total loss of non-parameterized Euler-Lagrange equation is
 \begin{equation}\label{loss_unknow}
\begin{split}
\operatorname{Loss}=\operatorname{Loss}_{ode}+\gamma_1 \operatorname{Loss}_{drift}+\gamma_2 ||w_d||_{2}^2,
\end{split}
\end{equation} 
where $||w_d||_{2}$ is the $L_2$ regularization with the weights $w_d$ in the neural network, and $\gamma_1$ and $\gamma_2$ are positive constant parameters to balance the three loss terms. 
Note that if $\gamma_1=0$, no observation data of the drift function is considered, and if $\gamma_2=0$, no regularization of the weights is considered.
%Note that if $\gamma_1=0$, there will be no observation data of the drift function considering.
% In the following computation, in the case of unknown drift term, we use $4\times 20$ neural network to approximate $f(z)$, and $tanh$ as the activation function. The learning rate is $10^{-4}$. The $Adam$ optimization method are used to train the loss function.

\section{Numerical experiments}
In this section, we conduct the numerical experiments for several examples in both the Freidlin-Wentzell framework and the Onsager-Machlup framework. For these examples, we compute the most probable transition pathway in two ways. One is averaging the samples of the transition paths generated from the Markovian bridge process, and the other is using the PINN method to solve the Euler-Lagrange equation. Moreover, we recover stochastic differential equations from the observation data in both parameter and non-parameter cases.  

In our experiments, the Markovian bridge process data are computed by the Markovian bridge process \eqref{MarBri1} and \eqref{MarBri2}. In the Freidlin-Wentzell framework, we simulate 10 transition paths with noise intensity $\sigma =0.0001$, using the approximate Markovian bridge process \eqref{MarBri2}, and average these 10 transition paths to approximate the most probable transition pathway. In the Onsager-Machlup framework, we chose the approximate Markovian bridge process \eqref{MarBri1} with the noise intensity $\sigma =0.1$. All the neural networks have 4 hidden layers and 20 neurons per layer, with $\operatorname{tanh}$ activation function. The weights are initialized with truncated normal distributions and the biases are initialized as zero.
The Adam optimizer with a learning rate of $10^{-4}$ is used to train the loss function. In one dimension, the number of residual points for evaluating the Euler-Lagrange equation is $m = 1001$, while in two dimensions the number is $m=501$. 

\subsection{Stochastic double-well system}
Consider the following stochastic double-well system
\begin{equation}
\begin{split}
dX(t)=(X(t)-X(t)^3)dt+\sigma dW(t), t\in[0,T].
\end{split}
\end{equation}

There are two stable states $x_1=-1$, $x_2=1$ and one unstable state $x_0=0$ for the corresponding deterministic system. We call $x_1$ and $x_2$ are the metastable states in the stochastic double-well system and consider transition phenomena between these two metastable states. 

\noindent \textbf{Freidlin-Wentzell framework}

Under the Freidlin-Wentzell framework, the most probable transition pathway satisfies the Euler-Lagrange equation
\begin{equation}
\begin{split}\label{EL_DW}
\ddot{z}=(1-3z^2)(z-z^3),
\end{split}
\end{equation}
with initial point $z(0)=-1$ and final point $z(T)=1$.

We compute the most probable transition pathway between $z(0)=-1$ and $z(T)=1$ in two different ways for different transition times as shown in Fig.\ref{dw_mptp}(a). The red curves are computed by the physics-informed neural network to the Euler-Lagrange equation \eqref{EL_DW}, through optimising the loss function (3.3) with $\lambda_r=1$ and $\lambda_b=1$.
The green curves are obtained from the Markovian bridge process of transition paths. As shown in Fig.\ref{dw_mptp}(a), these two methods match for small transition times. More precisely, when the transition time is less than 5, the most probable transition pathway averaged of the transition paths is a good approximation of the most probable transition pathway.

\begin{figure}
\begin{minipage}[]{0.48 \textwidth}
 \leftline{~~~~~~~\tiny\textbf{(a)}}
\centerline{\includegraphics[width=5cm,height=5cm]{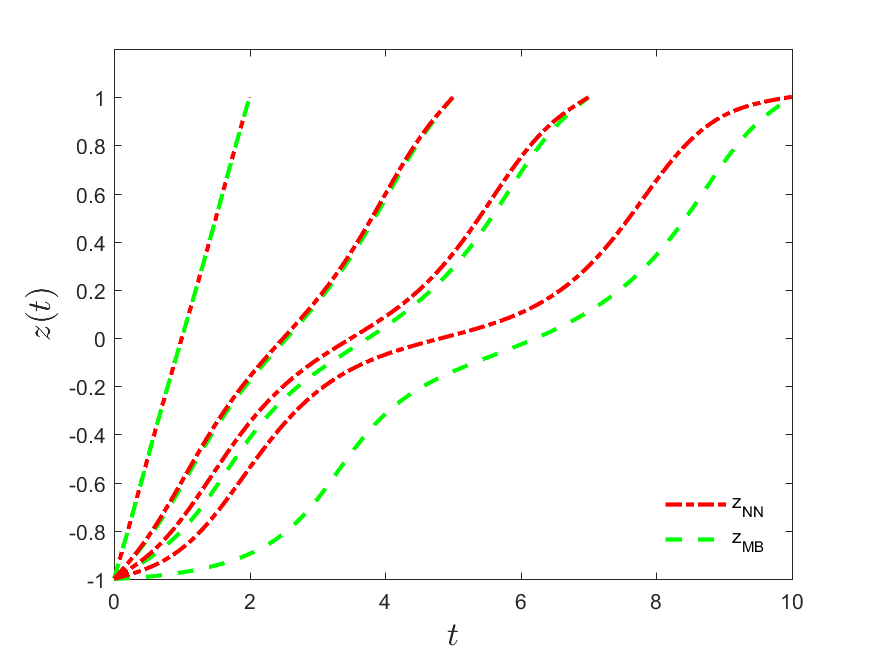}}
\end{minipage}
\hfill
\begin{minipage}[]{0.48 \textwidth}
 \leftline{~~~~~~~\tiny\textbf{(b)}}
\centerline{\includegraphics[width=5cm,height=5cm]{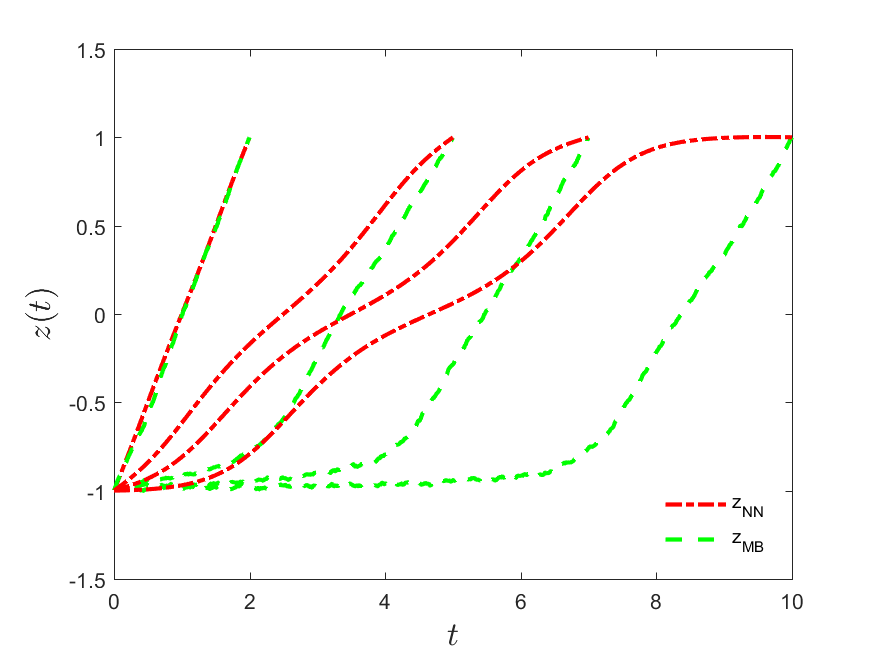}}
\end{minipage}
\caption{\textbf{The most probable transition pathways of the stochastic double-well system for different transition times $T=2, 5, 7, 10$}. (a) Freidlin-Wentzell framework ($\sigma \ll 1$); (b) Onsager-Machlup framework ($\sigma = 0.1$). The red curves are computed by the neural network for the Euler-Lagrange equation. The green curves are computed through the transition paths generated by the Markovian bridge process.}
\label{dw_mptp}
\end{figure}

We also extract the parameters of the drift function ($f(x )=\lambda_1x+\lambda_2x^3$) from the observation data through optimising the loss function (3.11) with $\lambda_d=1$. 
The observation data are shown in Fig.\ref{dw_mptp}(a) and we uniformly choose the number of the observation data $N=21$. We present the parameter evolution predictions as the iteration of the optimizer progresses in Fig.\ref{1D_dw_fw_21}(a). The black curves are true parameters with $\lambda_1=1$ and $\lambda_2=-1$. The green curves are learned by Markovian bridge process observation data, and the red curves are computed by neural network observation data. The learned $\lambda_1$ and $\lambda_2$ are presented in Fig.\ref{1D_dw_fw_21}(a1)-(a4) for different transition times $T=2,5,7,10$. The results show that we can learn the parameters well with an error rate of less than $5\%$ in all cases. Moreover, the convergence is faster for larger transition time $T$.
We present the learned drift function in Fig.\ref{1D_dw_fw_21}(b1)-(b4). We see that the learned drift function matches perfectly with the true drift function. Even for transition time $T=10$, the most probable transition pathways in these two cases vary so much. These results show that a longer transition time gives us more information about the system. 

\begin{figure}
\begin{minipage}[]{0.2 \textwidth}
 \leftline{~~~~~~~\tiny\textbf{(a1)}}
\centerline{\includegraphics[width=3.4cm,height=3.4cm]{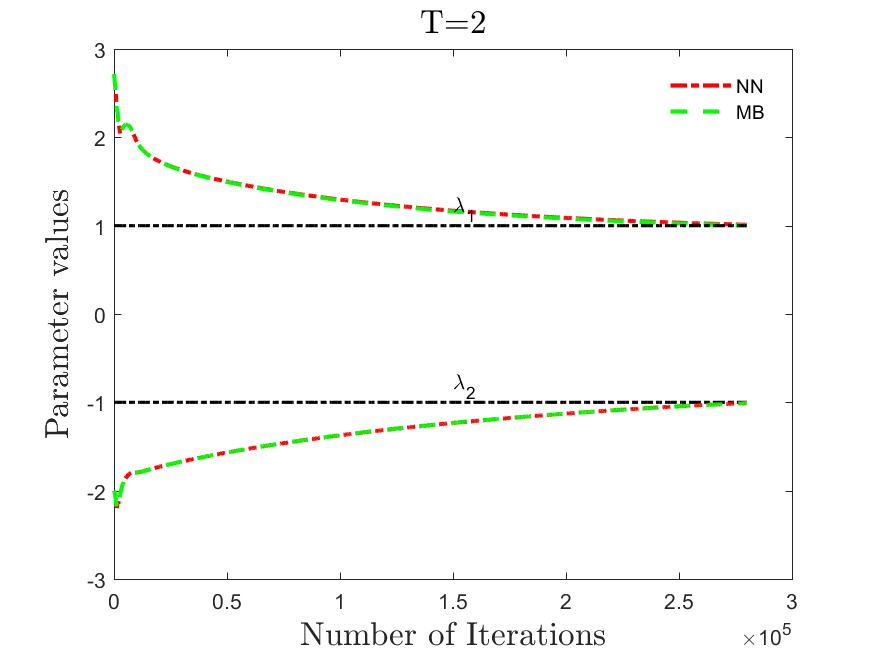}}
\end{minipage}
\hfill
\begin{minipage}[]{0.2 \textwidth}
 \leftline{~~~~~~~\tiny\textbf{(a2)}}
\centerline{\includegraphics[width=3.4cm,height=3.4cm]{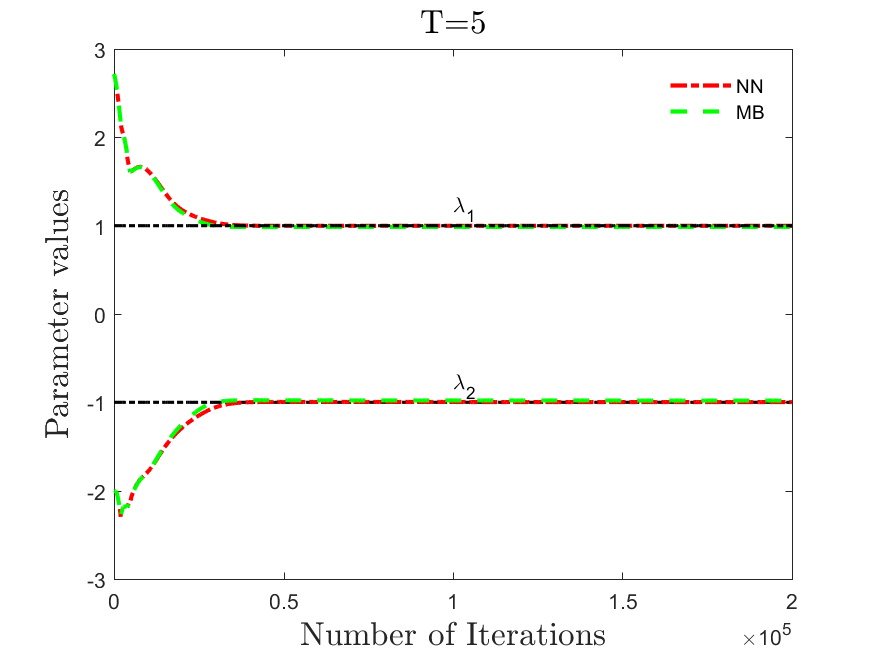}}
\end{minipage}
\hfill
\begin{minipage}[]{0.2 \textwidth}
 \leftline{~~~~~~~\tiny\textbf{(a3)}}
\centerline{\includegraphics[width=3.4cm,height=3.4cm]{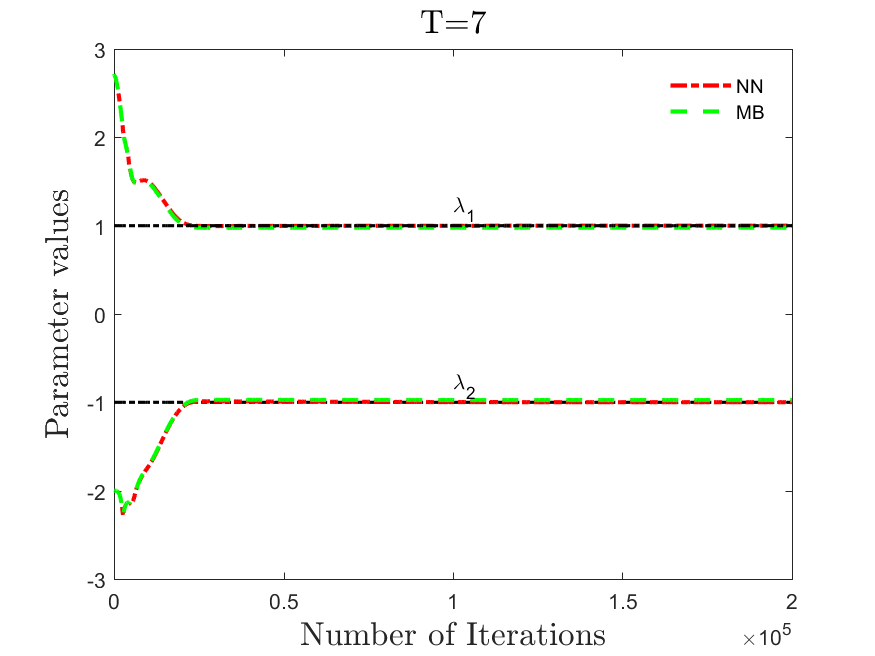}}
\end{minipage}
\hfill
\begin{minipage}[]{0.2 \textwidth}
 \leftline{~~~~~~~\tiny\textbf{(a4)}}
\centerline{\includegraphics[width=3.4cm,height=3.4cm]{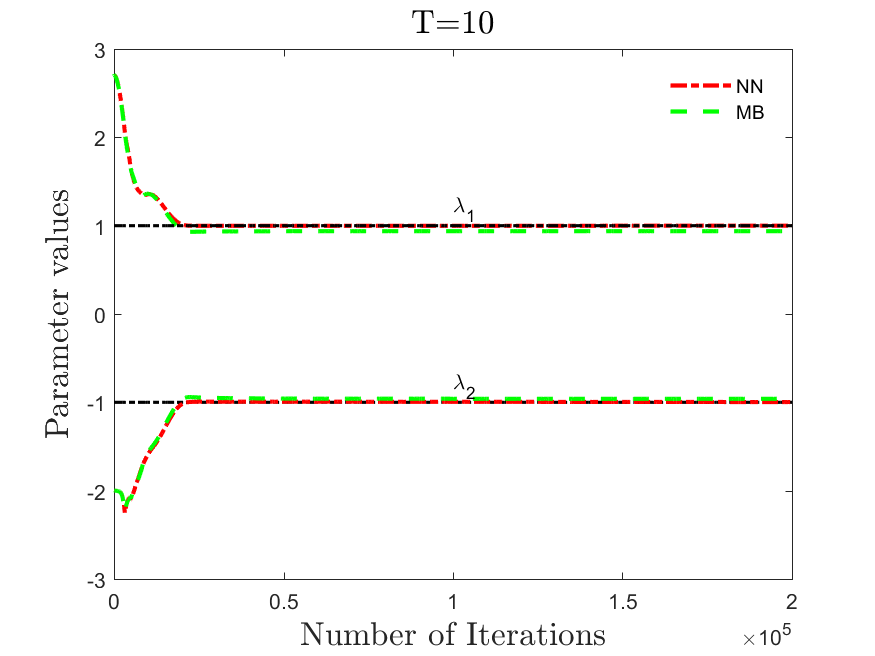}}
\end{minipage}
\hfill
\begin{minipage}[]{0.2 \textwidth}
 \leftline{~~~~~~~\tiny\textbf{(b1)}}
\centerline{\includegraphics[width=3.4cm,height=3.4cm]{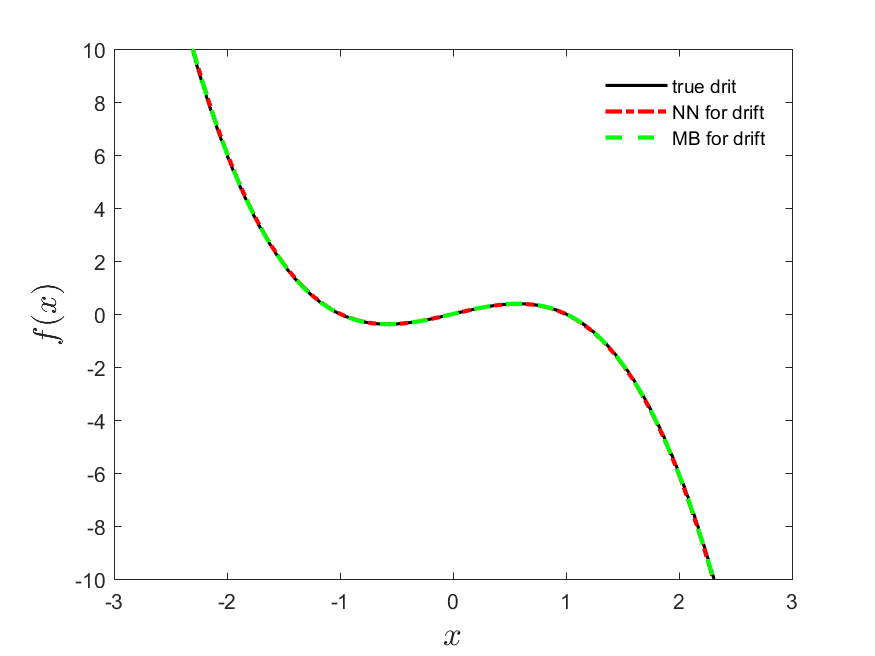}}
\end{minipage}
\hfill
\begin{minipage}[]{0.2 \textwidth}
 \leftline{~~~~~~~\tiny\textbf{(b2)}}
\centerline{\includegraphics[width=3.4cm,height=3.4cm]{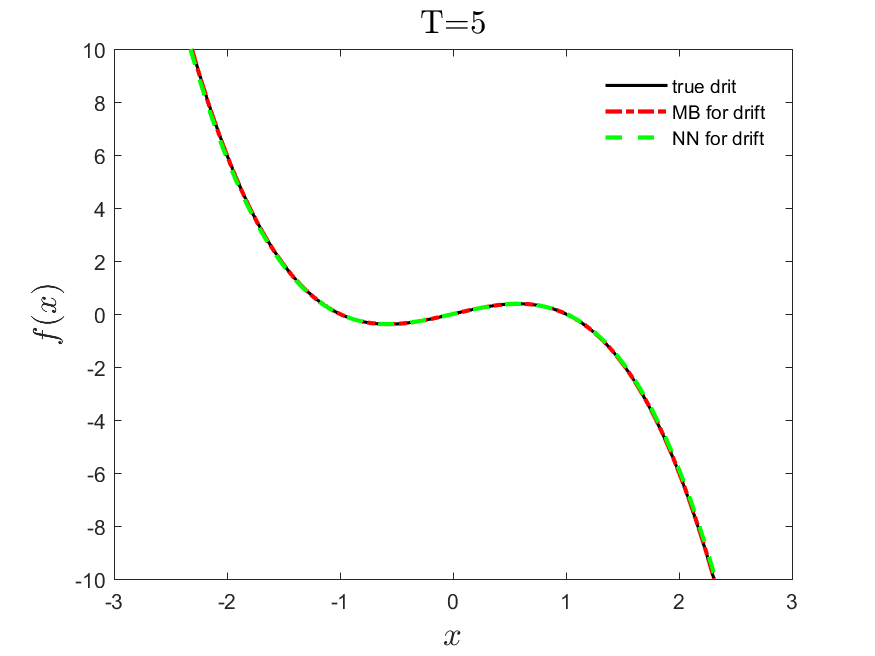}}
\end{minipage}
\hfill
\begin{minipage}[]{0.2 \textwidth}
 \leftline{~~~~~~~\tiny\textbf{(b3)}}
\centerline{\includegraphics[width=3.4cm,height=3.4cm]{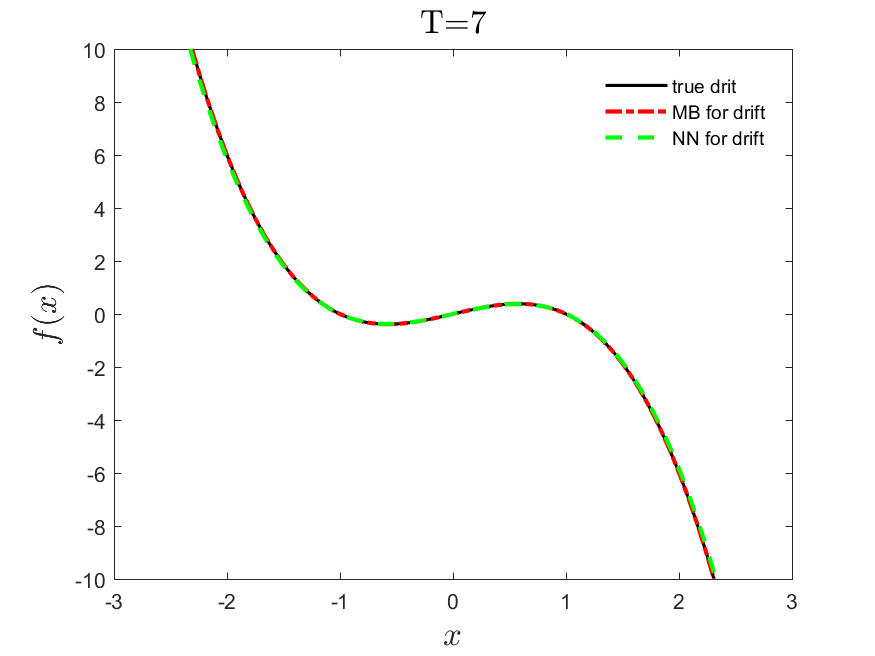}}
\end{minipage}
\hfill
\begin{minipage}[]{0.2 \textwidth}
 \leftline{~~~~~~~\tiny\textbf{(b4)}}
\centerline{\includegraphics[width=3.4cm,height=3.4cm]{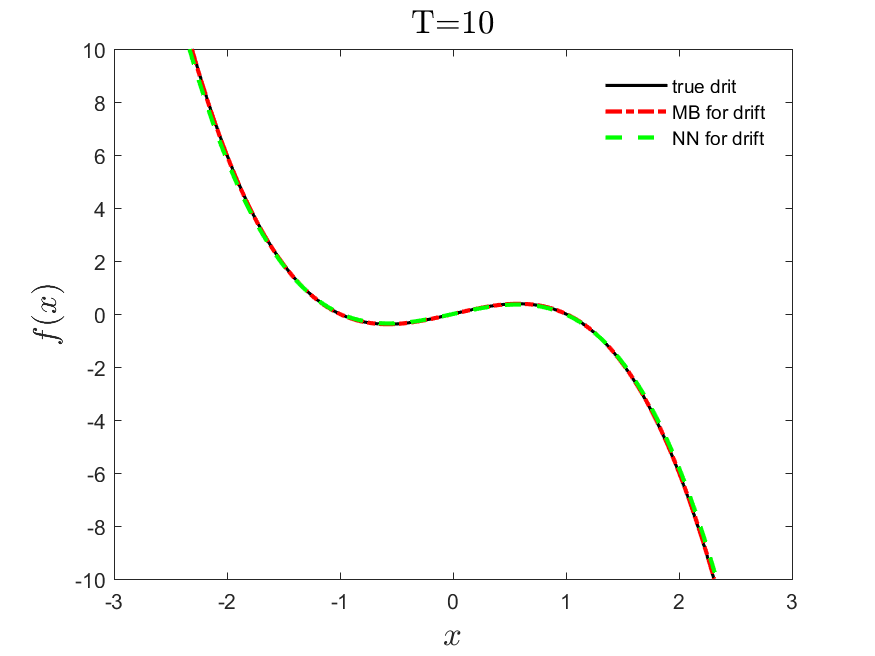}}
\end{minipage}
\caption{\textbf{Parametric estimation in Freidlin-Wentzell case ($\sigma\ll1$) - stochastic double-well system:}  (a) Parameter evolution as the iteration of optimizer progresses for different transition times; (b) learned drift functions for different transition times. Black curves: true parameters and true drift; green curves: Markovian bridge process observation data; red curves: neural network observation data.}
\label{1D_dw_fw_21}
\end{figure}

\noindent \textbf{Onsager-Machlup framework}

The Onsager-Machlup framework is similar to the Freidlin-Wentzell framework, where the Euler-Lagrange equation is
\begin{equation}
\begin{split}\label{EL_dw_om}
\ddot{z}=(1-3z^2)(z-z^3)-3\sigma^2z,
\end{split}
\end{equation}
with two boundary points $z(0)=-1$ and $z(T)=1$. Here, we take the noise intensity $\sigma=0.1$.

We compute the most probable transition pathway between $z(0)=-1$ and $z(T)=1$ for transition times $T=2,5$ as shown in Fig.\ref{dw_mptp}(b). The red curves are computed by the neural network to the Euler-Lagrange equation \eqref{EL_dw_om}, through optimising the loss function (3.3) with $\lambda_r=1$ and $\lambda_b=1$. The green curves are obtained from the transition paths of the Markovian bridge process (2.12). As shown in Fig. \ref{dw_mptp}(b), for small transition times, these two ways of computing the most transition pathways coincide well. 

\begin{figure}
\begin{minipage}[]{0.3 \textwidth}
 \leftline{~~~~~~~\tiny\textbf{(a1)}}
\centerline{\includegraphics[width=4.4cm,height=4.4cm]{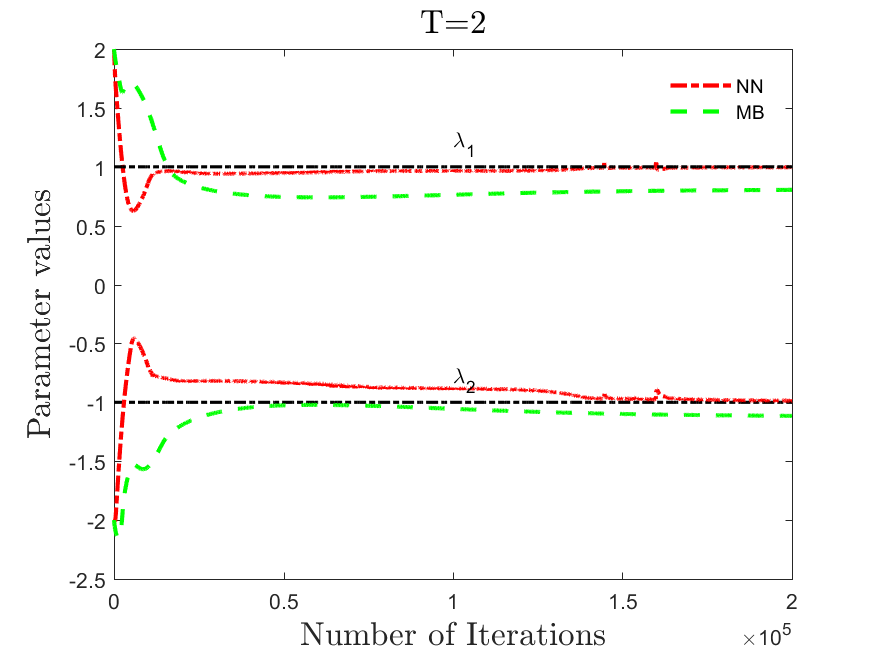}}
\end{minipage}
\hfill
\begin{minipage}[]{0.3 \textwidth}
 \leftline{~~~~~~~\tiny\textbf{(a2)}}
\centerline{\includegraphics[width=4.4cm,height=4.4cm]{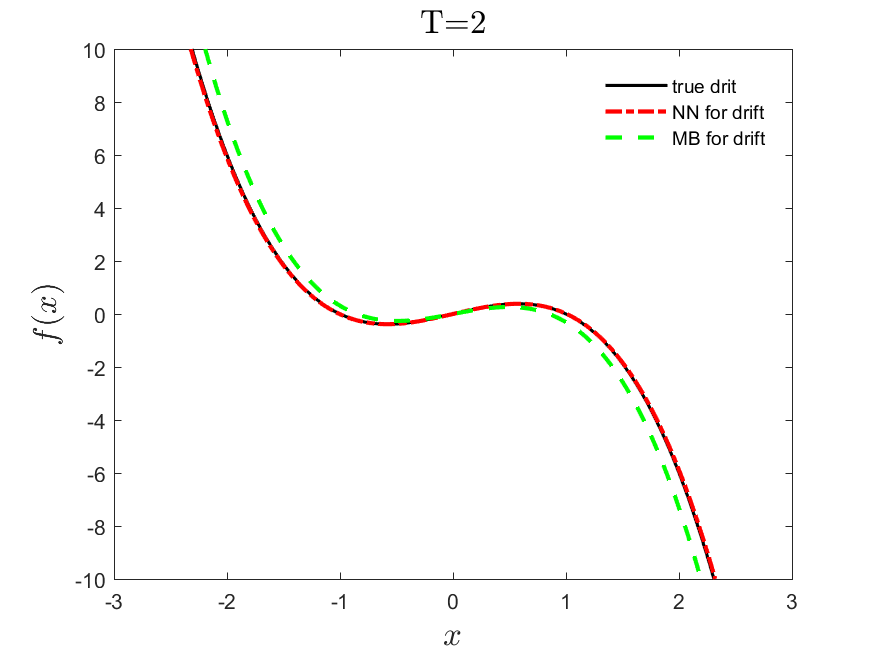}}
\end{minipage}
\hfill
\begin{minipage}[]{0.3 \textwidth}
 \leftline{~~~~~~~\tiny\textbf{(a3)}}
\centerline{\includegraphics[width=4.4cm,height=4.4cm]{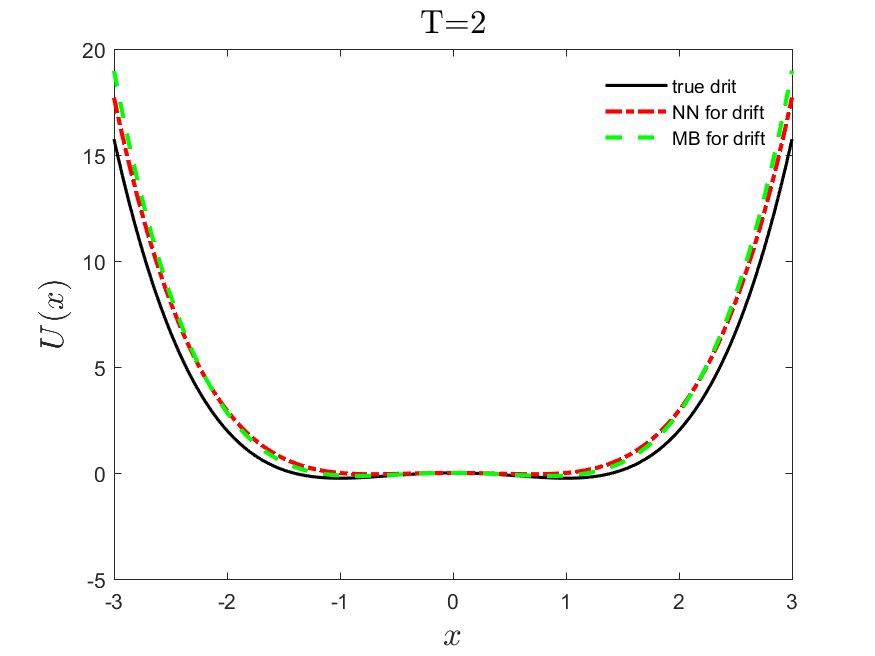}}
\end{minipage}
\begin{minipage}[]{0.3 \textwidth}
 \leftline{~~~~~~~\tiny\textbf{(b1)}}
\centerline{\includegraphics[width=4.4cm,height=4.4cm]{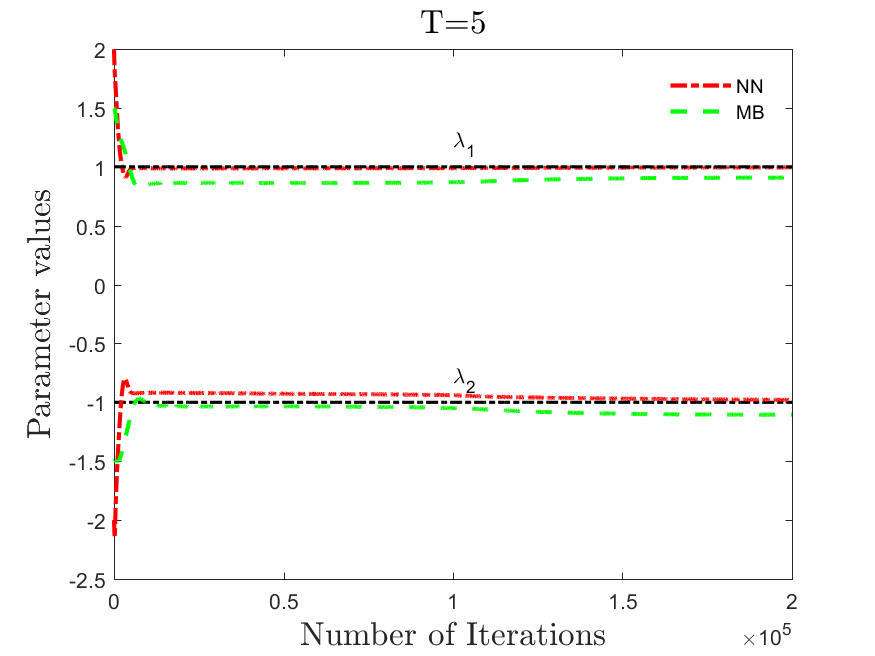}}
\end{minipage}
\hfill
\begin{minipage}[]{0.3 \textwidth}
 \leftline{~~~~~~~\tiny\textbf{(b2)}}
\centerline{\includegraphics[width=4.4cm,height=4.4cm]{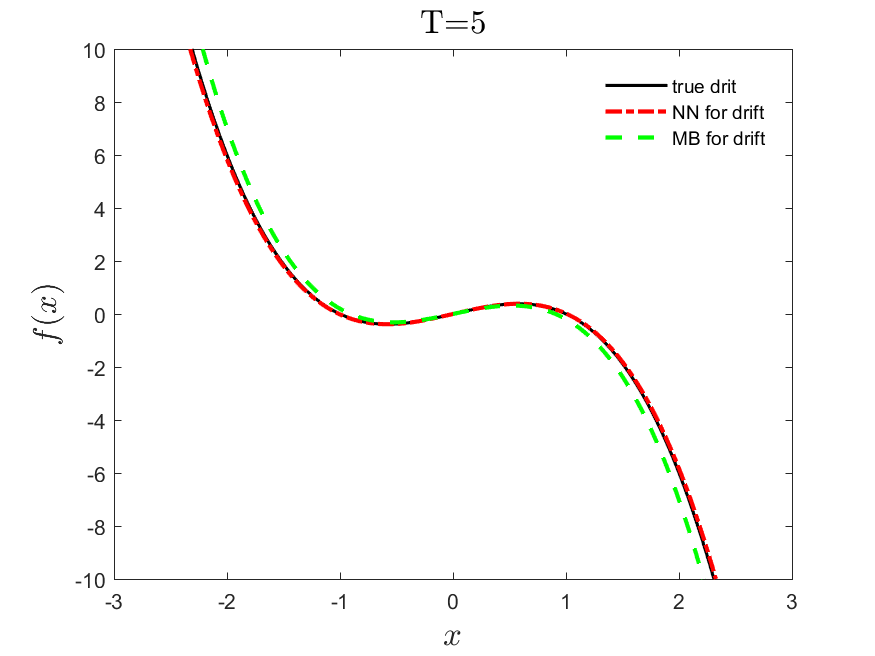}}
\end{minipage}
\hfill
\begin{minipage}[]{0.3 \textwidth}
 \leftline{~~~~~~~\tiny\textbf{(b3)}}
\centerline{\includegraphics[width=4.4cm,height=4.4cm]{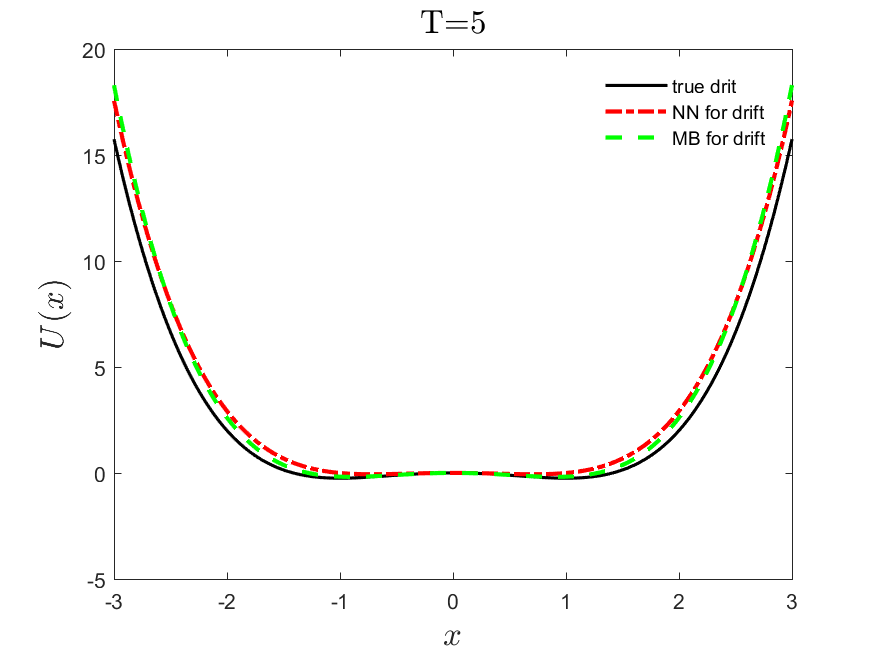}}
\end{minipage}
\caption{\textbf{Parametric estimation in Onsager-Machlup case ($\sigma=0.1$) - stochastic double-well system:} (a) Transition time T=2; (b) Transition time T=5. Left: learned parameter $\lambda_1$ and $\lambda_2$; middle: learned drift function; right: learned potential function. Black lines: true parameters, true drift and true potential; green lines: Markovian bridge process observation data; red lines: neural network observation data.}
\label{1D_dw_om_21}
\end{figure}

Moreover, we recover the parameters of drift function ($f(x )=\lambda_1x+\lambda_2x^3$) from the observation data through optimising the loss function (3.11) with $\lambda_d=10^3$ and learning rate $10^{-3}$. \textcolor{red}{These parameters will have an effect on the behavior of stochastic dynamical systems. As a result, it is critical to learn them.}.
The observation data is shown in Fig.\ref{dw_mptp}(b) and we uniformly choose the number of the observation data $N=21$. Fig.\ref{1D_dw_om_21} shows learned parameters, drift function, and potential function for transition times $T=2$ and $T=5$. Fig.\ref{1D_dw_om_21}(a1) and (b1) present the parameter evolution predictions as the iteration of the optimizer progresses. The black curves are true parameters with $\lambda_1=1$ and $\lambda_2=-1$. Red and green curves are computed by the neural network and the Markovian bridge process observation data, respectively.
The learned drift function and learned potential function are shown in Fig.\ref{1D_dw_om_21}(a2)-(a3) and (b2)-(b3). We could see that the physics-informed neural network could effectively recover the dynamical structures for both the neural network observation data and the Markovian bridge process observation data. Comparing to the transition time $T=2$, the learning results are better for $T=5$.
For $T=2$, we could not learn the parameters well from the Markovian bridge process observation data, although the data is very similar to the neural network data. The reason is that the data for $T=2$ is a straight line, which gives less information about the system and makes it more sensitive to learning the parameters.

\subsection{Stochastic gene regulation model}
Consider the following gene regulation model \cite{smolen1998frequency, wang2018likelihood}
\begin{equation}
\begin{split}\label{tfa_pot}
dX(t)=(\frac{k_fX(t)^2}{X(t)^2+K_d}-k_dX(t)+R_{bas})dt+\sigma dW(t),
\end{split}
\end{equation}
where $\sigma$ is the noise intensity. It is assumed that the transcription rate saturates with the transcription factor activator dimer concentration to a maximal rate $k_f$, transcription factor activator degrades with first-order kinetics with the rate $k_d$, and the transcription factor activator dimer dissociates from specific responsive elements with the constant $K_D$. The basal rate of the synthesis of the activator is $R_{bas}$.

This is a gradient system 
\begin{equation}
\begin{split}
dX(t)=-U'(X(t))dt+\sigma dW(t),
\end{split}
\end{equation}
with the potential 
$U(x)=k_f\sqrt{K_D}\arctan{\frac{x}{\sqrt{K_D}}}+\frac{k_d}{2}x^2-(R_{bas}+k_f)x.$
We choose proper parameters $K_d=10$, $k_f=6$, $k_d=1$ and $R_{bas}=0.6$ in this genetic regulatory system on the basis of genetic significance. Thus, there are two stable states 0.62685 and 4.28343, and a unstable state 1.48971 in the corresponding deterministic system. 

\noindent \textbf{Freidlin-Wentzell framework}

Under the Freidlin-Wentzell framework, the most probable transition pathway satisfies the Euler-Lagrange equation
\begin{equation}
\begin{split}\label{EL_tfa}
\ddot{z}=(\frac{k_fz^2}{z^2+K_d}-k_dz+R_{bas})(\frac{2k_fK_dz}{(z^2+K_d)^2}-k_d),
\end{split}
\end{equation}
with two boundary points $z(0)=0.62685$ and $z(T)=4.28343$.

We compute the most probable transition pathway for different transition times as shown in Fig.\ref{tfa_mptp}(a). The red curves are computed by the neural network to the Euler-Lagrange equation \eqref{EL_tfa}, through optimizing the loss function \eqref{emp_loss} with $\lambda_r=1$ and $\lambda_b=1$. 
The green curves are obtained from the Markovian bridge process of transition paths. As shown in Fig.\ref{tfa_mptp}(a), for different transition times, they coincide very well.

\begin{figure}
\begin{minipage}[]{0.48 \textwidth}
 \leftline{~~~~~~~\tiny\textbf{(a)}}
\centerline{\includegraphics[width=5cm,height=5cm]{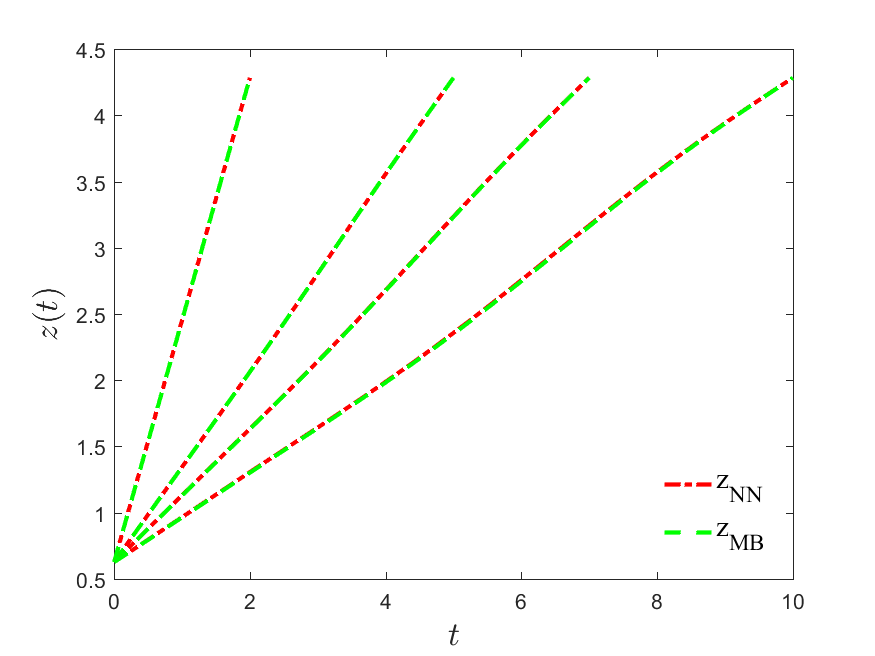}}
\end{minipage}
\hfill
\begin{minipage}[]{0.48 \textwidth}
 \leftline{~~~~~~~\tiny\textbf{(b)}}
\centerline{\includegraphics[width=5cm,height=5cm]{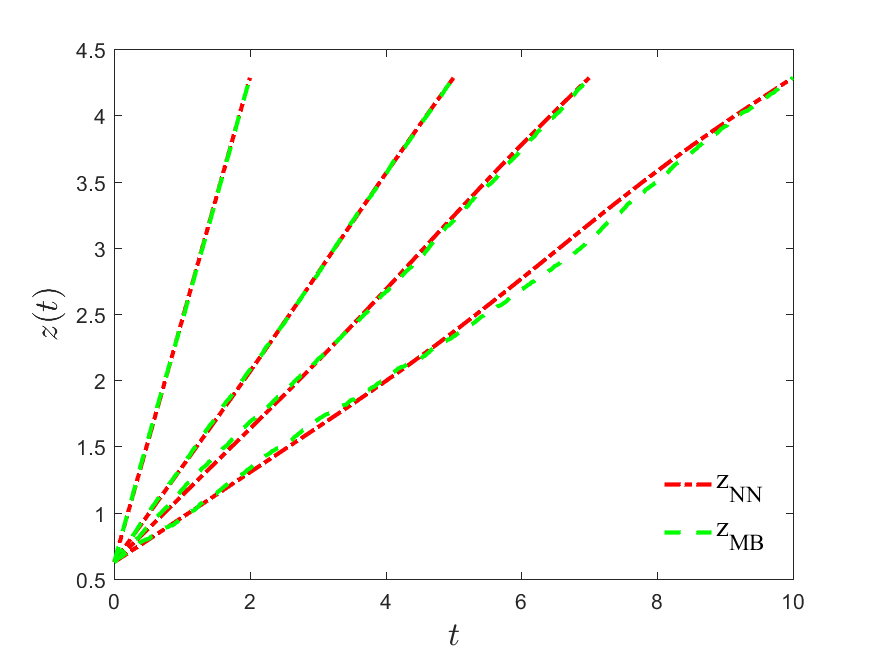}}
\end{minipage}
\caption{\textbf{The most probable transition pathway of the stochastic gene regulation model for different transition times $T=2, 5, 7, 10$}. (a) Freidlin-Wentzell framework ($\sigma\ll1$); (b) Onsager-Machlup framework ($\sigma=0.1$). Red curves are computed by the neural network for the Euler-Lagrange equation. Green curves are computed through the transition paths generated by the Markovian bridge process.}
\label{tfa_mptp}
\end{figure}

\textcolor{red}{
The maximal rate $k_f$, the degradation rate $k_d$, the synthesis rate $R_{bas}$, and the dissociation concentration $K_d$ are very important and have the physical meaning in this system. So it is very meaningful to learn there parameter using machine learning method.}
We learn these parameters from the observation data through optimising the loss function \eqref{Los_Fun} with $\lambda_d=1$ with learning rate $10^{-3}$. The observation data are given in Fig.\ref{tfa_mptp}(a) and we uniformly choose the number of the observation data $N=21$. The parameter evolution predictions as the iteration of the optimizer progresses for transition times $T=5,7,10$ are shown in Fig.\ref{1D_bio_fw_11}(a1)-(a3). Black curves are true parameters with $K_d=10$, $k_f=6$, $k_d=1$ and $R_{bas}=0.6$. Green curves are learned by Markovian bridge process observation data and red curves are computed by neural network observation data. We see that the learned parameters are very close to the real parameters with an error of less than $5\%$ in both types of observation data. Fig.\ref{1D_bio_fw_11}(b1)-(b3) present the learned results of the drift functions. It shows that the PINNs method can effectively recover the stochastic dynamical structures for both neural network observation data and Markovian bridge process observation data. 

\begin{figure}
\begin{minipage}[]{0.3 \textwidth}
 \leftline{~~~~~~~\tiny\textbf{(a1)}}
\centerline{\includegraphics[width=4.2cm,height=4.2cm]{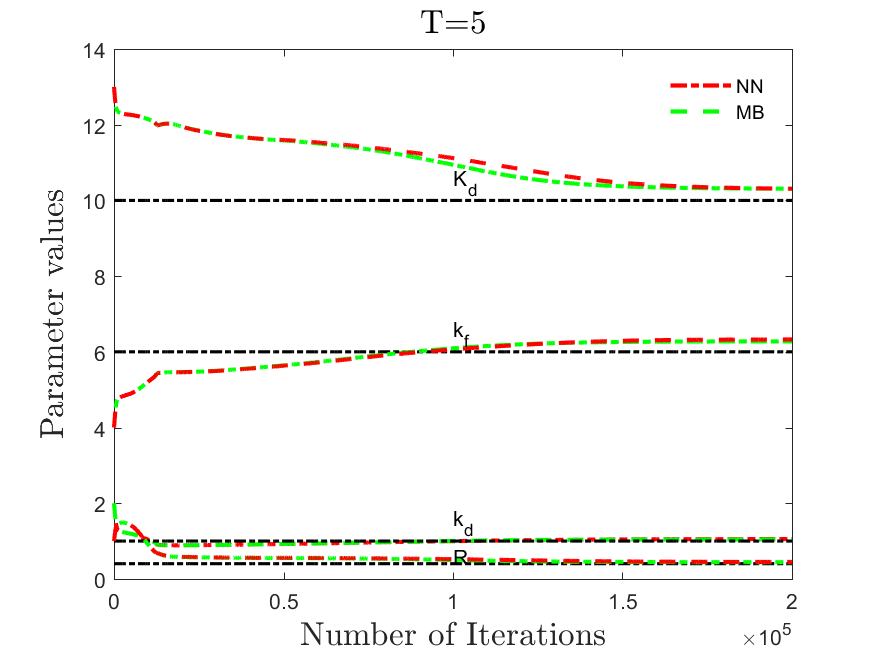}}
\end{minipage}
\hfill
\begin{minipage}[]{0.3 \textwidth}
 \leftline{~~~~~~~\tiny\textbf{(a2)}}
\centerline{\includegraphics[width=4.2cm,height=4.2cm]{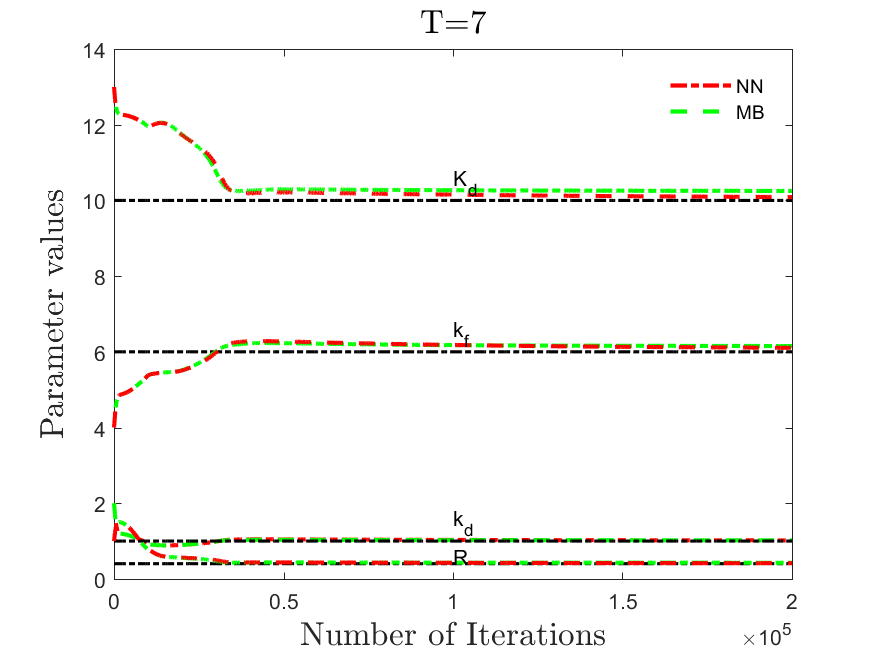}}
\end{minipage}
\hfill
\begin{minipage}[]{0.3 \textwidth}
 \leftline{~~~~~~~\tiny\textbf{(a3)}}
\centerline{\includegraphics[width=4.2cm,height=4.2cm]{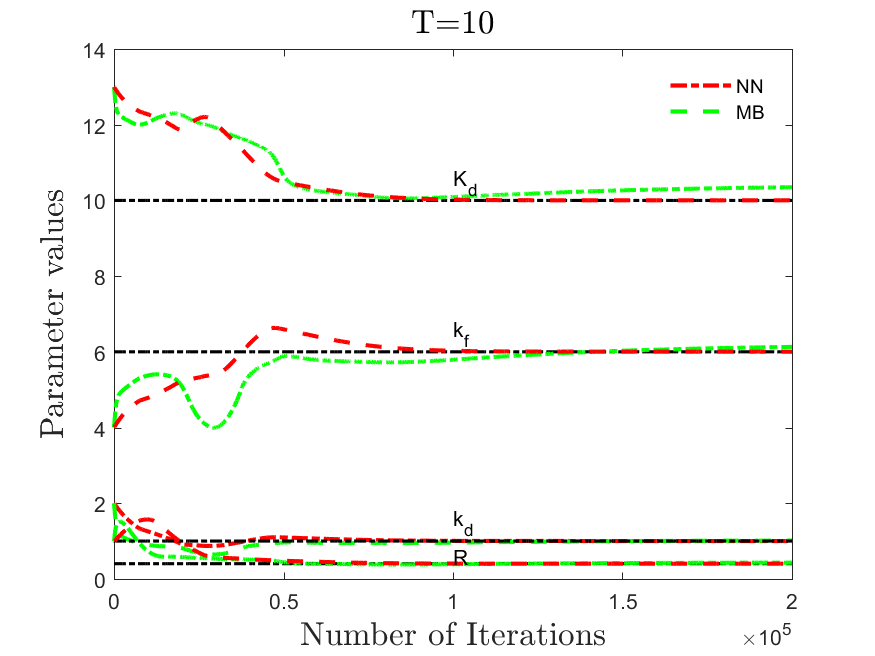}}
\end{minipage}
\begin{minipage}[]{0.3 \textwidth}
 \leftline{~~~~~~~\tiny\textbf{(b1)}}
\centerline{\includegraphics[width=4.2cm,height=4.2cm]{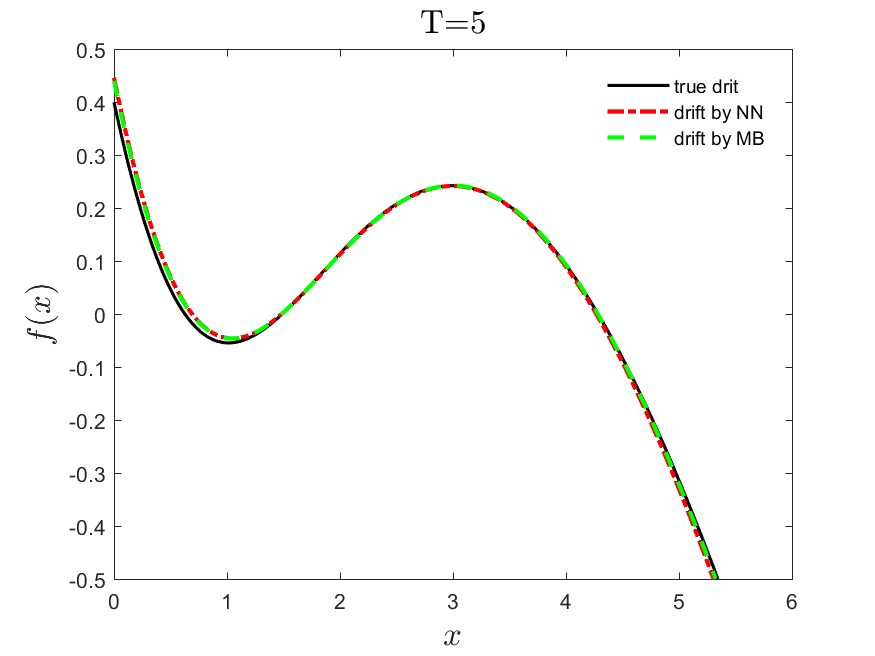}}
\end{minipage}
\hfill
\begin{minipage}[]{0.3 \textwidth}
 \leftline{~~~~~~~\tiny\textbf{(b2)}}
\centerline{\includegraphics[width=4.2cm,height=4.2cm]{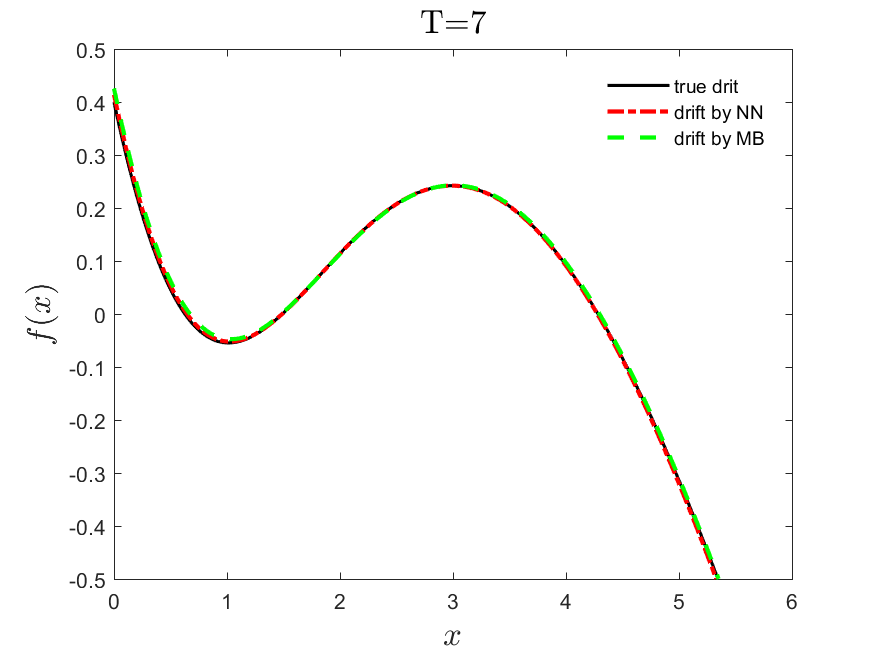}}
\end{minipage}
\hfill
\begin{minipage}[]{0.3 \textwidth}
 \leftline{~~~~~~~\tiny\textbf{(b3)}}
\centerline{\includegraphics[width=4.2cm,height=4.2cm]{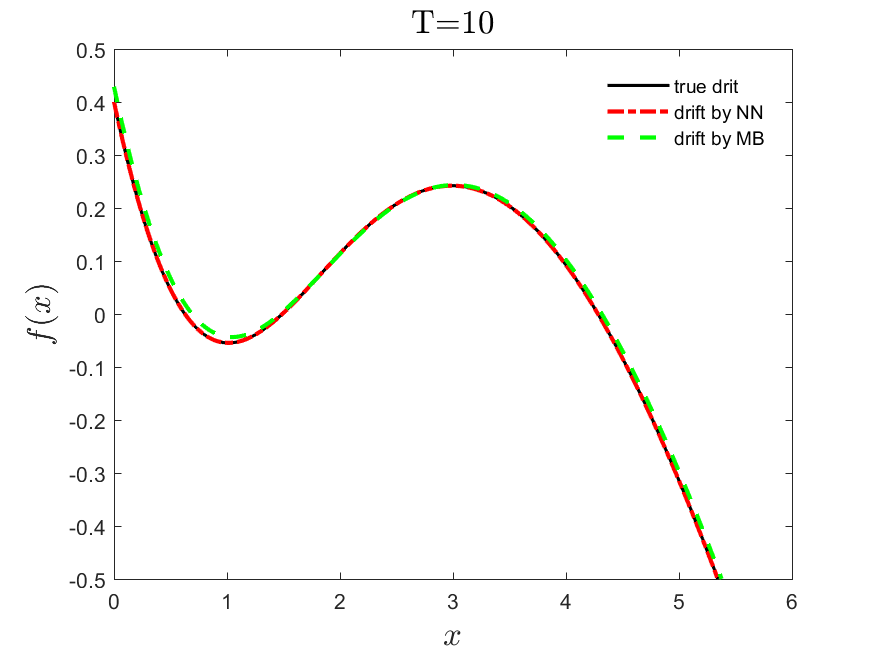}}
\end{minipage}
\caption{\textbf{Parametric estimation in Freidlin-Wentzell case ($\sigma\ll1$) for transition times $T=5,7,10$ - stochastic gene regulation model model:} (a1)-(a3) learned parameters $k_d$, $k_f$, $K_d$ and $R$; (b1)-(b3) learned drift functions. Black curves: true parameters and true drift functions; green curves: learned results for Markovian bridge process observation data; red curves: learned results for neural network observation data.}
\label{1D_bio_fw_11}
\end{figure}

We also do the nonparametric estimation for the drift function from the above two types of observation data for different transition times through optimising the loss function \eqref{loss_unknow} with $\gamma_1=10000$ and $\gamma_2=0$. In this part, we use a fully connected neural network to approximate the drift function. For case I, we have the observation data of the transition path with $N=1001$ and no observation data for the drift function ($N_d=0$). For case II, we  have the observation data of the transition path with $N=1001$ and four observation data of the drift function at $x=0,1.2,3,6$ ($N_d=4$). We consider transition times $T=2,5,7,10$ with two types of observation data; see Fig.\ref{tfa_mptp}(a). 
%The learned drift function is shown in Fig.\ref{1D_bio_unknow_drift}.
Fig.\ref{1D_bio_unknow_drift}(a1)-(a4) present the learned drift function from the neural network observation data and Fig.\ref{1D_bio_unknow_drift}(b1)-(b4) show the learned drift function from the Markovian bridge observation data. In Fig.\ref{1D_bio_unknow_drift}, black curves are the true drift functions, red curves (Case I) are learned drift functions with zero observation data of the drift function, and blue curves (Case II) are learned drift functions with four observation data of the drift function. For case I (red curves), the drift function could be learned well in the domain $[0.62685,4.28343]$, except for the transition time $T=2$. When $T=2$, it is difficult to learn the drift, as shown in Fig.\ref{1D_bio_unknow_drift} (a1), because a small transition time $T$ will give less information about the stochastic dynamical system. For case II (blue curves), all the cases can be learned well, even outside of the domain $[0.62685,4.28343]$. 

\begin{figure}
\begin{minipage}[]{0.2 \textwidth}
 \leftline{~~~~~~~\tiny\textbf{(a1)}}
\centerline{\includegraphics[width=3.4cm,height=3.4cm]{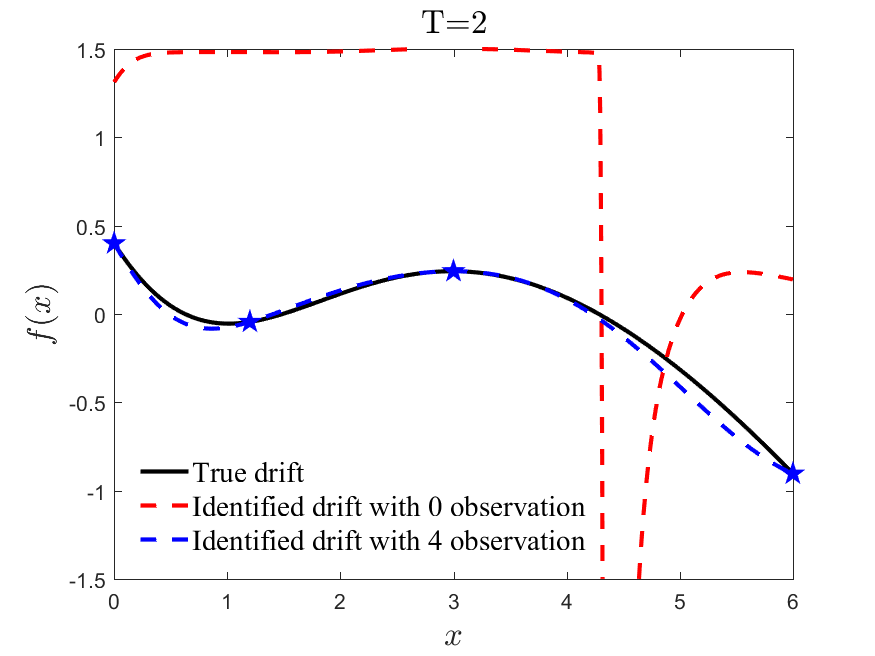}}
\end{minipage}
\hfill
\begin{minipage}[]{0.2 \textwidth}
 \leftline{~~~~~~~\tiny\textbf{(a2)}}
\centerline{\includegraphics[width=3.4cm,height=3.4cm]{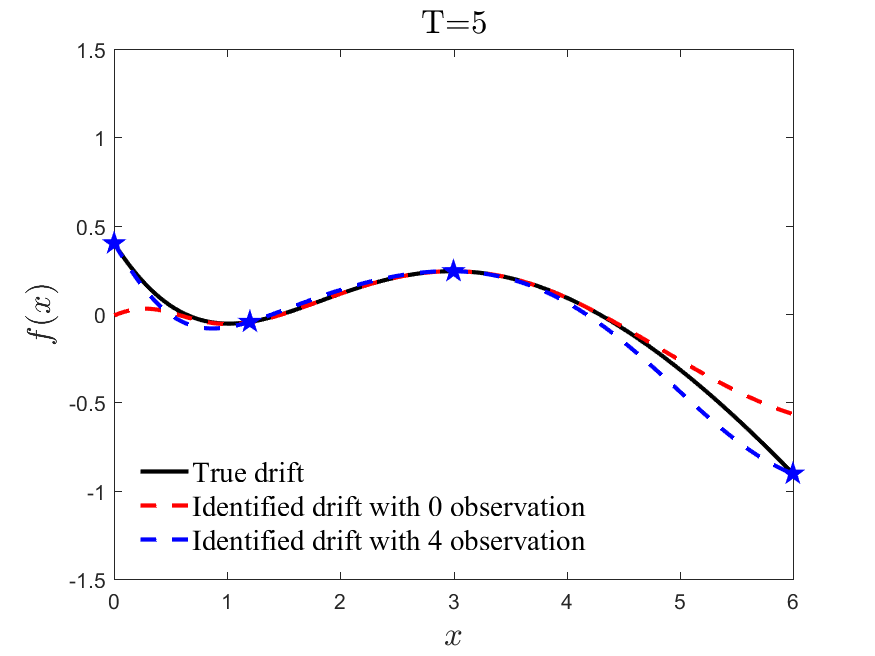}}
\end{minipage}
\hfill
\begin{minipage}[]{0.2 \textwidth}
 \leftline{~~~~~~~\tiny\textbf{(a3)}}
\centerline{\includegraphics[width=3.4cm,height=3.4cm]{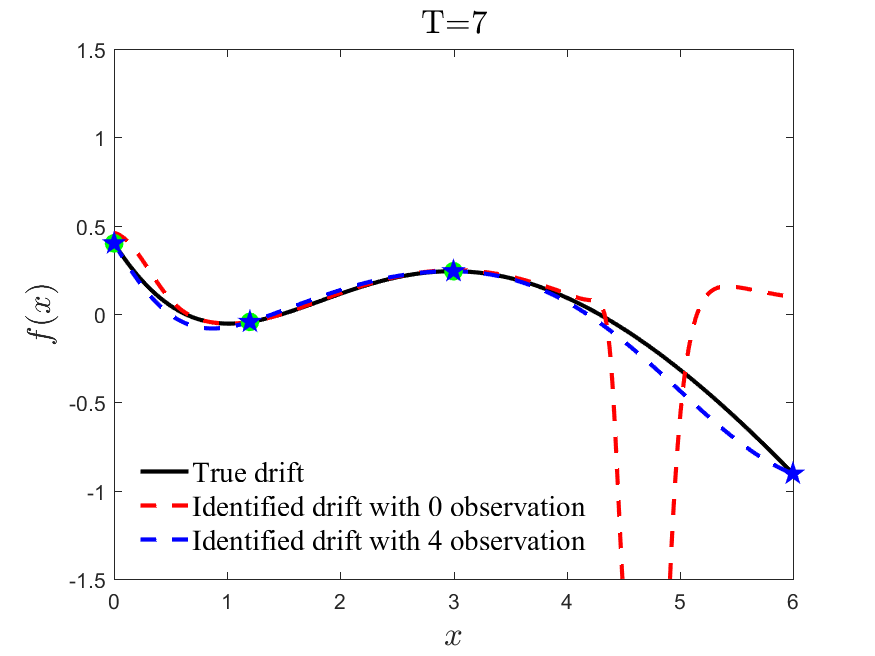}}
\end{minipage}
\hfill
\begin{minipage}[]{0.2 \textwidth}
 \leftline{~~~~~~~\tiny\textbf{(a4)}}
\centerline{\includegraphics[width=3.4cm,height=3.4cm]{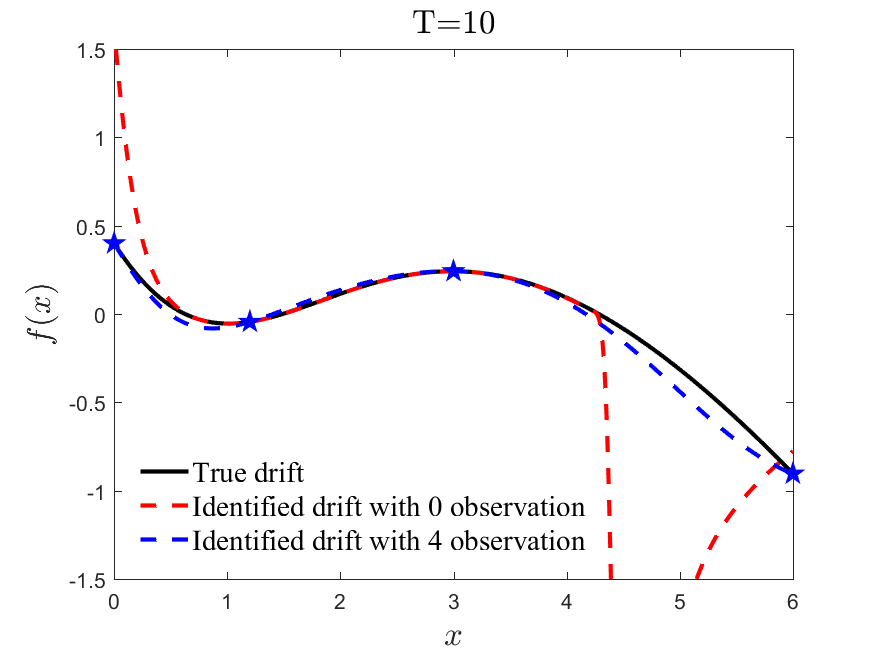}}
\end{minipage}
\hfill
\begin{minipage}[]{0.2 \textwidth}
 \leftline{~~~~~~~\tiny\textbf{(b1)}}
\centerline{\includegraphics[width=3.4cm,height=3.4cm]{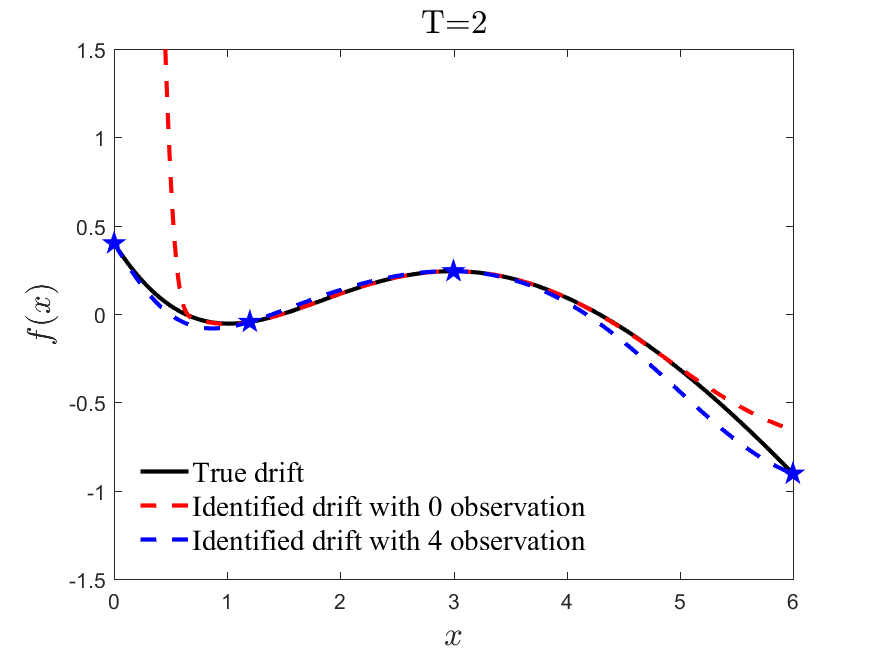}}
\end{minipage}
\hfill
\begin{minipage}[]{0.2 \textwidth}
 \leftline{~~~~~~~\tiny\textbf{(b2)}}
\centerline{\includegraphics[width=3.4cm,height=3.4cm]{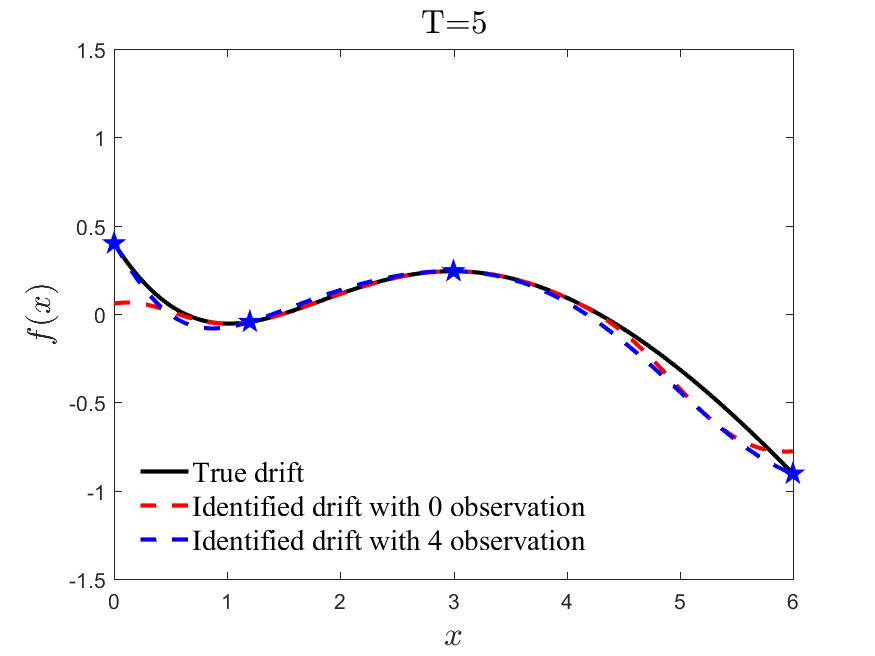}}
\end{minipage}
\hfill
\begin{minipage}[]{0.2 \textwidth}
 \leftline{~~~~~~~\tiny\textbf{(b3)}}
\centerline{\includegraphics[width=3.4cm,height=3.4cm]{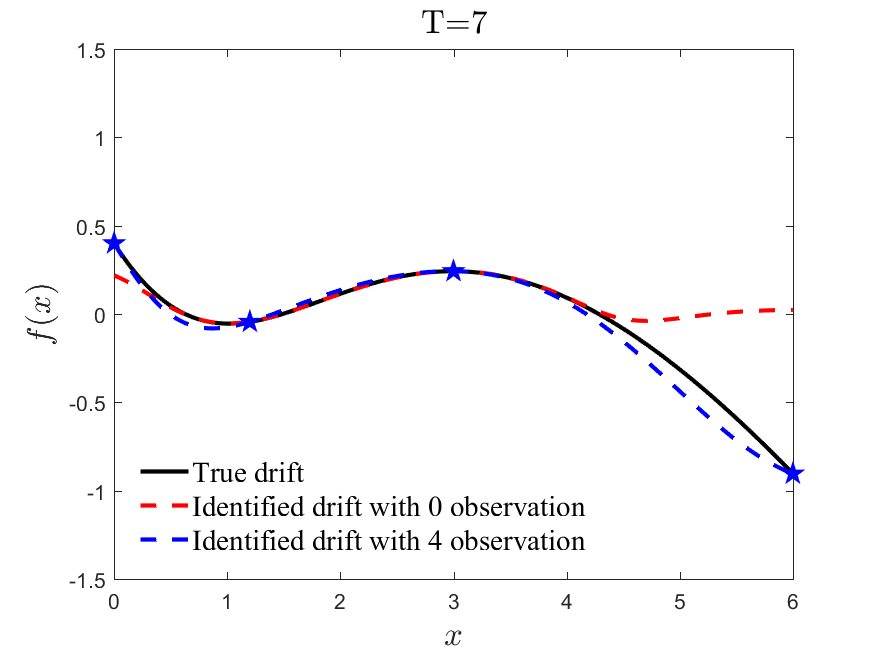}}
\end{minipage}
\hfill
\begin{minipage}[]{0.2 \textwidth}
 \leftline{~~~~~~~\tiny\textbf{(b4)}}
\centerline{\includegraphics[width=3.4cm,height=3.4cm]{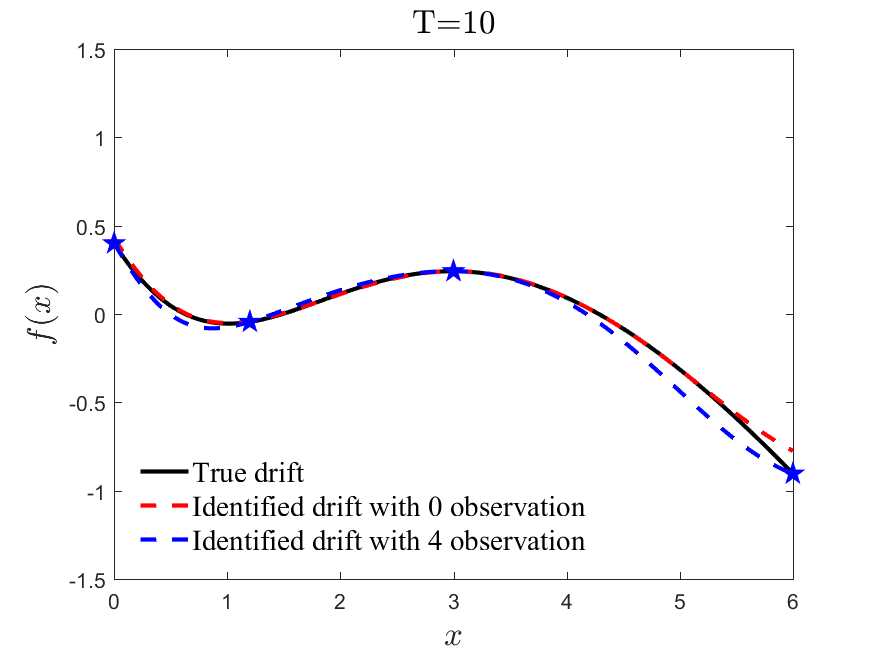}}
\end{minipage}
\caption{\textbf{Non-parametric estimation in Freidlin-Wentzell case ($\sigma\ll 1$) for transition times $T=2,5,7,10$ - stochastic gene regulation model model:} (a) neural network observation data; (b) Markovian bridge observation data. Black curves: true drift function; red curves: learned drift function with zero observation data of drift function; blue curves: learned drift function with four observation data of drift function; blue stars: observation data of drift function.}
\label{1D_bio_unknow_drift}
\end{figure}

\noindent \textbf{Onsager-Machlup framework}

The Onsager-Machlup framework is similar to the Freidlin-Wentzell framework, where the Euler-Lagrange equation is 
\begin{equation}
\begin{split}\label{EL_tfa_om}
\ddot{z}=(\frac{k_fz^2}{z^2+K_d}-k_dz+R_{bas})(\frac{2k_fK_dz}{(z^2+K_d)^2}-k_d)+\sigma^2\frac{k_fK_d(K_d-3z^2)}{(u^2+K_d)^3},
\end{split}
\end{equation}
with two boundary points $z(0)=0.62685$ and $z(T)=4.28343$. Here, we take the noise intensity $\sigma=0.1$.

The most probable transition pathways between $z(0)=0.62685$ and $z(T)=4.28343$ are shown in Fig.\ref{tfa_mptp}(b). Red curves are computed by the neural network to the Euler-Lagrange equation \eqref{EL_tfa_om}, through optimizing the loss function \eqref{emp_loss} with $\lambda_r=1$ and $\lambda_b=1$. Green curves are computed by the Markovian bridge process \eqref{MarBri1}. As shown in Fig.\ref{tfa_mptp}(b), for small transition times, they coincide well.

Suppose the parameters $k_f, K_d, k_d, R_{bas}$ are unknown in \eqref{tfa_pot}. We learn the parameters of the drift function from the observation data through optimising the loss function \eqref{Los_Fun} with $\lambda_d=1$. The observation data are shown in Fig.\ref{tfa_mptp}(b) and we uniformly choose the number of the observation data $N=51$.
Fig.\ref{1D_bio_om_11} shows the results of learned drift functions using the above two types of observation data for transition times $T=5,7,10$. Black curves are true drift functions with true parameters $K_d=10$, $k_f=6$, $k_d=1$ and $R_{bas}=0.6$, red curves are computed by the neural network observation data and green curves are computed by the Markovian bridge process observation data. 
% Fig.\ref{1D_bio_om_11}(a2)-(a3) and (b2)-(b3) present the learned drift and potential functions. 
We were able to effectively recover the dynamical structures for the neural network observation data. But for the Markovian bridge process observation data, the learned drift function has larger errors, even though the observation data are similar. It shows that it is sensitive to learning the parameters for this system in the Onsager-Machlup framework.

\begin{figure}
% \begin{minipage}[]{0.3 \textwidth}
%  \leftline{~~~~~~~\tiny\textbf{(a)}}
% \centerline{\includegraphics[width=4.4cm,height=4.4cm]{}}
% \end{minipage}
% \hfill
% \begin{minipage}[]{0.3 \textwidth}
% \centerline{\includegraphics[width=4.4cm,height=4.4cm]{}}
% \end{minipage}
% \hfill
% \begin{minipage}[]{0.3 \textwidth}
% \centerline{\includegraphics[width=4.4cm,height=4.4cm]{}}
% \end{minipage}
\begin{minipage}[]{0.3 \textwidth}
 \leftline{~~~~~~~\tiny\textbf{(a1)}}
\centerline{\includegraphics[width=4.2cm,height=4.2cm]{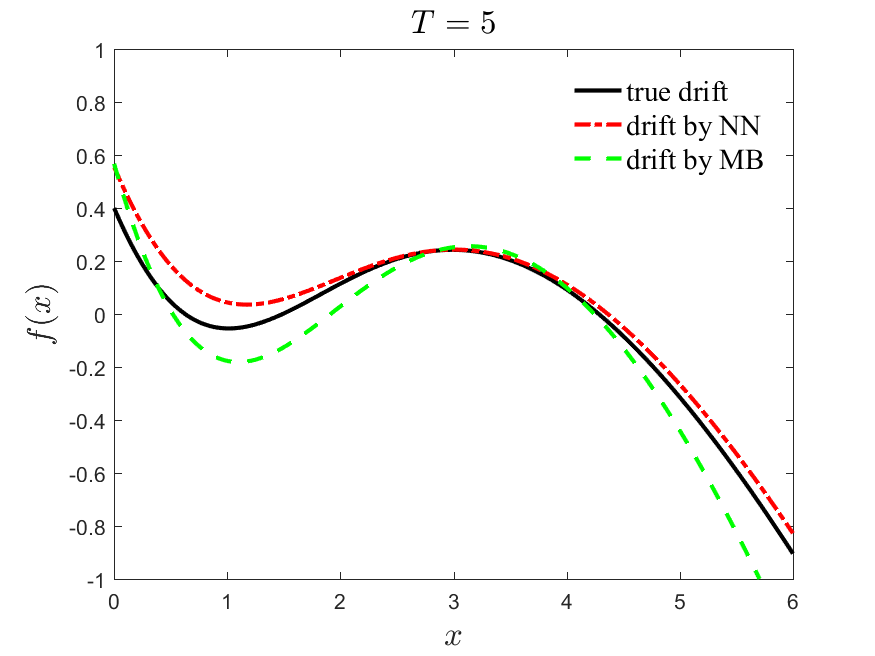}}
\end{minipage}
\hfill
\begin{minipage}[]{0.3 \textwidth}
 \leftline{~~~~~~~\tiny\textbf{(a2)}}
\centerline{\includegraphics[width=4.2cm,height=4.2cm]{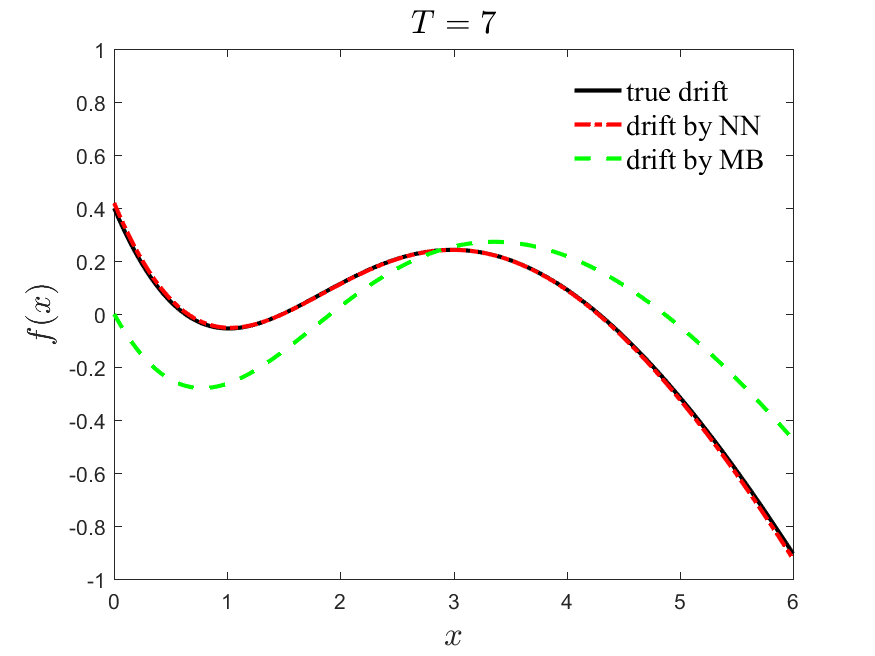}}
\end{minipage}
\hfill
\begin{minipage}[]{0.3 \textwidth}
 \leftline{~~~~~~~\tiny\textbf{(a3)}}
\centerline{\includegraphics[width=4.2cm,height=4.2cm]{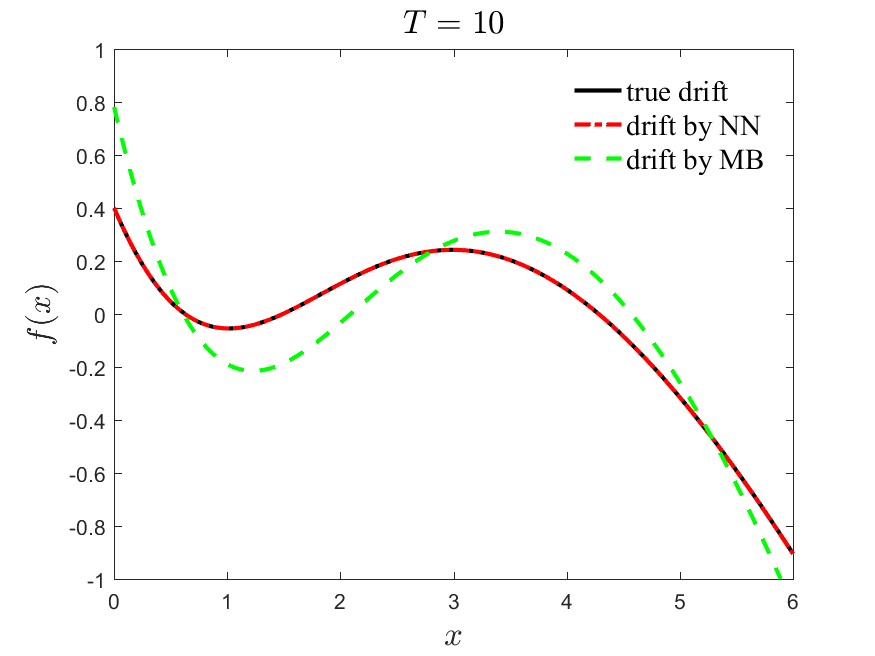}}
\end{minipage}
% \begin{minipage}[]{0.3 \textwidth}
%  \leftline{~~~~~~~\tiny\textbf{(b1)}}
% \centerline{\includegraphics[width=4.2cm,height=4.2cm]{}}
% \end{minipage}
% \hfill
% \begin{minipage}[]{0.3 \textwidth}
%  \leftline{~~~~~~~\tiny\textbf{(b2)}}
% \centerline{\includegraphics[width=4.2cm,height=4.2cm]{}}
% \end{minipage}
% \hfill
% \begin{minipage}[]{0.3 \textwidth}
%  \leftline{~~~~~~~\tiny\textbf{(b3)}}
% \centerline{\includegraphics[width=4.2cm,height=4.2cm]{}}
% \end{minipage}
\caption{\textbf{Parametric estimation in Onsager-Machlup case ($\sigma=0.1$) - stochastic gene regulation model:} learned drift function. (a1) $T=5$; (a2) $T=7$; (a3) $T=10$. Black curves: true drift functions; green curves: learned drift functions for Markovian bridge process observation data; red curves: learned drift functions for neural network observation data.}
\label{1D_bio_om_11}
\end{figure}

We also investigate the nonparametric estimation of the drift function from two types of observation data for different transition times through optimising the loss function \eqref{loss_unknow} with $\gamma_1=10000$. We use a fully connected neural network to approximate the drift function. For case I, we have the observation data of the transition path, but the loss function has no regulization ($N_d$=0 and $\gamma_2=0$). For case II, we have the observation data of the transition path and add the regulization to the loss function ($N_d$=0 and $\gamma_2=10^{-5}$). For case III, we have the observation data of the transition paths and four observation data of the drift function at $x=0,1.2,3,6$ ($N_d$=4 and $\gamma_2=0$). And we consider transition times $T=2,5,7,10$. 
Fig.\ref{1D_unknow_OM_BI}(a1)-(a4) present the learned drift function from the neural network observation data and Fig.\ref{1D_unknow_OM_BI}(b1)-(b4) show the learned drift function from the Markovian bridge observation data. Black curves are true drift functions, red curves (Case I) are learned drift functions with zero observation data of drift function and no regulization in the loss function, green curves (Case II) are learned drift functions with zero observation data of drift function with regulization, and blue curves (Case III) are learned drift functions with four observation data of drift function.
For case I (red curves), the drift functions are learned well for neural network observation data in the domain $[0.62685,4.28343]$, while for Markovian bridge observation data, the drift functions could not be learned well. Then we add the regulization in the loss function to learn the drift. The drift function could be learned well for both observation data in the domain $[0.62685,4.28343]$, except for Markovian bridge observation data with transition time $T=2$ as shown in Fig.\ref{1D_unknow_OM_BI}(b1). 
% When $T=2$, it is difficult to learn the drift as shown in Fig.\ref{1D_bio_unknow_drift} (a1), because small transition time $T$ will give less information of the stochastic dynamical system.
For case III (blue curves), all the cases can be learned well, even outside of the domain $[0.62685,4.28343]$. The results show the effectiveness of the regulization.

\begin{figure}
\begin{minipage}[]{0.2 \textwidth}
 \leftline{~~~~~~~\tiny\textbf{(a1)}}
\centerline{\includegraphics[width=3.4cm,height=3.4cm]{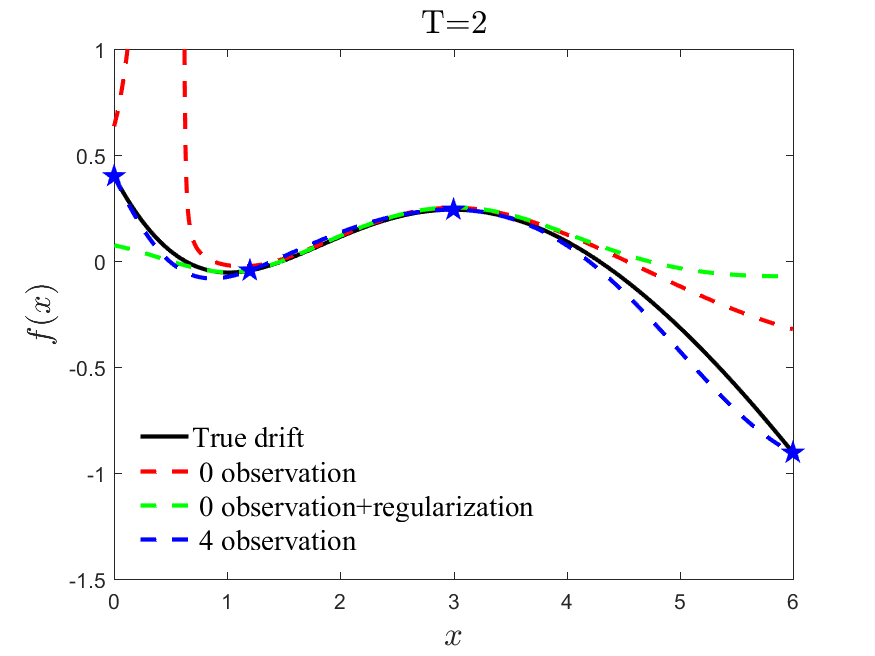}}
\end{minipage}
\hfill
\begin{minipage}[]{0.2 \textwidth}
 \leftline{~~~~~~~\tiny\textbf{(a2)}}
\centerline{\includegraphics[width=3.4cm,height=3.4cm]{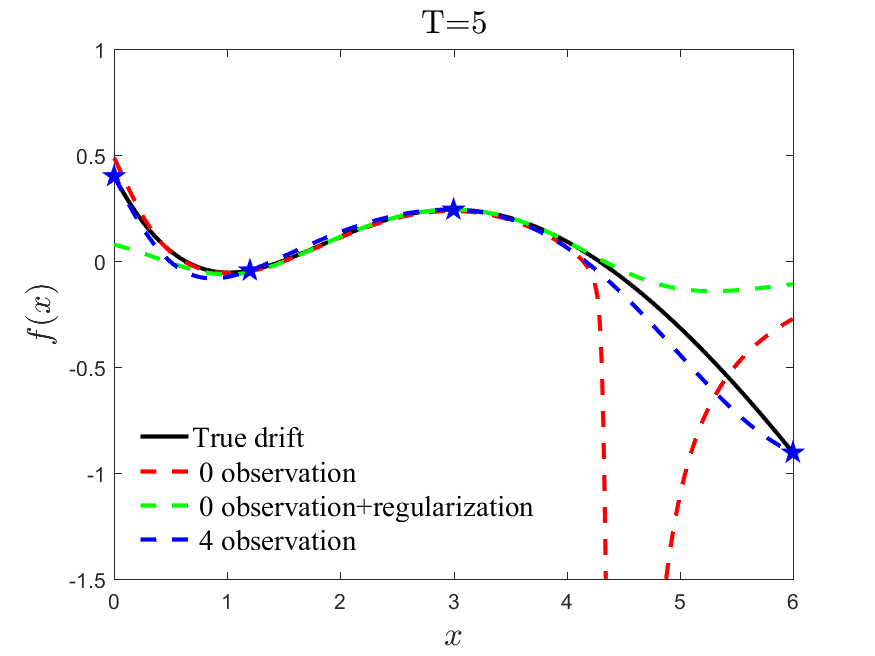}}
\end{minipage}
\hfill
\begin{minipage}[]{0.2 \textwidth}
 \leftline{~~~~~~~\tiny\textbf{(a3)}}
\centerline{\includegraphics[width=3.4cm,height=3.4cm]{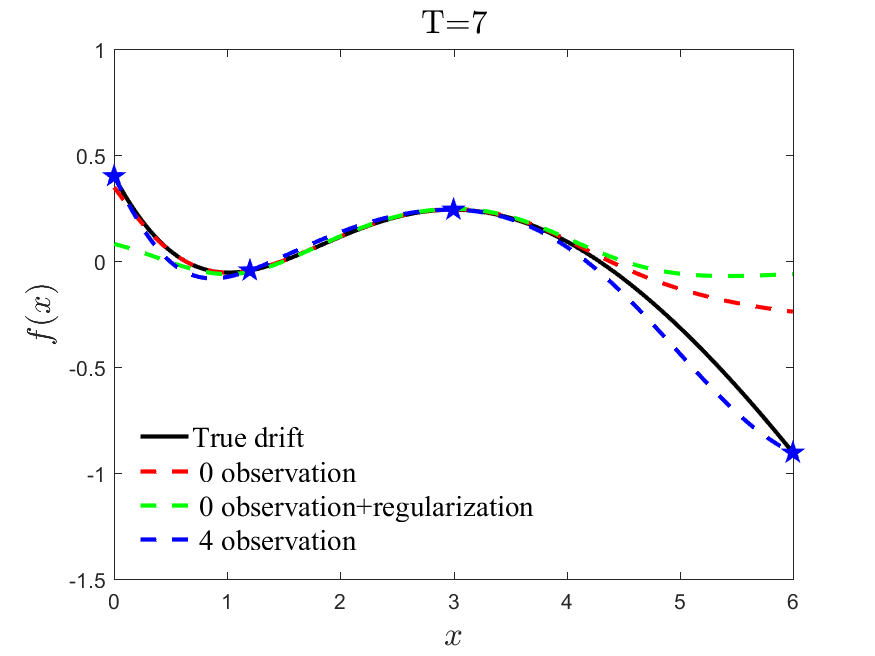}}
\end{minipage}
\hfill
\begin{minipage}[]{0.2 \textwidth}
 \leftline{~~~~~~~\tiny\textbf{(a4)}}
\centerline{\includegraphics[width=3.4cm,height=3.4cm]{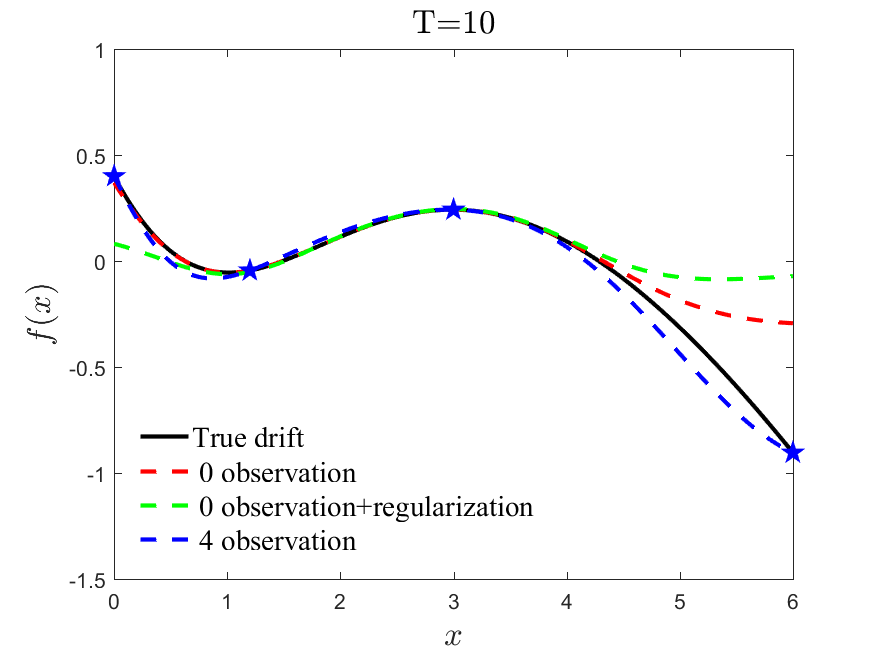}}
\end{minipage}
\hfill
\begin{minipage}[]{0.2 \textwidth}
 \leftline{~~~~~~~\tiny\textbf{(b1)}}
\centerline{\includegraphics[width=3.4cm,height=3.4cm]{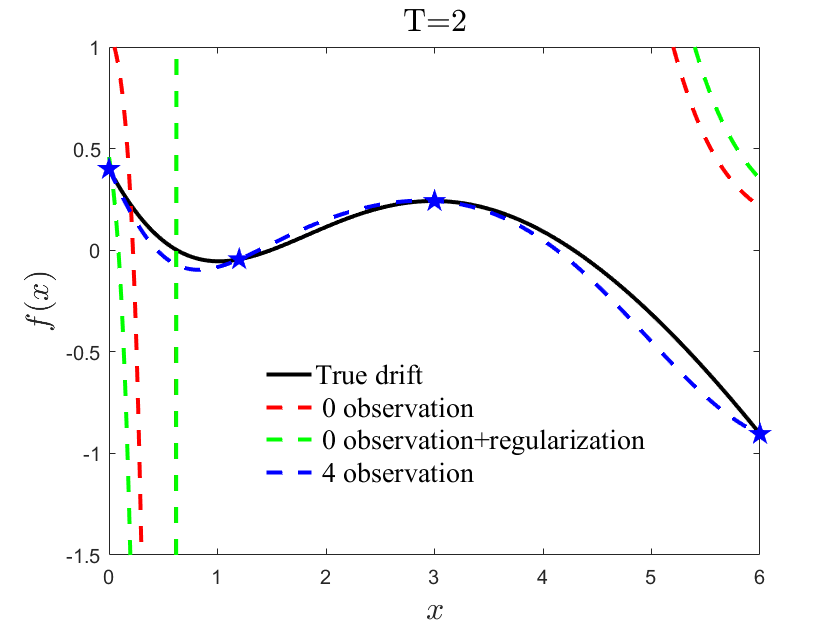}}
\end{minipage}
\hfill
\begin{minipage}[]{0.2 \textwidth}
 \leftline{~~~~~~~\tiny\textbf{(b2)}}
\centerline{\includegraphics[width=3.4cm,height=3.4cm]{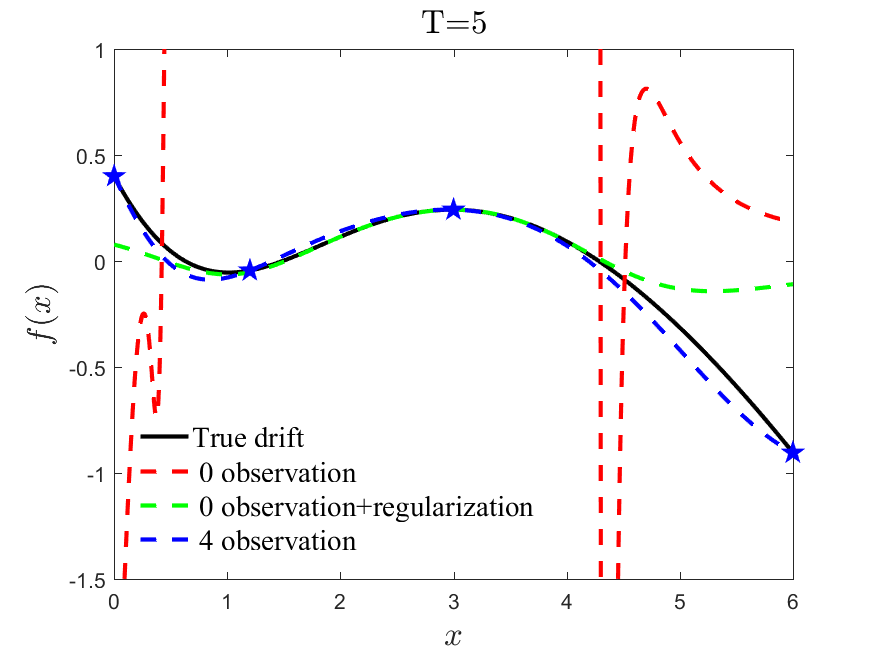}}
\end{minipage}
\hfill
\begin{minipage}[]{0.2 \textwidth}
 \leftline{~~~~~~~\tiny\textbf{(b3)}}
\centerline{\includegraphics[width=3.4cm,height=3.4cm]{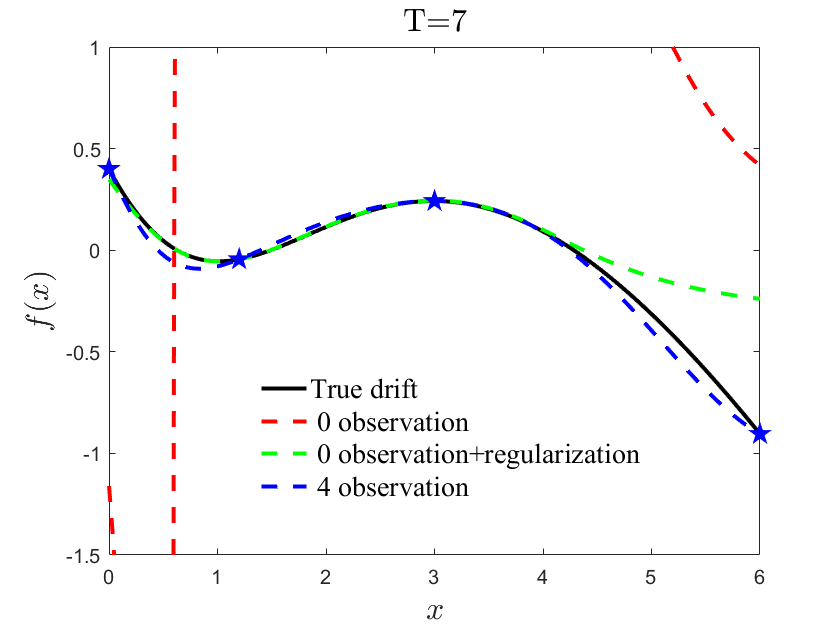}}
\end{minipage}
\hfill
\begin{minipage}[]{0.2 \textwidth}
 \leftline{~~~~~~~\tiny\textbf{(b4)}}
\centerline{\includegraphics[width=3.4cm,height=3.4cm]{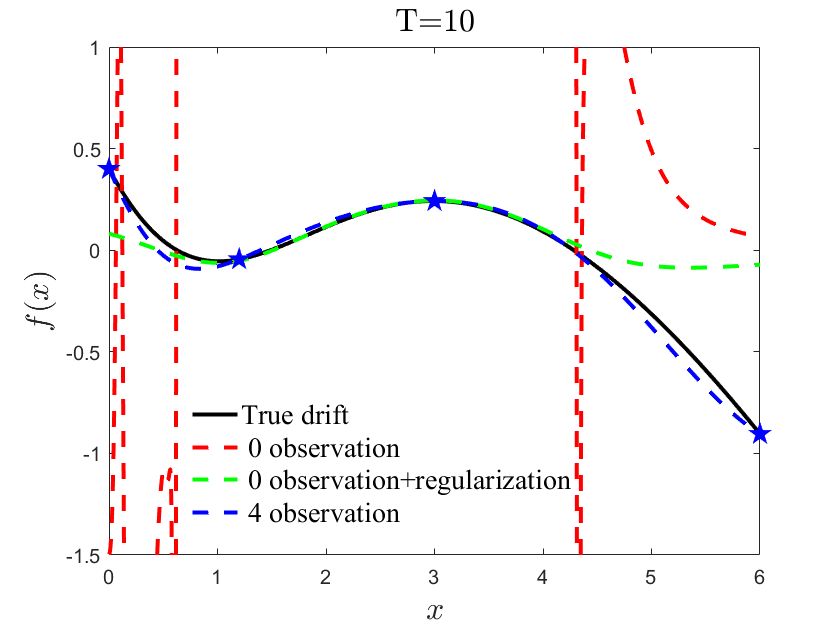} }
\end{minipage}
\caption{\textbf{Non-parametric estimation in Onsager-Machlup case ($\sigma=0.1$) for transition times $T=2,5,7,10$ - stochastic gene regulation model:} (a) neural network observation data; (b) Markovian bridge observation data. Black curves: true drift; red curves: learned drift function with zero observation data of the drift function and no regulization; green curves: learned drift function with zero observation data of the drift function and regulization; blue curves: learned drift function with four observation data of the drift function; blue stars: observation data of the drift function.}
\label{1D_unknow_OM_BI}
\end{figure}

% \begin{figure}
% \begin{minipage}[]{0.48 \textwidth}
%  \leftline{~~~~~~~\tiny\textbf{(a)}}
% \centerline{\includegraphics[width=5cm,height=5cm]{MEKS_result/MPTP.png}}
% \end{minipage}
% \hfill
% \begin{minipage}[]{0.48 \textwidth}
%  \leftline{~~~~~~~\tiny\textbf{(b)}}
% \centerline{\includegraphics[width=5cm,height=5cm]{MEKS_result/meks_T10_fw_ob50_para.png}}
% \end{minipage}
% \caption{\textbf{MEKS result:}  (a) The most probable transition path for transition time T=10; (b) learned bifurcation parameters $b_u$ and $b_v$ in drift vector field with 50 observations.}
% \label{MEKS_MPTP}
% \end{figure}

\subsection{Stochastic Maier-Stein system}
%Overdamped systems without detailed balance arise in the study of disordered materials, chemical reactions far from equilibrium, and theoretical ecology.
%As a typical example of the overdamped systems, the Maier-Stein system has been chosen to illustrate our results. 
Consider the following stochastic Maier\!-Stein system \cite{maier1993escape,maier1996scaling,wei2022optimal}
\begin{equation}
\begin{split}
dx&=(x-x^3-\gamma xy^2)dt+\sigma dW^1(t),\\
dy&=-(1+x^2)ydt+\sigma dW^2(t),
\end{split}
\end{equation}
where $\gamma$ is a positive parameter, $W^1(t)$ and $W^2(t)$ are two independent Brownian motions, and $\sigma$ is the noise intensity.

Note that there exist two stable nodes $(-1,0)$ and $(1,0)$ and one unstable node (0,0) for the corresponding deterministic system. Denote the drift function $f(x,y)=(f^1(x,y),f^2(x,y))^T=(x-x^3-\gamma xy^2,-(1+x^2)y)^T$. It is known that the gradient matrix $Df$ is symmetric if and only if $\gamma=1$. In this case, the potential function of the Maier-Stein system is
\begin{equation}
\begin{split}
V(x,y)=-\frac{\lambda_1}{2}x^2-\frac{\lambda_2}{4}x^4-\frac{\lambda_3}{2}x^2y^2-\frac{\lambda_4}{2}y^2,
\end{split}
\end{equation}
where $\lambda_1=1$, $\lambda_2=\lambda_3=\lambda_4=-1$. \textcolor{red}{For various parameters, the dynamical behavior of the Maier-Stein system will differ depending on $\lambda_i$. If the parameter changes, a bifurcation occurs in this system \cite{maier1996scaling}. So in the following computing, we will infer the parameters $\lambda_i$,} $i=1,2,3,4$ with transition time $T=10$. \textcolor{red}{The neural network has 2 hidden layers and 20 neurons in each layer. }

The Euler-Lagrange equation for $z=(x, y)$ corresponding to the Onsager-Machlup action functional reads
\begin{equation}
\begin{split}\label{2d_EL_om}
\ddot{x}&=\dot{y}\left(\partial_y f^1- \partial_x f^2\right)+f^1 \partial_{x} f^1+ f^2 \partial_{x} f^2+ \frac{\sigma^2}{2}\partial_{x} \partial_x f^1+ \frac{\sigma^2}{2}\partial_x \partial_y f^2, \\
\ddot{y}&=\dot{x}\left(\partial_{x} f^2- \partial y f^1\right)+ f^1 \partial_y f^1+f^2 \partial_{y} f^2+ \frac{\sigma^2}{2}\partial_y \partial_x f^1+ \frac{\sigma^2}{2}\partial_y \partial_y f^2 .
\end{split}
\end{equation}

The Euler-Lagrange equation for $z=(x, y)$ corresponding to the Freidlin-Wentzell action functional reads
\begin{equation}
\begin{split}\label{2d_EL_fw}
\ddot{x}&=\dot{y}\left(\partial_y f^1- \partial_x f^2\right)+f^1 \partial_{x} f^1+ f^2 \partial_{x} f^2, \\
\ddot{y}&=\dot{x}\left(\partial_{x} f^2- \partial y f^1\right)+f^1 \partial_y f^1+f^2 \partial_{y} f^2.
\end{split}
\end{equation}

\noindent \textbf{Freidlin-Wentzell framework}

In this part, we learn the parameters in the drift function using the observation data of the most probable transition pathway. The observation data are given at $\{u(t_i)\}_{i=0}^{51}$, where $t_i=0.2i$ and $i=0,1,2 \cdots 50$.

In the first case, we infer the parameters with a transition path between two metastable states of the system, i.e., from $(-1,0)$ to $(1,0)$. On the left of Fig.\ref{2D_che_FW_stable}(a), the black curve is the true most probable transition pathway computed by the forward problem of the Euler-Lagrange equation. And for the inverse problem, the projection drawing is the learned potential function with 51 observation data points (green stars), and the red curve is the learned most probable transition pathway. We can learn the probable transition pathway well. And we find that the most probable transition pathway in $y$ direction is almost $0$. The error of the potential function is shown in the middle of Fig.\ref{2D_che_FW_stable}(a). The learned parameters are shown in the right of the Fig.\ref{2D_che_FW_stable}(a). The parameters $\lambda_1$ and $\lambda_2$ can be learned very well, while the parameters $\lambda_3$ and $\lambda_4$ cannot be learned well. This is because the observation data have no information in $y$ direction and $\lambda_3$ and $\lambda_4$ are the coefficients of $y$. To show this judgement, we then only learn two parameters $\lambda_1$ and $\lambda_2$. The results are shown in Fig.\ref{2D_che_FW_stable}(b). We can see the error of the potential is very small, as shown in the middle of Fig.\ref{2D_che_FW_stable}(b). The parameters $\lambda_1$ and $\lambda_2$ would converge to the true parameters after $50,000$ iteration step.

% \begin{figure}
% \begin{minipage}[]{0.48 \textwidth}
%  \leftline{~~~~~~~\tiny\textbf{(a)}}
% \centerline{\includegraphics[width=5cm,height=5cm]{}}
% \end{minipage}
% \hfill
% \begin{minipage}[]{0.48 \textwidth}
%  \leftline{~~~~~~~\tiny\textbf{(b)}}
% \centerline{\includegraphics[width=5cm,height=5cm]{}}
% \end{minipage}
% \caption{\textbf{MS result:}  (a) The most probable transition path for transition time T=10; (b) learned parameters in drift vector field with 50 observations.}
% \label{MS_MPTP}
% \end{figure}
\begin{figure}
\begin{minipage}[]{0.3 \textwidth}
 \leftline{~~~~~~~\tiny\textbf{(a)}}
\centerline{\includegraphics[width=4.2cm,height=4.2cm]{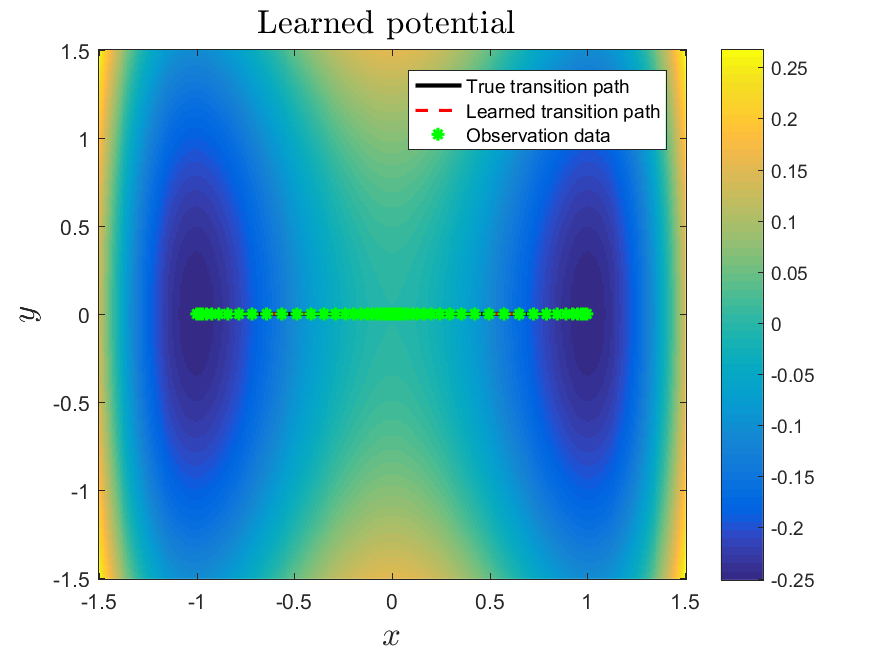}}
\end{minipage}
\hfill
\begin{minipage}[]{0.3 \textwidth}
\centerline{\includegraphics[width=4.2cm,height=4.2cm]{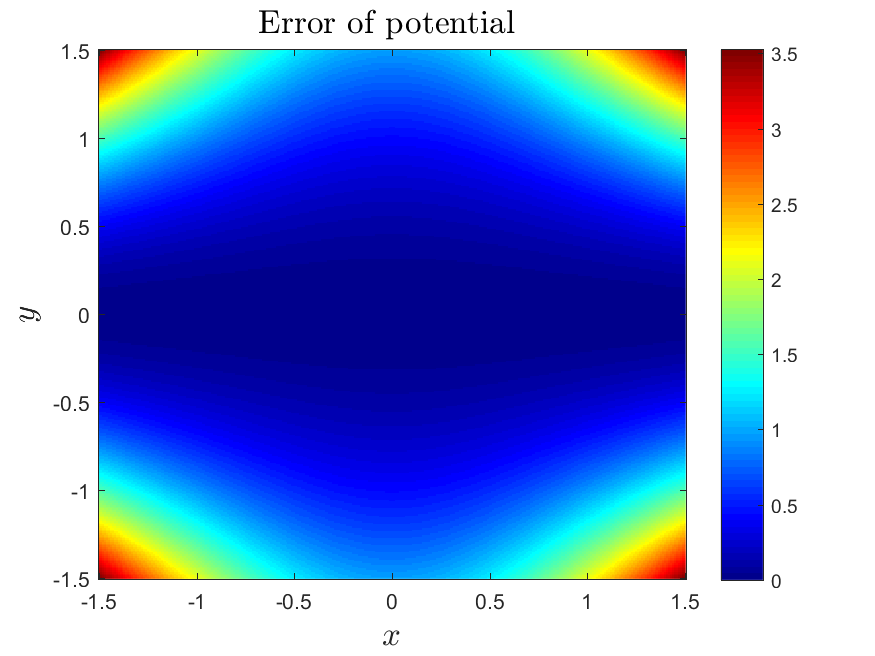}}
\end{minipage}
\hfill
\begin{minipage}[]{0.3 \textwidth}
\centerline{\includegraphics[width=4.2cm,height=4.2cm]{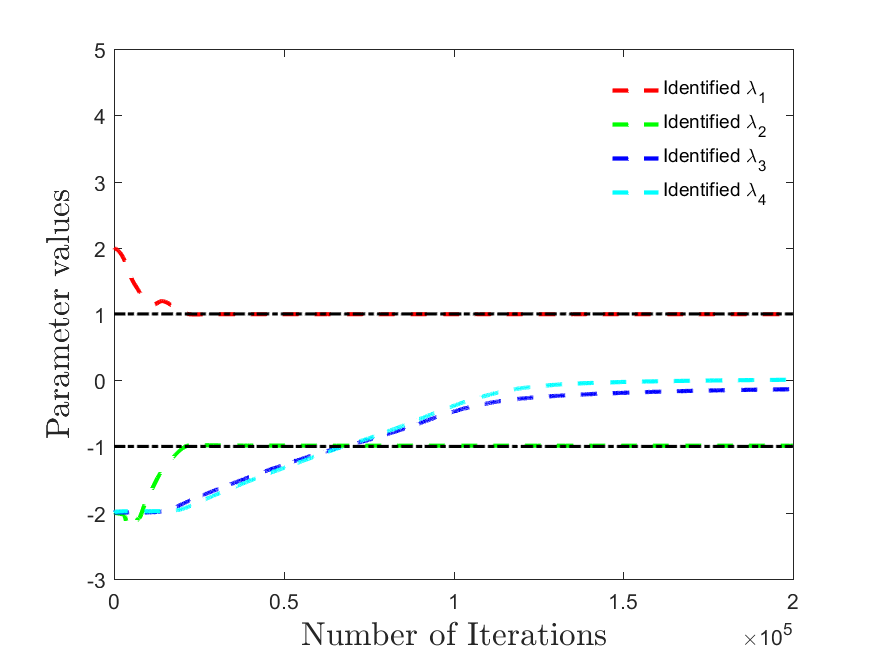}}
\end{minipage}
\begin{minipage}[]{0.3 \textwidth}
 \leftline{~~~~~~~\tiny\textbf{(b)}}
\centerline{\includegraphics[width=4.2cm,height=4.2cm]{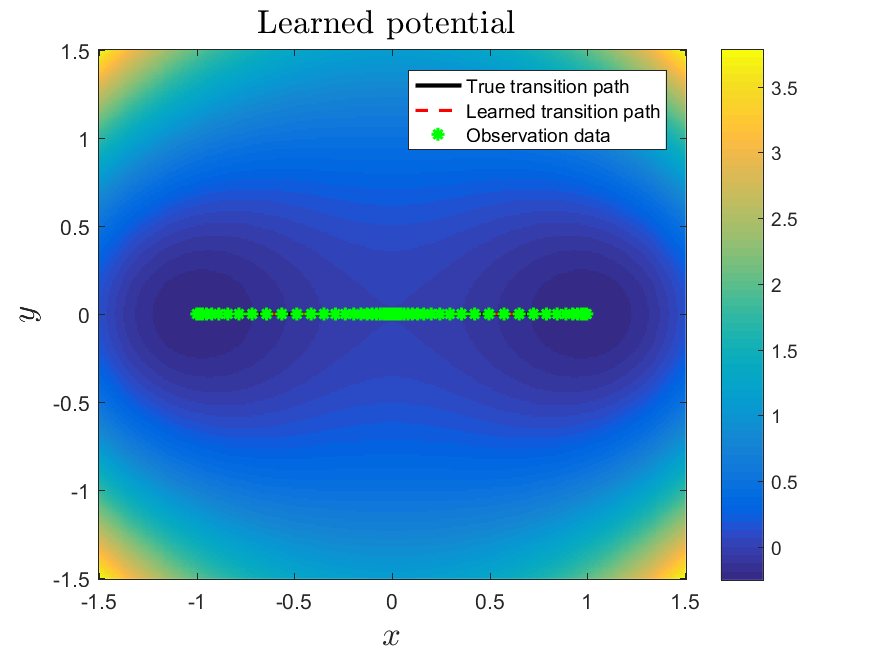}}
\end{minipage}
\hfill
\begin{minipage}[]{0.3 \textwidth}
\centerline{\includegraphics[width=4.2cm,height=4.2cm]{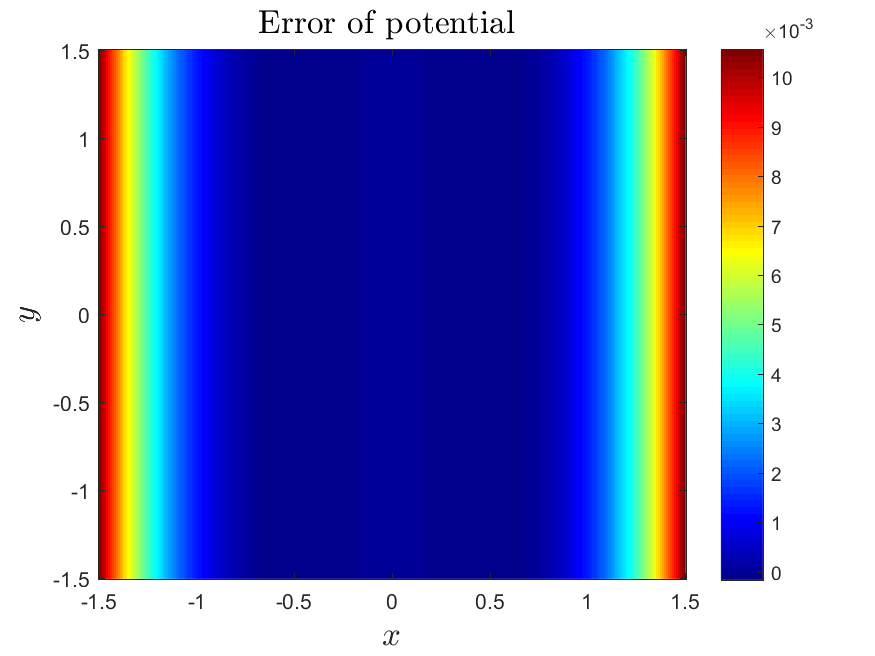}}
\end{minipage}
\hfill
\begin{minipage}[]{0.3 \textwidth}
\centerline{\includegraphics[width=4.2cm,height=4.2cm]{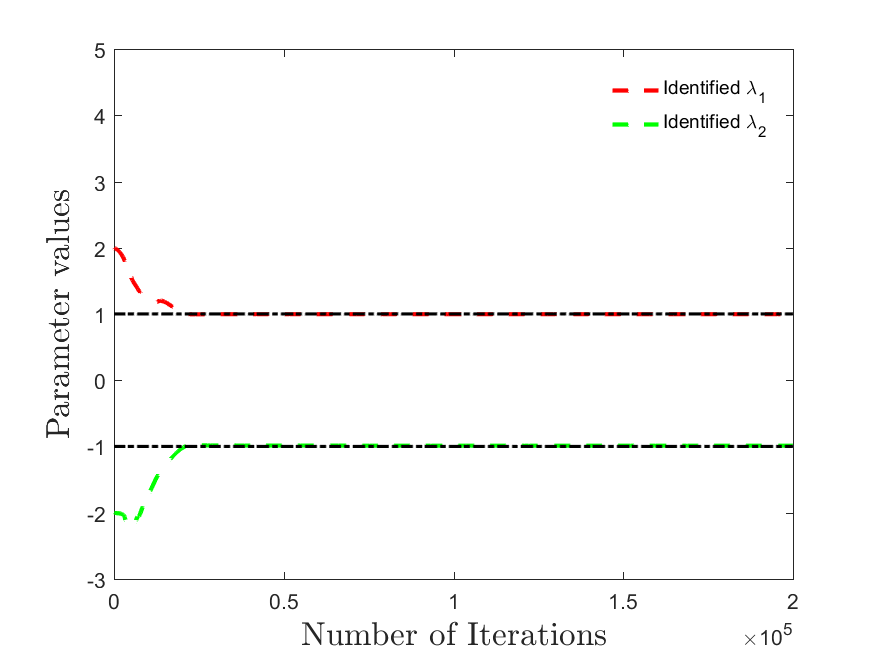}}
\end{minipage}
\caption{\textbf{Parametric estimation for transition time $T=10$ in Freidlin-Wentzell case ($\sigma \ll 1$) - stochastic Maier-Stein model:} (a) learned parameters $\lambda_1$, $\lambda_2$, $\lambda_3$ and $\lambda_4$; (b) learned parameters $\lambda_1$ and $\lambda_2$.  Left: learned most probable transition pathway and potential function; Middle: error of potential; Right: learned parameters.}
\label{2D_che_FW_stable}
\end{figure}

Then we want to explore whether we could learn the system well from the observation data, which has some information in $y$ direction. We consider that the transition path is from $(-1,-1)$ to $(0,0)$, as shown in the black curves in the below of Fig.\ref{2D_che_FW}. We also investigate noisy observation data $u_{ob}=u_{true}(1+\eta N(0,1))$ (the green stars in the below of Fig.\ref{2D_che_FW} ), where $\eta=2\%,5\%, 10\%$ and $N(0,1)$ is normal distribution. We learn the parameters in the potential function and most probable transition pathway with the observation data (green stars in Fig.\ref{2D_che_FW}(b)). Fig.\ref{2D_che_FW}(a1)-(a4) present the learned parameters in each case and Fig.\ref{2D_che_FW}(b1)-(b4) show the learned potential function and most transition pathway (red curves). Comparing with the true most transition pathways (black curves in Fig.\ref{2D_che_FW}(b)), we can learn the most transition pathways well.
%Black curves are true most probable transition pathways computed by the forward problem of the Euler-Lagrange equation and green stars are the observation data.
The error of potential is small with the clean or true observation data. The parameters can be also learned well even for noisy observation data. While the error would be larger with larger noise observation data, as shown in Fig.\ref{2D_che_FW}(b2)-(b3). If the observation data had information in both x and y directions, we could learn all the parameters well.

\begin{figure}
\begin{minipage}[]{0.2 \textwidth}
 \leftline{~~~~~~~\tiny\textbf{(a1)}}
\centerline{\includegraphics[width=3.4cm,height=3.4cm]{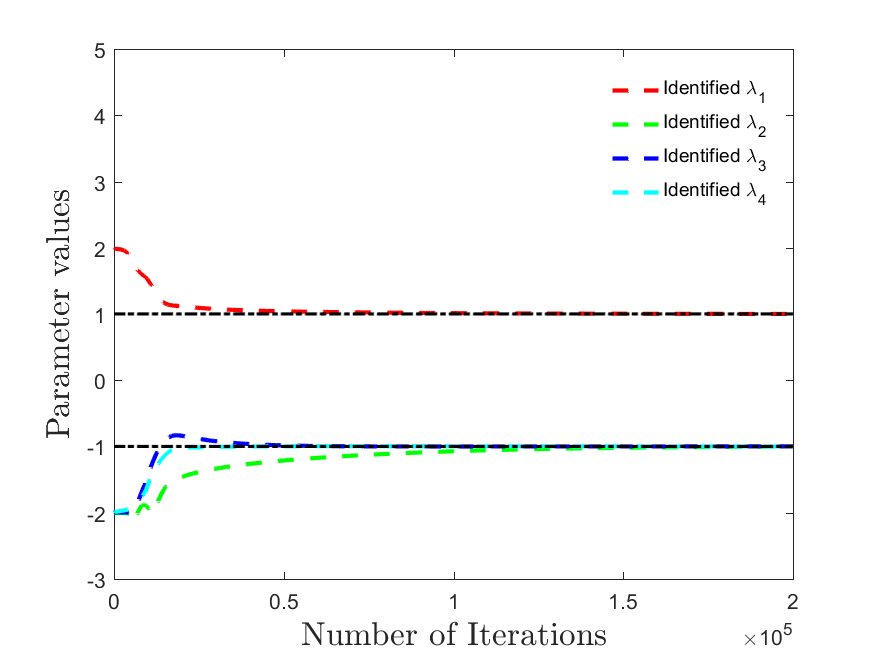}}
\end{minipage}
\hfill
\begin{minipage}[]{0.2 \textwidth}
 \leftline{~~~~~~~\tiny\textbf{(a2)}}
\centerline{\includegraphics[width=3.4cm,height=3.4cm]{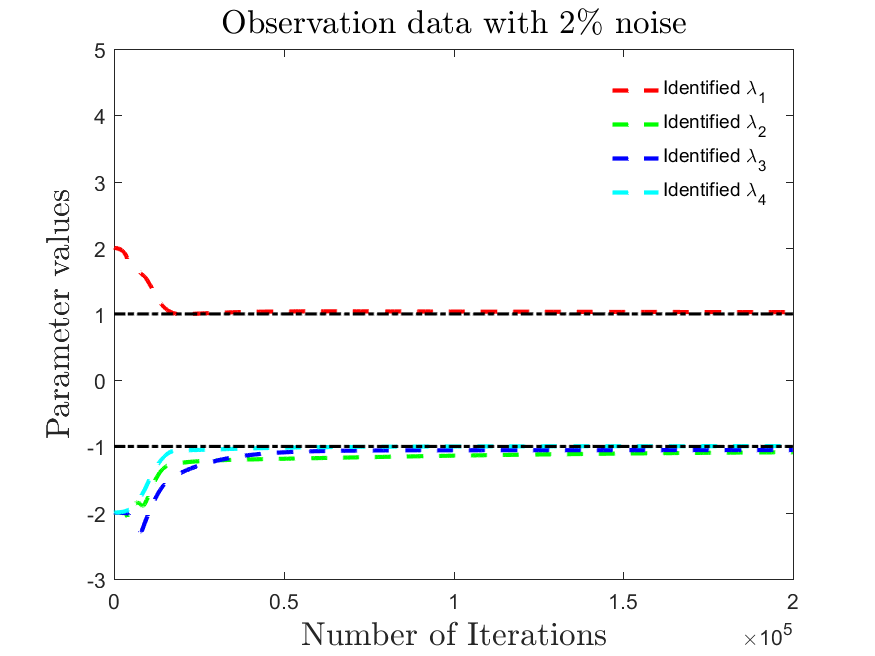}}
\end{minipage}
\hfill
\begin{minipage}[]{0.2 \textwidth}
 \leftline{~~~~~~~\tiny\textbf{(a3)}}
\centerline{\includegraphics[width=3.4cm,height=3.4cm]{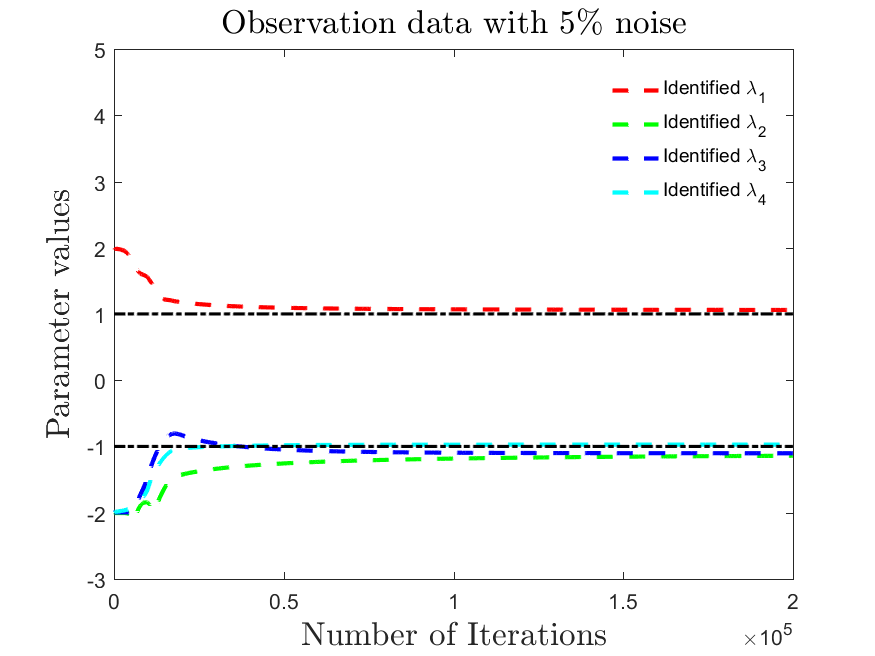}}
\end{minipage}
\hfill
\begin{minipage}[]{0.2 \textwidth}
 \leftline{~~~~~~~\tiny\textbf{(a4)}}
\centerline{\includegraphics[width=3.4cm,height=3.4cm]{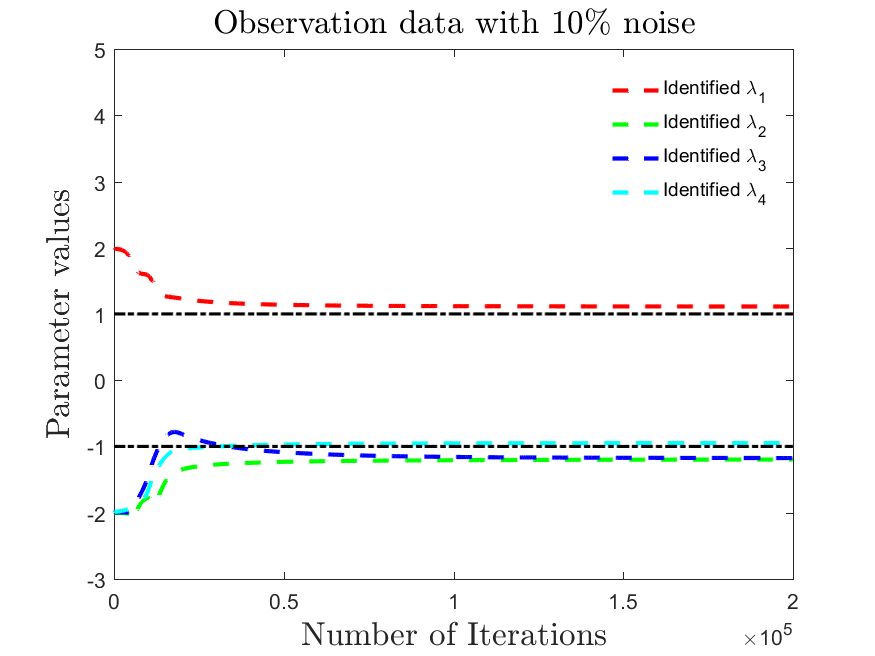}}
\end{minipage}
\hfill
\begin{minipage}[]{0.2 \textwidth}
 \leftline{~~~~~~~\tiny\textbf{(b1)}}
\centerline{\includegraphics[width=3.4cm,height=3.4cm]{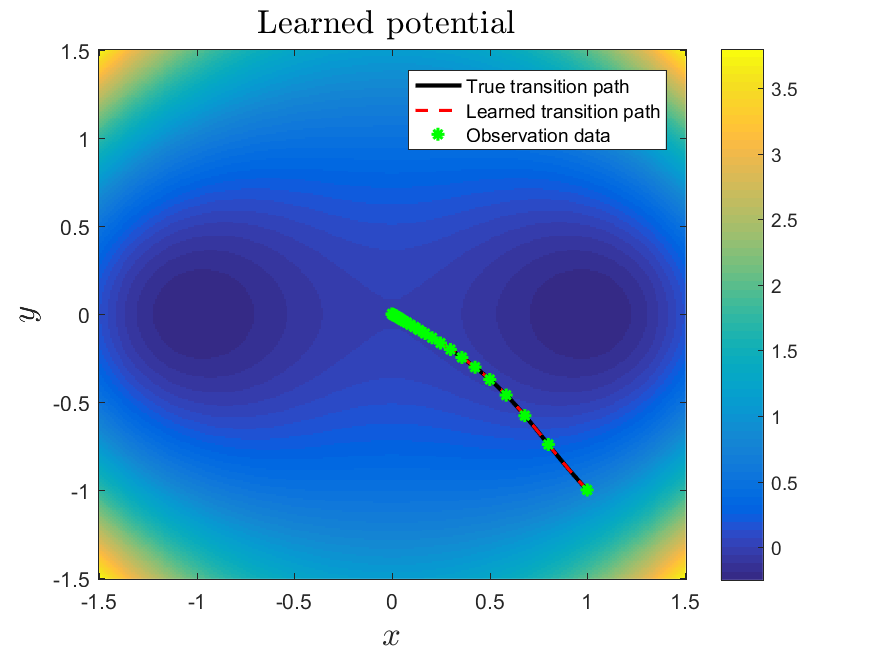}}
\end{minipage}
\hfill
\begin{minipage}[]{0.2 \textwidth}
 \leftline{~~~~~~~\tiny\textbf{(b2)}}
\centerline{\includegraphics[width=3.4cm,height=3.4cm]{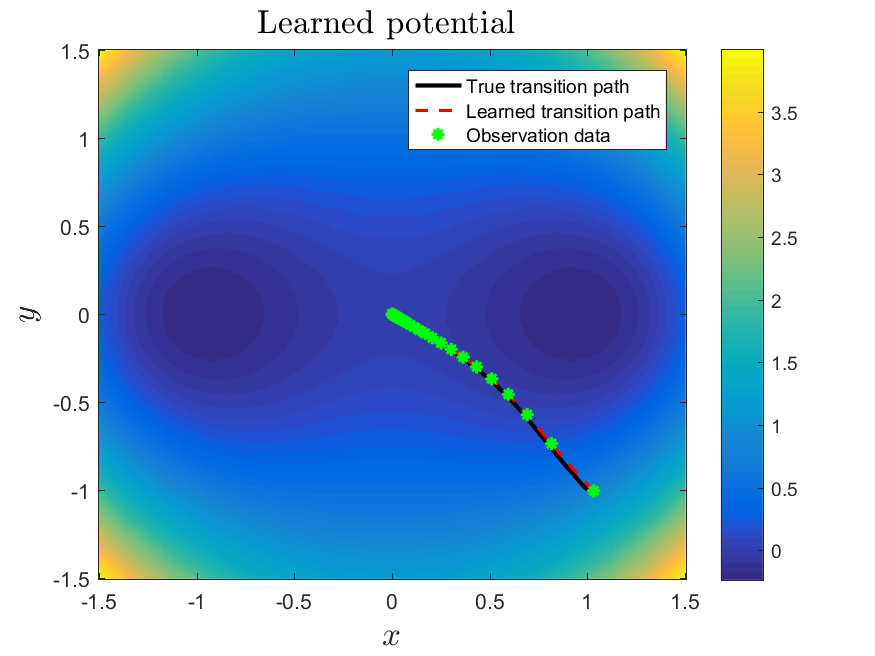}}
\end{minipage}
\hfill
\begin{minipage}[]{0.2 \textwidth}
 \leftline{~~~~~~~\tiny\textbf{(b3)}}
\centerline{\includegraphics[width=3.4cm,height=3.4cm]{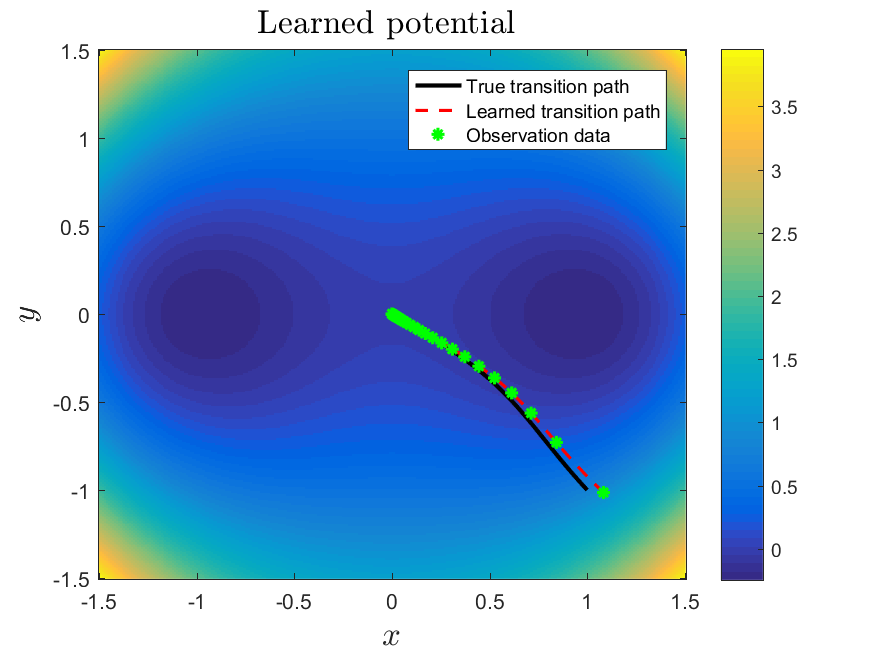}}
\end{minipage}
\hfill
\begin{minipage}[]{0.2 \textwidth}
 \leftline{~~~~~~~\tiny\textbf{(b4)}}
\centerline{\includegraphics[width=3.4cm,height=3.4cm]{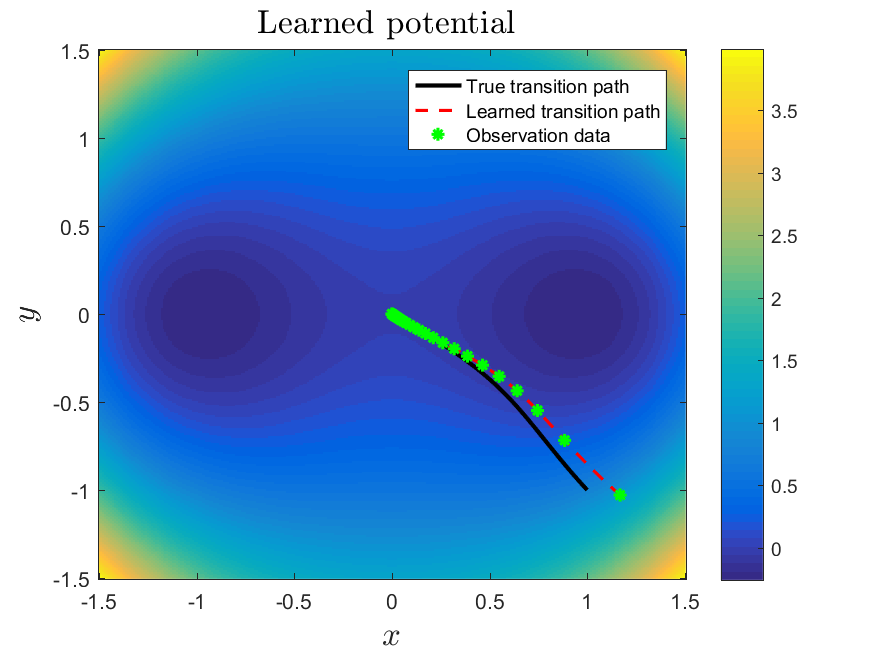}}
\end{minipage}
\caption{\textbf{Parametric estimation for transition time $T=10$ in Freidlin-Wentzell case ($\sigma \ll 1$)  - stochastic Maier-Stein model:} (a1,b1) no noise; (a2,b2) 2\% noise; (a3,b3) 5\% noise; (a4,b4) 10\% noise. Above: learned parameters; below: learned the most probable transition pathway and potential function}
\label{2D_che_FW}
\end{figure}

% \begin{figure}
% \begin{minipage}[]{0.48 \textwidth}
%  \leftline{~~~~~~~\tiny\textbf{(a)}}
% \centerline{\includegraphics[width=5cm,height=4cm]{figure/1d_5_ob.png}}
% \end{minipage}
% \hfill
% \begin{minipage}[]{0.48 \textwidth}
%  \leftline{~~~~~~~\tiny\textbf{(b)}}
% \centerline{\includegraphics[width=5cm,height=4cm]{figure/1d_5_ob_sigma.png}}
% \end{minipage}
% \caption{\textbf{ } (a) The learned most transition path with six observation data at $t=0,0.2,0.4,0.6,0.8,1$; (b) The learned diffusion term.}
% \label{1D_5}
% \end{figure}

\noindent \textbf{Onsager-Machlup framework}

In this part, we consider the Onsager-Machlup case with $T=10$ and $\sigma=0.1$. We consider the transition path from $(-1,-1)$ to $(0,0)$ as shown the green stars in Fig.\ref{2D_che_OM}, so that we can learn all the parameters in the drift function. We also consider the noisy observation data. Here the data is $u_{ob}=u_{true}(1+\eta \mathbb{N}(0,1))$, where $\eta=2\%,5\%, 10\%$ and $\mathbb{N}(0,1)$ is normal distribution. The results are shown in Fig.\ref{2D_che_OM}. We can also learn all the parameters in the Onsager-Machlup framework.

\begin{figure}
\begin{minipage}[]{0.2 \textwidth}
 \leftline{~~~~~~~\tiny\textbf{(a1)}}
\centerline{\includegraphics[width=3.4cm,height=3.4cm]{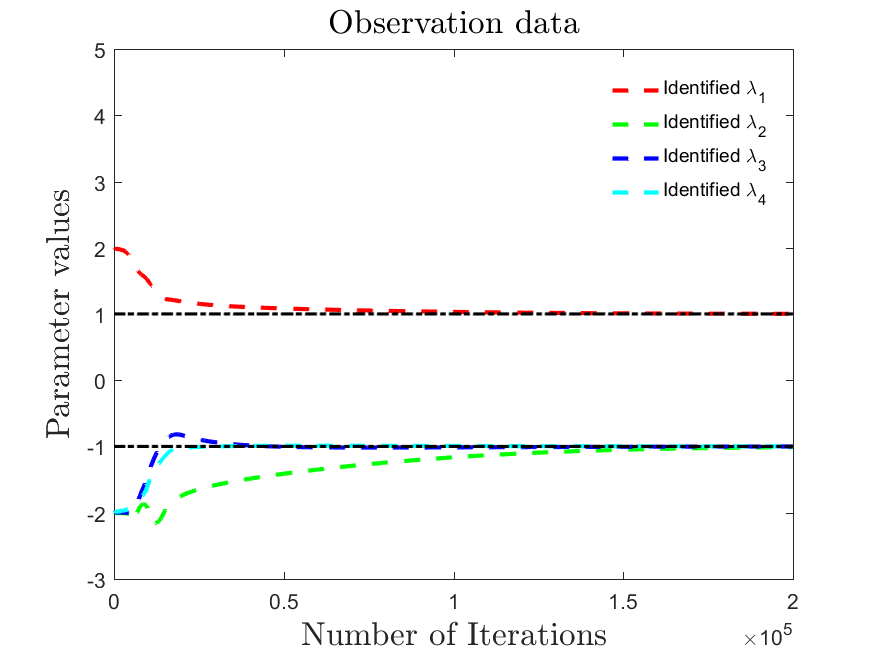}}
\end{minipage}
\hfill
\begin{minipage}[]{0.2 \textwidth}
 \leftline{~~~~~~~\tiny\textbf{(a2)}}
\centerline{\includegraphics[width=3.4cm,height=3.4cm]{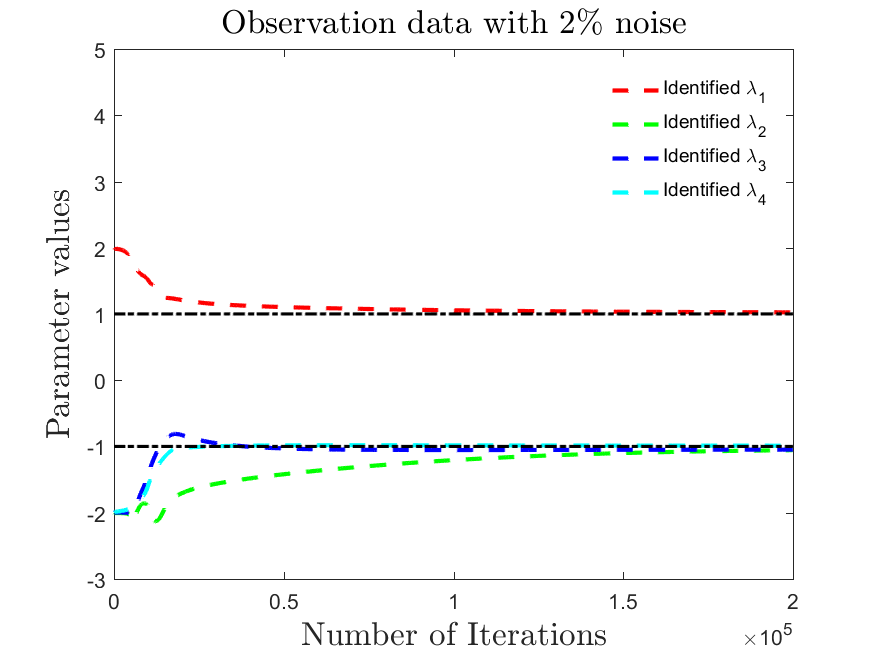}}
\end{minipage}
\hfill
\begin{minipage}[]{0.2 \textwidth}
 \leftline{~~~~~~~\tiny\textbf{(a3)}}
\centerline{\includegraphics[width=3.4cm,height=3.4cm]{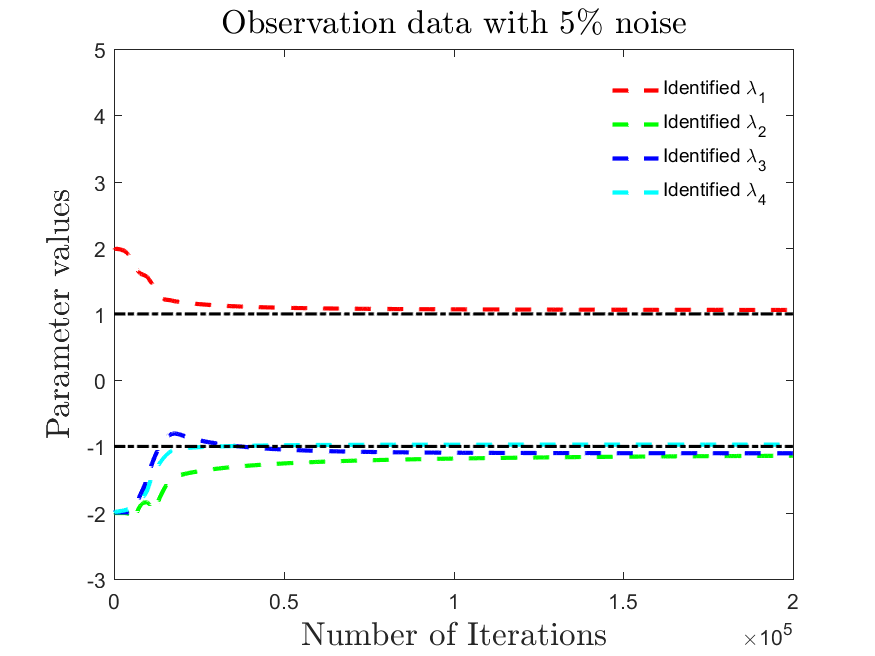}}
\end{minipage}
\hfill
\begin{minipage}[]{0.2 \textwidth}
 \leftline{~~~~~~~\tiny\textbf{(a4)}}
\centerline{\includegraphics[width=3.4cm,height=3.4cm]{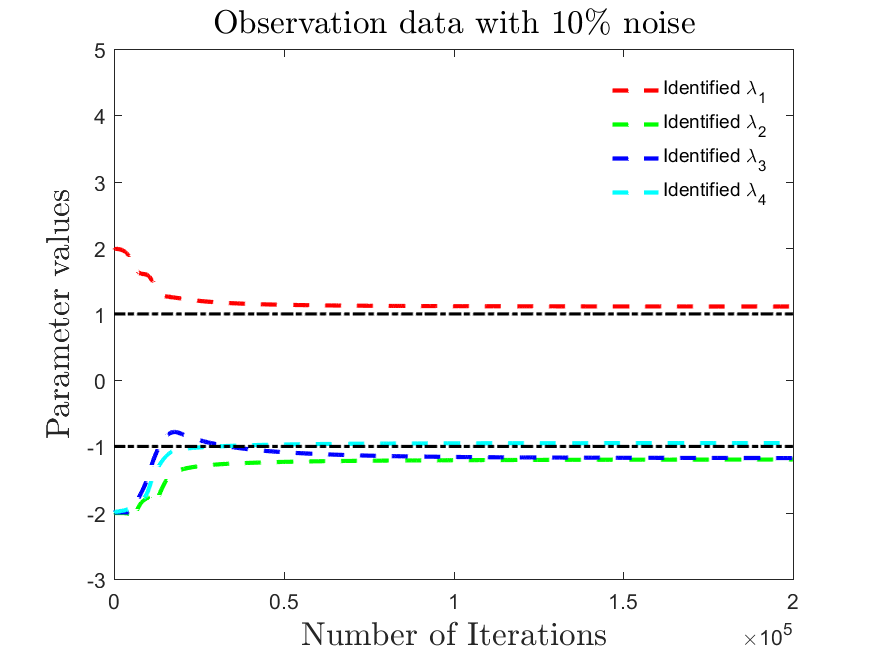}}
\end{minipage}
\hfill
\begin{minipage}[]{0.2 \textwidth}
 \leftline{~~~~~~~\tiny\textbf{(b1)}}
\centerline{\includegraphics[width=3.4cm,height=3.4cm]{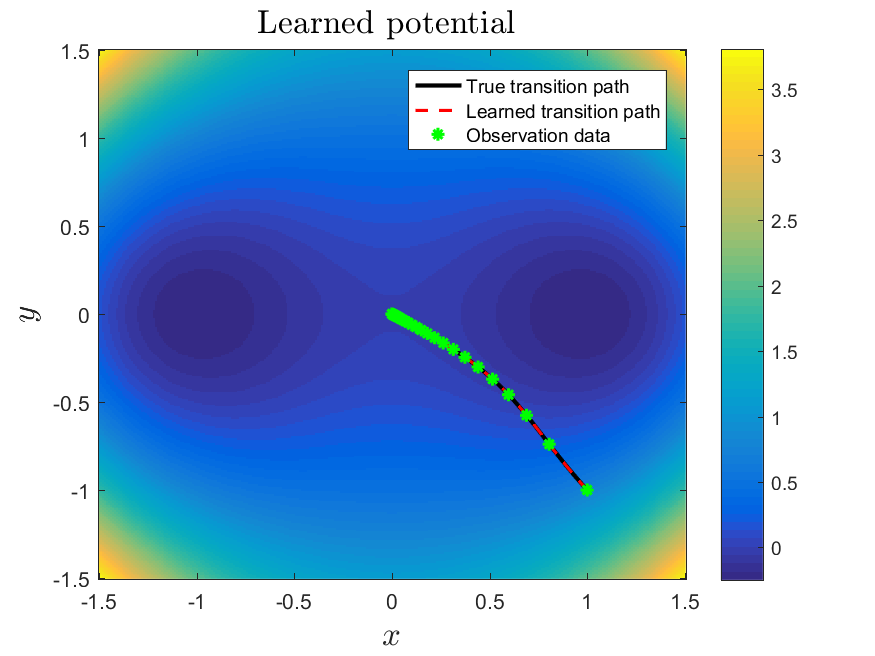}}
\end{minipage}
\hfill
\begin{minipage}[]{0.2 \textwidth}
 \leftline{~~~~~~~\tiny\textbf{(b2)}}
\centerline{\includegraphics[width=3.4cm,height=3.4cm]{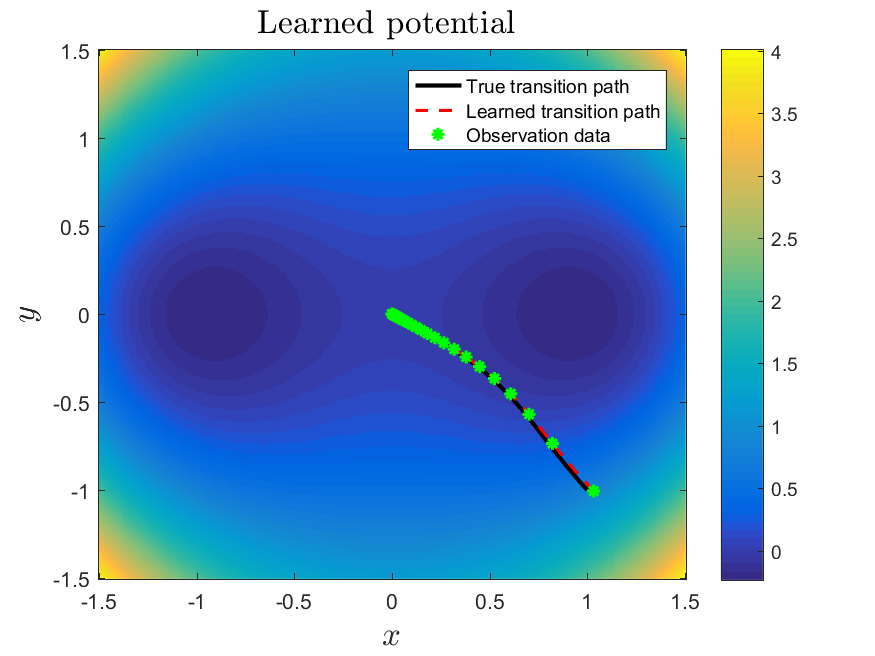}}
\end{minipage}
\hfill
\begin{minipage}[]{0.2 \textwidth}
 \leftline{~~~~~~~\tiny\textbf{(b3)}}
\centerline{\includegraphics[width=3.4cm,height=3.4cm]{2d_che/OM/2d_che_inverse_ob51_potential_num_2per_OM.png}}
\end{minipage}
\hfill
\begin{minipage}[]{0.2 \textwidth}
 \leftline{~~~~~~~\tiny\textbf{(b4)}}
\centerline{\includegraphics[width=3.4cm,height=3.4cm]{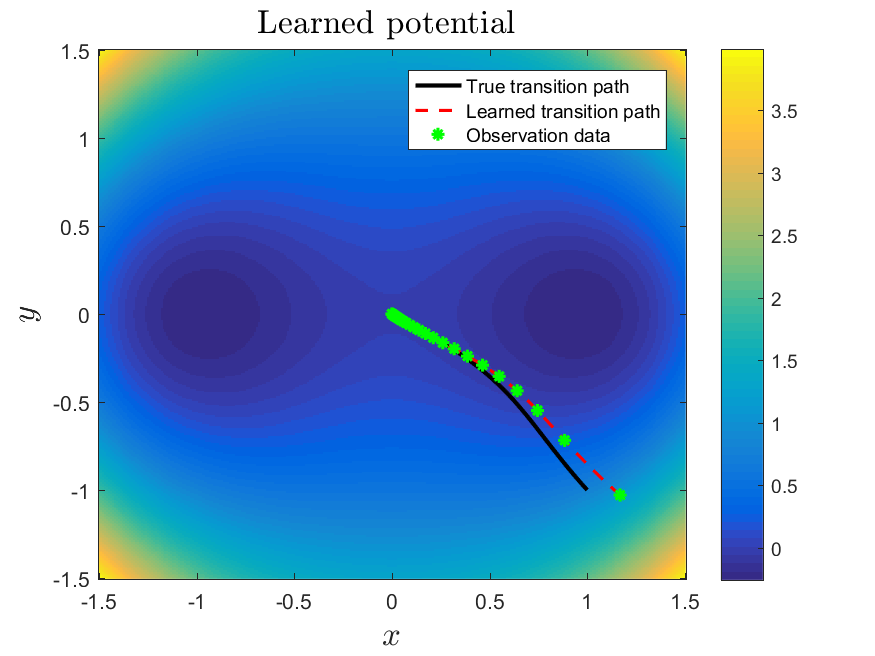}}
\end{minipage}
\caption{\textbf{Parametric estimation for transition time $T=10$ in Onsager-Machlup case ($\sigma =0.1$)  - stochastic Maier-Stein model:} (a1,b1) no noise; (a2,b2) 2\% noise; (a3,b3) 5\% noise; (a4,b4) 10\% noise. Above: learned parameters; below: learned the most probable transition pathway and potential function}
\label{2D_che_OM}
\end{figure}

\section{Conclusion}

We have investigated the transition phenomena of stochastic differential equations. The Onsager-Machlup action functional and Freidlin-Wentzell action functional provide a good framework to quantify the probability of the solution trajectories in a path tube, which offers information about transitions. We use physics-informed neural networks to calculate the most probable transition pathway via the Euler-Lagrange equation in both the Onsager-Machlup and Freidlin-Wentzell frameworks. A convergence result of the physics-informed neural networks for the Euler-Lagrange equation is proved. More precisely, we bound the expected loss in terms of the empirical loss and show the convergence result of the empirical loss.

In order to identify the stochastic differential equation from the observation data, we use a neural network to extract the drift function. The observation data is simulated by the Markovian bridge process, which corresponds to the stochastic differential equation. For a comparison, we also use the most probable transition pathway, computed by the neural network, as the observation data. The numerical experiments show our method could recover the stochastic differential equation well.

% However, an open problem is the convergence analysis for the inverse problem. 
% Although our numerical results show the neural network could recover the stochastic differential equation, the theoretical convergence analysis for the inverse problem is needed urgently.
% However, non-Gaussian random phenomena including heavy-tailed distributions have been found to be suitable in  modeling  biological  evolution, climate  change  and  other  complex  scientific and engineering systems. We are looking forward to learning the stochastic differential equation driven by L\'{e}vy noise.

\appendix
\section{Detailed proof of the theorems}
In the following, we present the detailed proofs of Theorem 3.1 and Theorem 3.2. We also investigate the most probable transition pathway and the inverse problem of the stochastic double-well system.

% \section{Notations of function spaces}
% We recall the notations for the function spaces. Denote $U=(0,T)$. For a positive integer $k$, let $C^{k}(U)$ be the set of vector-valued functions whose derivatives of order $\leq k$ are continuous in $U$. Denote $C^k(\overline{U})$ be the set of functions in $C^k(U)$ whose derivatives of order $\leq k$ have continuous extensions to $U$. A function $f$ is H\"{o}lder continuous with exponent $\alpha\ (0<\alpha\leq 1)$ in $U$, if 
% \begin{equation}
% [f]_{\alpha ; U}=\sup _{x, y \in U, x \neq y} \frac{\|f(x)-f(y)\|}{|x-y|^{\alpha}}<\infty,
% \end{equation}
% where $\|\cdot\|$ is the Euclidean norm in $\mathbb{R}^d$. Note that if $\alpha=1$, $f$ is a Lipschitz vector-valued function.

% For positive integer $k$ and exponent $\alpha$ with $0<\alpha\leq 1$, we define the H\"{o}lder space
% \begin{equation}
% C^{k,\alpha}(\overline{U})=\{f\in C^{k}(\overline{U})|f^{(k)} \text{ is  H\"{o}lder continuous with exponent $\alpha$}\},
% \end{equation}
% where $f^{(k)}$ is the derivative of order $k$ for $f$. Equipping with the norm 
% \begin{equation}
% \|f\|_{k,\alpha}=\sum\limits_{j=1}^k\sup\limits_{x\in U}\|f^{(j)}(x)\|+[f^{(k)}]_{\alpha ; U},
% \end{equation}
% the H\"{o}der space $C^{k,\alpha}(\overline{U})$ is a Banach space. Denote $[f]_{k,\alpha ; U}=[f^{(k)}]_{\alpha ; U}$ and $[f]_{0,\alpha ; U}=[f]_{\alpha ; U}$.

\subsection{Proof of Theorem 3.1}
\begin{lem}
Suppose that m is large enough satisfying, for any $t\in U=(0,T)$, there exists $t'\in U$, such that $|t-t'|<\delta$. Then, we obtain
\begin{equation}
\operatorname{Loss}^{\operatorname{PINN}}(h;\lambda)\leq Cm\delta \cdot\operatorname{Loss}_m^{\operatorname{PINN}}\left(h;\lambda\right) + 3(C_L^2+1)\delta^{2\alpha} R(h),
\end{equation}
where $R(h)=[h]_{0,\alpha;U}+[h]_{1,\alpha;U}+[h]_{2,\alpha;U}$.
\end{lem}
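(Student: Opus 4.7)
The plan is to localize the $L^2$ residual integral using the partition from Assumption~1(2) and then, on each partition element, compare the pointwise residual to one at a nearby empirical sample, controlling the comparison error via the H\"older regularity of $h,\dot h,\ddot h$ together with the Lipschitz property of $g$. Writing the residual as $r(t):=\ddot h(t)-g(\dot h(t),h(t))$ and noting that the boundary terms in $\operatorname{Loss}^{\operatorname{PINN}}$ and $\operatorname{Loss}_m^{\operatorname{PINN}}$ coincide, the real task reduces to bounding $\int_U\|r(t)\|^2\,d\mu(t)$ by $\tfrac{1}{m}\sum_k\|r(t_k)\|^2$ plus a H\"older remainder of order $\delta^{2\alpha}$.

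First I would fix a partition $\{U_j^\delta\}_{j=1}^{K_\delta}$ from Assumption~1(2) with $U_j^\delta\subset H^\delta(t_j)$, and use the $\delta$-density of $\mathcal{T}^m$ to assign to each $j$ a sample $t_{\ell(j)}\in\mathcal{T}^m$ with $|t-t_{\ell(j)}|\le c\delta$ for every $t\in U_j^\delta$. Then split each local integral via $(a+b)^2\le 2a^2+2b^2$:
\begin{equation*}
\int_{U_j^\delta}\|r(t)\|^2\,d\mu \;\le\; 2\int_{U_j^\delta}\|r(t)-r(t_{\ell(j)})\|^2\,d\mu \;+\; 2\mu(U_j^\delta)\,\|r(t_{\ell(j)})\|^2.
\end{equation*}
For the first piece, the decomposition $r(t)-r(t_{\ell(j)})=[\ddot h(t)-\ddot h(t_{\ell(j)})]-[g(\dot h(t),h(t))-g(\dot h(t_{\ell(j)}),h(t_{\ell(j)}))]$ together with the Lipschitz property of $g$ in $(z,\dot z)$ and the H\"older continuity of $\ddot h,\dot h,h$ yields
\begin{equation*}
\|r(t)-r(t_{\ell(j)})\|\le\bigl([h]_{2,\alpha;U}+C_L[h]_{1,\alpha;U}+C_L[h]_{0,\alpha;U}\bigr)|t-t_{\ell(j)}|^{\alpha},
\end{equation*}
so squaring and applying $(a+b+c)^2\le 3(a^2+b^2+c^2)\le 3(C_L^2+1)R(h)^2$ produces a bound of the form $3(C_L^2+1)\delta^{2\alpha}R(h)\cdot\mu(U_j^\delta)$ after absorbing one factor of $R(h)$ into the supremum constant.

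For the second piece, Assumption~1(3) gives $\mu(U_j^\delta)\le C\delta$; choosing the map $j\mapsto\ell(j)$ to be injective (possible because $m\gg K_\delta=O(1/\delta)$ for $m$ large) yields
\begin{equation*}
\sum_{j=1}^{K_\delta}\mu(U_j^\delta)\,\|r(t_{\ell(j)})\|^2\;\le\; C\delta\sum_{k=1}^m\|r(t_k)\|^2 \;=\; Cm\delta\cdot\frac{1}{m}\sum_{k=1}^m\|r(t_k)\|^2.
\end{equation*}
Summing the two pieces over $j$, multiplying by $\lambda_r$, and then adding the identical boundary contributions on both sides of the inequality (absorbing them into the $Cm\delta\cdot\operatorname{Loss}_m^{\operatorname{PINN}}$ factor, which is legitimate once $Cm\delta\ge 1$) delivers the stated bound. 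The main obstacle is the combinatorial step of making $j\mapsto\ell(j)$ injective so that empirical residuals are not double-counted: this is precisely what the ``$m$ large enough'' hypothesis buys, and in the companion Theorem~3.1 it is upgraded to a probabilistic coupon-collector statement of confidence $1-\sqrt m(1-1/\sqrt m)^m$. The only remaining bookkeeping is tracking the constant absorbed into $|t-t_{\ell(j)}|\le c\delta$ and verifying that the boundary terms, being common to both losses, can indeed be safely carried through.
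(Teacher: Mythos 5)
Your overall architecture (localize the $L^2$ residual, compare each point to a nearby sample via the H\"older seminorms of $h,\dot h,\ddot h$ and the Lipschitz constant of $g$, bound the cell measures by $C\delta$, carry the boundary terms through once $Cm\delta\ge 1$) is the right one and matches the paper in spirit. But there is a genuine gap at the step you yourself flag as the main obstacle: the injective assignment $j\mapsto\ell(j)$. The justification ``possible because $m\gg K_\delta$'' does not work. The hypothesis only gives $\delta$-density of $\mathcal{T}^m$, and $\delta$-density is compatible with the samples being locally sparse relative to your partition: a single sample sitting at the common endpoint of two adjacent cells $U_j^\delta$, $U_{j+1}^\delta$ can be the \emph{only} sample within range of both, so Hall's condition for a system of distinct representatives fails and no injective assignment respecting the distance constraint exists, no matter how large $m$ is globally. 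Without injectivity your sum $\sum_j\mu(U_j^\delta)\|r(t_{\ell(j)})\|^2$ double-counts empirical residuals and the claimed bound by $C\delta\sum_k\|r(t_k)\|^2$ fails by the multiplicity factor. The step is repairable --- each sample can serve at most $O(C/c)$ partition cells because the cells are disjoint with $\mu(U_j^\delta)\ge c\delta$ and all eligible cells lie in an interval of length $O(\delta)$ around the sample --- but that costs an extra constant and you did not supply it.

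The paper avoids the issue entirely by choosing a different decomposition: instead of the abstract partition from Assumption~1(2), it partitions $U$ into the Voronoi cells $A_{t_j}$ of the sample points themselves. Then cells and samples are in bijection by construction, $\sum_j\mu(A_{t_j})=1$, and the $\delta$-density forces each $A_{t_j}$ into a $\delta$-ball so that $\mu(A_{t_j})\le C\delta$ by Assumption~1(3); the empirical sum appears with each $t_j$ counted exactly once. I recommend switching to that decomposition. Two smaller points: your $(a+b)^2\le 2a^2+2b^2$ split produces $2Cm\delta$ and $6(C_L^2+1)\delta^{2\alpha}$ rather than the stated constants (the paper's own pointwise inequality $\|r(t)\|^2\le\|r(t)-r(t')\|^2+\|r(t')\|^2$ is the step that suppresses this factor, and is itself only valid up to such a constant), and your bound $\|r(t)-r(t')\|\le(\cdot)|t-t'|^\alpha$ followed by ``absorbing one factor of $R(h)$ into the supremum constant'' silently replaces $R(h)^2$ by $R(h)$, which is only harmless because $R(h)$ appears linearly in the target; state that you are using $\bigl([h]_{2,\alpha}+C_L[h]_{1,\alpha}+C_L[h]_{0,\alpha}\bigr)^2\le 3(C_L^2+1)\bigl([h]_{0,\alpha}^2+[h]_{1,\alpha}^2+[h]_{2,\alpha}^2\bigr)$ and reconcile the square with the definition of $R(h)$ explicitly.
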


\begin{proof}
By Assumption 1, for any $t\in U$, there exists $t'\in U$, such that $|t-t'|<\delta$, we then obtain
\begin{equation}
\begin{split}
&\ \ \ \ \|[\ddot{h}-g(\dot{h},h)](t)\|^2\\
&\leq  \|[\ddot{h}-g(\dot{h},h)](t)-[\ddot{h}-g(\dot{h},h)](t')\|^2+\|[\ddot{h}-g(\dot{h},h)](t')\|^2\\
&\leq  3\left(\|\ddot{h}(t)-\ddot{h}(t')\|^2+C_L^2\|\dot{h}(t)-\dot{h}(t')\|^2+C_L^2\|h(t)-h(t')\|^2\right)+\|[\ddot{h}-g(\dot{h},h)](t')\|^2\\
&\leq 3(C_L^2+1)\delta^{2\alpha}R(h) + \|[\ddot{h}-g(\dot{h},h)](t')\|^2.
\end{split}
\end{equation}
where $R(h)=[h]_{0,\alpha;U}+[h]_{1,\alpha;U}+[h]_{2,\alpha;U}$. For $t_j\in\mathcal{T}^m$, let $A_{t_j}$ be the Voronoi cell associated with $t_j$, i.e.,
\begin{equation}
A_{t_{j}}=\left\{t \in U \mid |t-t_{j}|=\min _{t^{\prime} \in \mathcal{T}^{m}}|t-t^{\prime}|\right\}.
\end{equation}
Let $\gamma_j=\mu(A_{t_j})$. Thus, $\sum_{j=1}^m\gamma_j=1$. Let $\gamma^{*}=\operatorname{max}_j\gamma_j$. Then, we obtain
\begin{equation}
\begin{split}
\|\ddot{h}-g(\dot{h},h)\|_{L^2(U;\mu)}^2&\leq \sum\limits_{j=1}^m\gamma_j\|[\ddot{h}-g(\dot{h},h)](t_j)\|^2+3(C_L^2+1)\delta^{2\alpha} R(h)\\
&\leq \gamma^{*}\sum\limits_{j=1}^m\|[\ddot{h}-g(\dot{h},h)](t_j)\|^2+3(C_L^2+1)\delta^{2\alpha} R(h).
\end{split}
\end{equation}
Let $P^{*}=\operatorname{max}_{t\in U}(B_{\delta}(t)\cap U)$, where $B_{\delta}(t)$ is a closed interval centered at $t$ with radius $\delta$. Since for each $t\in U$, there exists $x'\in \mathcal{T}^m$, such that $|t-t'|\leq\delta$, by the definition of Voronoi cell $A_{t_j}$, for each $j$, the cell $A_{t_j}$ is included in some closed ball $B_{\delta}$. Hence, it follows from Assumptions,
\begin{equation}
\begin{split}
\gamma^{*}\leq P^{*}\leq C\delta.
\end{split}
\end{equation}
Therefore, we have 
\begin{equation}
\begin{split}
&\ \ \ \ \operatorname{Loss}^{\operatorname{PINN}}\left(h;\lambda\right)\\
&=\lambda_r\|[\ddot{h}-g(\dot{h},h)\|_{L^2(U;\mu)}^2+ \frac{\lambda_b}{2}(\|h(0)-x_0\|^2+\|h(T)-x_T\|^2)\\
&\leq \gamma^{*}\lambda_r\sum\limits_{j=1}^m\|[\ddot{h}\!-\!g(\dot{h},h)](t_j)\|^2\!+\!3(C_L^2\!+\!1)\delta^{2\alpha} R(h)\!+\!\frac{\lambda_b}{2}(\|h(0)\!-\!x_0\|^2\!+\!\|h(T)\!-\!x_T\|^2)\\
&\leq Cm\delta \cdot\operatorname{Loss}_m^{\operatorname{PINN}}\left(h;\lambda\right) + 3(C_L^2+1)\delta^{2\alpha} R(h),
\end{split}
\end{equation}
which is the desired result of Lemma A.1.
\end{proof}

\noindent \text{\emph{Proof of Theorem 3.1.}} Let $\mathcal{T}^m$ be independently  and  identically  distributed  samples from probability distribution $\mu$ on $U=(0,T)$. The Assumption 1 yields the probabilistic space filling arguments. Thus by Lemma B.2 in \cite{shin2020convergence}, with probability $(1-\sqrt{m}(1-1/\sqrt{m})^m)$ at least, for each $t\in U$, there exists $t'\in\mathcal{T}^m$, such that $|t-t'|\leq c^{-1}m^{-\frac{1}{2}}$. 

Let $\delta=c^{-1}m^{-\frac{1}{2}}$ in Lemma A.1. We obtain, with probability $(1-\sqrt{m}(1-1/\sqrt{m})^m)$ at least,
\begin{equation}
\begin{split}
\operatorname{Loss}^{\operatorname{PINN}}\left(h;\lambda\right)\leq \frac{C}{c}m^{\frac{1}{2}} \cdot\operatorname{Loss}_m^{\operatorname{PINN}}\left(h;\lambda\right) + \frac{3(C_L^2+1)}{c^{2\alpha}}\cdot\frac{1}{m^{\alpha}} R(h).
\end{split}
\end{equation}
Recalling that $R(h)=[h]_{0,\alpha;U}+[h]_{1,\alpha;U}+[h]_{2,\alpha;U}$, we thus have
\begin{equation}
	\begin{split}
		\operatorname{Loss}^{\operatorname{PINN}}\left(h;\lambda\right)\leq \frac{C}{c}m^{\frac{1}{2}} \cdot\left(\operatorname{Loss}_m^{\operatorname{PINN}}\left(h;\lambda\right) + \lambda_m^R R(h)\right),
	\end{split}
\end{equation}
where $\lambda_m^R=\frac{3(C_L^2+1)}{C\cdot c^{{2\alpha-1}}}\cdot m^{-\alpha-\frac{1}{2}}$. The proof of Theorem 3.1 is complete.

\subsection{Proof of Theorem 3.2}
\noindent \text{\emph{Proof of Theorem 3.2.}} Since $h_m$ be a minimizer of the H\"{o}lder regularized loss $\operatorname{Loss}_m^{\operatorname{PINN}}\left(h_m;\lambda,\lambda^R\right)$ and $h_m, z_m^{*}\in \mathcal{H}_m$, by Assumption 2, we obtain
\begin{equation}
\begin{split}
\operatorname{Loss}_m^{\operatorname{PINN}}\left(h_m;\lambda,\lambda_m^R\right)&\leq \operatorname{Loss}_m^{\operatorname{PINN}}\left(z_m^{*};\lambda,\lambda_m^R\right)\\
&\leq \operatorname{Loss}_m^{\operatorname{PINN}}\left(z_m^{*};\lambda\right) + \lambda_m^R \cdot R^{*}.
\end{split}
\end{equation}
On the other hand,
\begin{equation}
\begin{split}
\operatorname{Loss}_m^{\operatorname{PINN}}\left(h_m;\lambda,\lambda_m^R\right)\geq \lambda_m^R R(h_m).
\end{split}
\end{equation}
Note that $\operatorname{Loss}_m^{\operatorname{PINN}}(z_m^{*};\lambda)=\mathcal{O}(m^{-\alpha-\frac{1}{2}})$ and $\lambda_m^R=\mathcal{O}(m^{-\alpha-\frac{1}{2}})$. Thus, combining (A.8) and (A.9), we have $\operatorname{Loss}_m^{\operatorname{PINN}}\left(h_m;\lambda,\lambda_m^R\right)=\mathcal{O}(m^{-\alpha-\frac{1}{2}})$. 

Then, by Theorem 3.1, we have that with probability $1-\sqrt{m}(1-1/\sqrt{m})^m$,
\begin{equation}
\begin{split}
\operatorname{Loss}^{\operatorname{PINN}}\left(h_m;\lambda\right)=\mathcal{O}(m^{-\alpha}), 
\end{split}
\end{equation} 
which is the deserved result.
\section{Calculus of Euler-Lagrange equations}
We will derive Euler-Lagrange equations \eqref{el_om} and \eqref{el_fw} of the Onsager-Machlup action functional and the Freidlin-Wentzell action functional, respectively. We set the action functional as 
\begin{equation}
\begin{split}\label{appact}
S(z,\dot{z})=\frac{1}{2}\int_0^TL(z,\dot{z})dt,
\end{split}
\end{equation}
where the Lagrangian $L(\cdot,\cdot)$ is the integrand of the Onsager-Machlup action functional \eqref{om} or the Freidlin-Wentzell action functional \eqref{fw}. The Euler-Lagrage equation of the action functional is
\begin{equation}
\begin{split}
\frac{d}{dt}\frac{\partial}{\partial \dot{z}}L(z,\dot{z})=\frac{\partial}{\partial z}L(z,\dot{z}).
\end{split}
\end{equation}

\noindent {\bf Onsager-Machlup Case}

Denote $A=(\sigma\sigma^T)^{-1}=(a_{ij})_{d\times d}$. In the Onsager-Machlup case, the Lagragian is 
\begin{equation}
\begin{split}
L^{OM}(z,\dot{z})&=\frac{1}{2}\left[\dot{z}-f(z)\right][\varepsilon^2\sigma\sigma^T]^{-1}[\dot{z}-f(z)]+\frac{1}{2}\nabla \cdot f(z)\\
&=\frac{1}{2\varepsilon^2}(\dot{z}^i-f^i(z))a_{ij}(\dot{z}^j-f^j(z))+\frac{1}{2} \partial_jf^j(z).
\end{split}
\end{equation}
Here, we use the Einstein sum. Firstly, taking the partial differential for $\dot{z}_k$, we obtain
\begin{equation}
\begin{split}
\frac{\partial L^{OM}(z,\dot{z})}{\partial \dot{z}_k}&=\frac{1}{2\varepsilon^2}a_{ij}(\dot{z}^i-f^i(z))\delta^{jk}+\frac{1}{2\varepsilon^2}a_{ij}(\dot{z}^j-f^j(z))\delta^{ik}\\
&=\frac{1}{2\varepsilon^2}(a_{kj}+a_{jk})(\dot{z}^j-f^j(z)).
\end{split}
\end{equation}
Thus, taking the time derivative, we have
\begin{equation}
\begin{split}\label{om1}
\frac{d}{dt}\frac{\partial L^{OM}(z,\dot{z})}{\partial \dot{z}_k}=\frac{1}{2\varepsilon^2}(a_{kj}+a_{jk})(\ddot{z}^j-\partial_if^j(z)\dot{z}^i).
\end{split}
\end{equation}
Then, we take the partial differential for $z_k$.
\begin{equation}
\begin{split}\label{om2}
\frac{\partial L^{OM}(z,\dot{z})}{\partial z_k}&=-\frac{1}{2\varepsilon^2}\partial_kf^i(z)(a_{ij}+a_{ji})(\dot{z}^j-f^j(z))+\frac{1}{2}\partial_k\partial_jf^j(z).
\end{split}
\end{equation}
Combining equations \eqref{om1} and \eqref{om2}, the Euler-Lagrage equation yields 
\begin{equation}
\begin{split}
\frac{1}{2\varepsilon^2}(a_{kj}+a_{jk})(\ddot{z}^j-\partial_if^j(z)\dot{z}^i)=-\frac{1}{2\varepsilon^2}\partial_kf^i(z)(a_{ij}+a_{ji})(\dot{z}^j-f^j(z))+\frac{1}{2}\partial_k\partial_jf^j(z).
\end{split}
\end{equation}
If the diffusion is a diagonal constant matrix, i.e., $\sigma=\operatorname{diag}\{\sigma_{11},...,\sigma_{dd}\}$, then $A=\operatorname{diag}\{\frac{1}{\sigma_{11}^2},...,\frac{1}{\sigma_{dd}^2}\}$. Therefore, the Euler-Lagrange equation corresponding to the Onsager-Machlup action functional \eqref{om} reduces to
\begin{equation}
\begin{split}
\ddot{z}_k=\dot{z}^j[\partial_jf^k(z)-\frac{\sigma_{kk}^2}{\sigma_{jj}^2}\partial_kf^j(z)]+\frac{\sigma_{kk}^2}{\sigma_{jj}^2}f^j(z)\partial_kf^j(z)+\frac{\varepsilon^2\sigma_{kk}^2}{2}\partial_k\partial_jf^j(z),
\end{split}
\end{equation}
for $k=1,...,d$. This is the deserved Euler-Lagrange equation as in equation \eqref{el_om}.

\noindent {\bf Freidlin-Wentzell Case}

In contrast to the Onsager-Machlup case, the Lagrangian misses the divergence part $\nabla \cdot f(z)$ in the Freidlin-Wentzell case. But the derivation is similar. In addition, if $\sigma$ is an identity matrix, the Euler-Lagrange equation corresponding to the Freidlin-Wentzell action functional \eqref{fw} reduces to
\begin{equation}
\begin{split}
\ddot{z}_k=\dot{z}^j[\partial_jf^k(z)-\partial_kf^j(z)]+f^j(z)\partial_kf^j(z),
\end{split}
\end{equation}
for $k=1,...,d$. This is the deserved Euler-Lagrange equation as in equation \eqref{el_fw}.
% \section{Calculus of Euler-Lagrange equations}
% We will derive Euler-Lagrange equations of the Onsager-Machlup action functional and the Freidlin-Wentzell action functional, respectively. We set the action functional as 
% \begin{equation}
% \begin{split}\label{appact}
% S(z,\dot{z})=\frac{1}{2}\int_0^TL(z,\dot{z})dt,
% \end{split}
% \end{equation}
% where the Lagrangian $L(\cdot,\cdot)$ is the integrand of the Onsager-Machlup action functional \eqref{om} or the Freidlin-Wentzell action functional \eqref{fw}. The Euler-Lagrage equation of the action functional is
% \begin{equation}
% \begin{split}
% \frac{d}{dt}\frac{\partial}{\partial \dot{z}}L(z,\dot{z})=\frac{\partial}{\partial z}L(z,\dot{z}),
% \end{split}
% \end{equation}

% \noindent {\bf Onsager-Machlup Case}

% \noindent {\bf Freidlin-Wentzell Case}

\bibliographystyle{plain}
\bibliography{references}

\end{document}